\documentclass[11pt]{article}
\usepackage[utf8]{inputenc}
\usepackage{lmodern}
\usepackage{subfiles}
\usepackage{enumitem}
	\setenumerate{label={\normalfont(\roman*)}, itemsep=0em} 
        
\usepackage{amsfonts}
\usepackage{amsthm}
\usepackage{amsmath}
\usepackage{amssymb}
\usepackage{amscd}
\usepackage{mathrsfs}
\usepackage{mathtools}
\usepackage{bm}
\usepackage{esint}

\usepackage[margin=3cm]{geometry}
\usepackage{indentfirst}
\usepackage{graphicx}
\usepackage{graphics}
\usepackage{lscape}
\usepackage{tikz-cd}
\usepackage{color}
\usepackage{pict2e}
\usepackage{epic}
\usepackage{epstopdf}
\usepackage{titlesec, titlefoot}
	\titleformat{\section}[block]{\Large\bfseries\filcenter}{\thesection}{1em}{}
\usepackage{float}
\usepackage{caption}
\usepackage{etoolbox}
\usepackage[affil-it]{authblk}
\usepackage{combelow}
\usepackage[hidelinks]{hyperref}
\hypersetup{bookmarksopen=true} 
\usepackage{hypcap}

\graphicspath{{./Pictures/}}
\allowdisplaybreaks

\usepackage{listings}
\usepackage{fancybox}

\usepackage[capitalize,noabbrev]{cleveref}

\theoremstyle{plain}

\renewcommand*\thesection{\arabic{section}}
\numberwithin{equation}{section}

\theoremstyle{plain}
\newtheorem{thm}{Theorem}[section]
\newtheorem{theorem}[thm]{Theorem}

\newtheorem{lemma}[thm]{Lemma}
\newtheorem{proposition}[thm]{Proposition}
\newtheorem{prop}[thm]{Proposition}
\newtheorem{corollary}[thm]{Corollary}

\theoremstyle{definition}

\newtheorem{remark}[thm]{Remark}

\newtheorem{assumption}[thm]{Assumption}

\expandafter\let\expandafter\oldproof\csname\string\proof\endcsname
\let\oldendproof\endproof
\renewenvironment{proof}[1][\proofname]{%
  \oldproof[\upshape \bfseries #1]%
}{\oldendproof}

\makeatletter
\def\@makechapterhead#1{%
  \vspace*{50\p@}%
  {\parindent \z@ \raggedright \normalfont
    \interlinepenalty\@M
    \Huge\bfseries  \thechapter.\quad #1\par\nobreak
    \vskip 40\p@
  }}
\makeatother

\newcommand\restr[2]{{
  \left.\kern-\nulldelimiterspace 
  #1 
  \vphantom{|} 
  \right|_{#2} 
  }}

\newcommand{\reqnomode}{\tagsleft@false}
\def\dx{\,{\rm d}x}

\def\dy{\,{\rm d}y}
\def \d{\,{\rm d}}

\def\supp{\,{\rm supp }}
\def\diam{\,{\rm diam}}

\allowdisplaybreaks
\makeatletter
\DeclareRobustCommand*{\bfseries}{%
  \not@math@alphabet\bfseries\mathbf
  \fontseries\bfdefault\selectfont
  \boldmath
}

\NewDocumentCommand{\esssup}{o}{%
  \operatorname*{ess\,sup}_{\IfValueT{#1}{#1}}}%

\makeatother

\newlength{\defbaselineskip}
\newcommand\eps\varepsilon
\def\mean#1{\mathchoice%
          {\mathop{\kern 0.2em\vrule width 0.6em height 0.69678ex depth -0.58065ex
                  \kern -0.8em \intop}\nolimits_{\kern -0.4em#1}}%
          {\mathop{\kern 0.1em\vrule width 0.5em height 0.69678ex depth -0.60387ex
                  \kern -0.6em \intop}\nolimits_{#1}}%
          {\mathop{\kern 0.1em\vrule width 0.5em height 0.69678ex
              depth -0.60387ex
                  \kern -0.6em \intop}\nolimits_{#1}}%
          {\mathop{\kern 0.1em\vrule width 0.5em height 0.69678ex depth -0.60387ex
                  \kern -0.6em \intop}\nolimits_{#1}}}

\numberwithin{equation}{section}
\allowdisplaybreaks[4]

\newcommand{\step}[1]{\medskip\noindent\textbf{Step #1. }}
\newcommand{\substep}[1]{\medskip\noindent\textit{Substep #1. }}

\newcommand\R{\mathbb{R}}

\newcommand{\ip}[2]{\langle #1,\,#2\rangle}

\def \p{\partial}
\def \e{\varepsilon}

\def \dd{\mathrm d}

\title{Global regularity and sparsity for regularised optimal transport}
\author[1]{Xiaopeng Cheng}
\author[2]{Lukas Koch}
\author[2]{Haotian Xiao}

\affil[1]{\small Max Planck Institute for Mathematics in the Sciences, 04103 Leipzig,Germany
\protect \\
  {\tt{xiaopeng.cheng@mis.mpg.de}}
  \vspace{1em} \ }

\affil[2]{\small University of Sussex, UK
\protect \\
  {\tt{lukas.koch@sussex.ac.uk, }}
  {\tt{hx75@sussex.ac.uk}}
  \vspace{1em} \ }

\usepackage{etoolbox}
\makeatletter
\patchcmd{\@adjustvertspacing}
  {\jot\baselineskip \divide\jot 4}
  {\jot=.5\baselineskip}
  {}{}
\makeatother

\makeatother

\begin{document}
\maketitle

\begin{abstract}
We study regularised optimal transport with subquadratic or entropic regularisation. Under $C^{1,\alpha}$-assumptions on the marginals and the domains of their support, we prove global gradient-Lipschitz bounds on the potentials, uniform in the regularisation parameter. Moreover, we show that, up to the boundary, the support of the conditional supports $\supp\,\pi_\e(\cdot| x)$ behaves like balls of radius $\e^\frac 2 {d(p-1)+2}$. 

Our work was carried out concurrently and independently of \cite{GSN}. We obtain the results of \cite[Theorem 1.2]{GSN} under weaker assumptions and with a different proof.
\end{abstract}

\section{Introduction}
We study properties of minimisers of the regularised optimal transport problem\
\begin{align}\tag{ROT}\label{eq:regularisedProblem}
OT_{\e,h}(\lambda,\mu):= \min_{\pi\in \Pi(\lambda,\mu)} \int_{\R^d\times \R^d}|x-y|^2 \dd \pi+\e^2 \int_{\R^d\times \R^d}h\left(\frac{\dd \pi}{\dd P}\right) \dd P,
\end{align}
where $\lambda,\mu\in \mathscr M(\R^d)$ have the same mass and $\Pi(\lambda,\mu)$ is the set of couplings between $\lambda$ and $\mu$ and $P\in \mathscr M(\R^d\times \R^d)$ a reference measure. If $\e$ is fixed, we denote $OT_h=OT_{\e,h}$. Throughout this paper, we will assume that $\lambda,\mu$ are absolutely continuous with respect to Lebesgue measure and the existence of $c>0$ such that the densities of $\lambda,\mu$, still denoted by $\lambda,\mu$, satisfy
\begin{align}\label{eq:densityBound}
    c^{-1}\leq \lambda,\mu\leq c
\end{align}
on $\supp\,\lambda$, $\supp\,\mu$, respectively. Further, we assume that $\supp\,\lambda$ and $\supp\,\mu$ are connected. We always consider the  reference measure $P=\lambda\otimes \mu$ and mainly focus on the following choice of regularisation $h$,
\begin{align}\label{eq:hp}
h_p(z)=\begin{cases}
\frac{|z|^p-1}{p-1} \quad& \text{ for } p\in (1,2]\\
z\log(z)\quad& \text{ for } p=1.
    \end{cases}
\end{align}
We write $OT_p(\lambda,\mu):=OT_{h_p}(\lambda,\mu)$ and refer the reader to Section \ref{sec:notation} for an overview of our notation.

Optimal transport is a convex minimisation problem, but not strictly convex. In contrast, if $h$ is strictly convex, the regularised problem \eqref{eq:regularisedProblem} is a strictly convex minimisation problem. This has many beneficial effects, including aiding numerical computations \cite{Cuturi2013}, improving sample complexity \cite{Genevay2019,Mena2019} and improving the stability of couplings \cite{Ghosal2022,Bayraktar2025}.

Under
\begin{assumption}\label{ass:h}
$h\colon [0,\infty)\to \R$ is continuous, twice differentiable on $(0,\infty)$, strictly convex, bounded below and superlinear at $+\infty$,
\end{assumption}
the regularised transport problem \eqref{eq:regularisedProblem} admits a dual formulation of the form
\begin{align}\label{eq:dualProblem}\tag{dual-ROT}
    \sup_{f\in L^1(\lambda),\,g\in L^1(\mu)} \int_{\R^d} f \dd \lambda + \int_{\R^d} g \dd \mu-\e^2\int_{\R^d\times \R^d} h^\ast\left(\frac{f(x)+g(y)-|x-y|^2}{\e^2}\right)\d P,
\end{align}
where $h^\ast$ is the convex conjugate of $h$ with the convention that we extend $h=+\infty$ on $(-\infty,0)$. It turns out that the supremum is assumed and that the optimal choice of potentials $(f_\e,g_\e)$ is unique up to constants. Indeed, the potentials $(f_\e,g_\e)$ will be our main object of interest, though we will phrase most of our results in terms of $\phi_\e(x) = |x|^2-f_\e(x)$ and $\psi_\e(y)=|y|^2-g_\e(y)$. The potentials are related to the optimal transport plan $\pi_\e$ solving $OT_h$ via the relation
\begin{align*}
    \frac{\dd \pi_\e}{\dd P} = (h')^{-1}\left(\frac{f_\e(x)+g_\e(y)-|x-y|^2}{\e^2}\right).
\end{align*}

If $T$ is the optimal transport map between $\lambda$ and $\mu$, then one expects the conditional supports of $\pi_\e$ shrink to a single point. Thus we are interested in the size of the conditional supports
\begin{align*}
\mathcal S_x&:=\supp\, \pi_\e(\cdot\mid x)=\mbox{closure of }\{y\mid \frac{\d\pi_\e}{\d P}(x,y)> 0\},\\
\mathcal T_y&:=\supp \pi_\e (\cdot\mid y)=\mbox{closure of }\{x\mid \frac{\d\pi_\e}{\d P}(x,y)> 0\}.
\end{align*}
The main goal of this paper is to prove the following result, containing both a global regularity result for potentials of entropic optimal transport and geometric information on their support. Let $B(z,R)\subset\R^d$  be the ball of radius $R$ centered at $z\in\R^d$.
\begin{theorem}\label{thm:main}
Let $p\in [1,2]$ and $\alpha\in (0,1]$. Let $\lambda,\mu$ be probability measures on $\R^d$ admitting $C^{0,\alpha}$-densities satisfying $0<c^{-1}\leq \lambda,\mu\leq c$ on their support for some $c>0$. Assume $\supp\,\lambda$ and $\supp\,\mu$ are $C^{1,\alpha}$-domains. Suppose the optimal transport map  $T\colon \supp\,\lambda \to \supp\,\mu$ between $\lambda$ and $\mu$ is a bi-$C^{1,\alpha}$ map.  Consider $\pi_\e\in \Pi(\lambda,\mu)$ an optimal transport plan for regularised transport with regularisation parameter $\e^2>0$ and regularisation function $h_p$ with potentials $\phi_\e$ and $\psi_\e$. There is $\e_1>0$ such that if $\e\leq \e_1$, then $\phi_\e\in W^{2,\infty}(\supp\,\lambda)$, $\psi_\e\in W^{2,\infty}(\supp\,\mu)$ and there is $C>0$, independent of $\e$, such that
\begin{align*}
C^{-1}\textup{id} \leq \nabla^2 \phi_\e \leq C \textup{id}\quad \text{ a.e. on } \supp\,\lambda,\\
C^{-1}\textup{id} \leq \nabla^2 \psi_\e \leq C \textup{id}\quad \text{ a.e. on } \supp\,\mu.
\end{align*}
If $p\in [1,2)$, then $\phi_\e\in C^2(\supp\,\lambda)$, $\psi_\e\in C^2(\supp\,\mu)$. 

Furthermore if $p\in(1,2]$,  with $R_c:= \e^\frac 2 {d(p-1)+2}$, for every\footnote{$A\simeq B$ means $A\lesssim B$ and $B\lesssim A$, see Section \ref{sec:notation}.} $x\in \supp\,\lambda$, $y\in \supp\,\mu$,
\begin{gather*}
B\left(\frac 1 2\nabla \phi_\e(x),C^{-1}R_c\right)\cap \supp\,\mu\subset \mathcal S_x\subset B\left(\frac 1 2\nabla \phi_\e(x),C R_c\right)\\
B(\frac{1}{2}\nabla \psi_\e(y),C^{-1}R_c)\cap \supp\,\lambda\subset \mathcal T_y\subset B(\frac{1}{2}\nabla \psi_\e(y),C R_c)\\
\left|B\left(\frac 1 2\nabla \phi_\e(x),C^{-1}R_c\right)\cap \supp\,\mu\right|\simeq |B(C^{-1}R_c)| \simeq \left|B\left(\frac 1 2\nabla \psi_\e(y), C^{-1}R_c\right)\cap \supp\,\lambda\right|.
\end{gather*}
\end{theorem}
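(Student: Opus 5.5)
Since the statement concerns $\phi_\e$ and $\psi_\e$ symmetrically, we argue for $\phi_\e$ and recover $\psi_\e$ by exchanging the roles of $\lambda$ and $\mu$.

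The strategy has four steps: convergence to the unregularised potentials, a quantitative description of $\mathcal S_x$, a priori Hessian bounds, and a perturbative regularity argument.

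\textbf{Step 1: uniform convergence.} We write the marginal constraint as
\begin{align*}
\int (h')^{-1}\!\left(\frac{f_\e(x)+g_\e(y)-|x-y|^2}{\e^2}\right) \d\mu(y)=1 \quad \text{for } x\in\supp\,\lambda,
\end{align*}
and analogously in $x$. From this identity, together with the density bounds \eqref{eq:densityBound} and the connectedness of the supports, we obtain uniform Lipschitz and semiconvexity bounds for $\phi_\e,\psi_\e$. Compactness and uniqueness of the unregularised Brenier potential $\phi_0$ then give $\phi_\e\to\phi_0$ uniformly after normalisation, with a quantitative rate of order $\e^{2\beta}$ for some $\beta>0$.

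\textbf{Step 2: structure of $\mathcal S_x$.} Fix $x$ and set $y_x=\frac12\nabla\phi_\e(x)$. For $p>1$ we have $(h')^{-1}(s)\simeq (s_+)^{1/(p-1)}$. The Taylor expansion of $F(x,y)=f_\e(x)+g_\e(y)-|x-y|^2$ in $y$ around $y_x$ is a concave quadratic with curvature $\simeq1$, provided $\psi_\e$ has a two-sided Hessian bound. Hence $\mathcal S_x$ is comparable to the sublevel set $\{|y-y_x|^2\lesssim \e^2 t_x\}$, where $t_x:=F(x,y_x)/\e^2$. The marginal constraint then reads
\begin{align*}
\e^{d}t_x^{\frac d2}\, t_x^{\frac1{p-1}}\simeq 1 .
\end{align*}
Here we need $|B(y_x,r)\cap\supp\,\mu|\simeq r^d$, which holds because $\supp\,\mu$ is a $C^{1,\alpha}$-domain and $T$ maps onto it. Solving gives $t_x\simeq \e^{-2d(p-1)/(d(p-1)+2)}$, so the radius is
\begin{align*}
\e\,t_x^{1/2}=\e^{\frac{2}{d(p-1)+2}}=R_c,
\end{align*}
which yields both inclusions and the volume estimate. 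This step is conditional on Hessian bounds, so we run a continuity/bootstrap argument. We assume $\frac12\le \nabla^2\psi_\e/\nabla^2\psi_0\le 2$ on scales $\ge R_c$ and deduce the ball structure. The case $p=1$ is the same computation with Gaussian tails.

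\textbf{Step 3: Hessian bounds.} We differentiate the marginal equation twice in $x$. Writing $\rho_x=\pi_\e(\cdot|x)$, this gives
\begin{align*}
\nabla^2\phi_\e(x)=2\,\mathrm{id}-\frac{4}{\e^2}\,\mathrm{Cov}_{\tilde\rho_x}(y)
\end{align*}
for a suitably weighted measure $\tilde\rho_x$ (weight $(h')^{-1\prime}$); the analogous identity holds for $\psi_\e$. Upper bounds on $\nabla^2\phi_\e$, i.e.\ lower bounds on $\nabla^2 f_\e$, follow from convexity. The two-sided bound requires showing that this covariance is $\simeq \e^2$ times a matrix comparable to $(\nabla^2\psi_\e)^{-1}$, and Step 2 supplies exactly this. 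At boundary points, where the ball is cut by $\partial\,\supp\,\mu$, the $C^{1,\alpha}$ flatness lets us compare with a half-space. A half-ball still has covariance comparable to its radius squared in every direction, so the bounds persist.

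\textbf{Step 4: closing the bootstrap.} We close the bootstrap by a Caffarelli-type perturbation argument. On scales $r\gg R_c$, $\phi_\e$ solves a Monge--Ampère-type equation $\det\nabla^2\phi_\e\approx \lambda/\mu(\nabla\phi_\e/2)$ up to an error $o(1)$. Combined with $\phi_\e\to\phi_0\in C^{2,\alpha}$ and the global $C^{1,\alpha}$ regularity of $T$, this yields $C^{1,1}$ closeness at scale $R_c$, including near the boundary. The $C^2$ claim for $p<2$ follows since the weight $(h')^{-1\prime}$ is then continuous, which makes the covariance formula continuous in $x$; for $p=2$ the weight is an indicator, giving only $W^{2,\infty}$.

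The main obstacle is Step 4 at the boundary. There we need boundary Caffarelli estimates (in the spirit of the second boundary value problem) for an approximate equation, with errors controlled uniformly down to the scale $R_c$. This is where the $C^{1,\alpha}$ hypotheses on domains and densities and the bi-$C^{1,\alpha}$ property of $T$ are used.
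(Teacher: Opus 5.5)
\textbf{There is a genuine gap: nothing in your Steps 1--4 delivers information at the critical scale $R_c$, and the argument is circular.} The gap sits exactly at the step you call the ``main obstacle''. The loop runs as follows.
Step 2 takes two-sided Hessian bounds on $\psi_\e$ on balls of radius $\sim R_c$ as input. Step 3 derives those bounds from Step 2. Step 4 is supposed to feed the loop by perturbation from $\phi_0$.

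That last step cannot work at scale $R_c$:
\begin{enumerate}
\item The deviation $\nabla\phi_\e-\nabla\phi_0$ is itself of order $R_c$; this is the sharp rate of \cite{GSN} for $p=2$. At scale $R_c$ it is therefore comparable to the quadratic part of $\phi_0$, and no $C^{1,1}$ closeness can be inferred.
\item A compactness argument as in your Step 1 gives no rate at all.
\item The approximate equation $\det\nabla^2\phi_\e\approx\lambda/\mu(\tfrac12\nabla\phi_\e)$ only makes sense once you already know the conditional supports have size $\lesssim R_c$.
\item A boundary Caffarelli theory for the second boundary value problem on non-convex $C^{1,\alpha}$ domains, with errors uniform down to $R_c$, is not available. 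Even \cite{GSN}, under $C^{3,\alpha}$ uniformly convex supports, uses Monge--Amp\`ere theory only to build affine comparison maps at scales $\gg R_c$. The critical scale is still handled through the dual structure.
\item Your continuity argument has no anchor. As $\e\to\infty$ one has $\nabla^2\phi_\e\to 0$, so the bootstrap hypothesis fails there, and continuity of such bounds in $\e$ is not clear.
\end{enumerate}

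\textbf{The paper breaks the circle with two ingredients you are missing.}

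First, it proves an a priori $L^\infty$ bound on conditional supports: $|y-x-b(x)|\lesssim R_c$ on $\supp\,\pi_\e$ after affine normalisation (Proposition \ref{prop:LinftyPartial}, Corollary \ref{cor:Linfty}). This uses no Hessian information. It comes from the monotonicity inequality \eqref{c44} and the existence of pairs of bounded density \eqref{c43}. It needs only smallness of the local energy at some scale $\gg R_c$ plus the topological condition.

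Second, it has a mechanism that delivers this smallness at scale $CR_c$ up to the boundary:
\begin{enumerate}
\item boundary harmonic approximation for quasi-minimisers (Proposition \ref{P:1});
\item a one-step improvement whose affine change of variables is corrected so the tangency condition persists (Corollary \ref{cor:oneStepRegularised});
\item a Campanato iteration down to $\max(CR_c,2\,\mathrm{dist}(x_0,\partial\supp\,\lambda))$ (Proposition \ref{prop:largeScale}), continued by the interior iteration of \cite{GvalaniKoch1}.
\end{enumerate}
The initial smallness comes from the energy rate (Proposition \ref{prop:EnergyConvergence}, Lemma \ref{lem:EnergyConvergence}) and the bi-$C^{1,\alpha}$ regularity of $T$. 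It does not come from convergence of potentials.

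At scale $CR_c$ the dual structure closes the argument using only one-sided information.
\begin{enumerate}
\item The Hessian bound follows from the Reynolds-type formula for $\nabla^2 f_\e$, the $L^\infty$ bound and the cone condition (Proposition \ref{prop:smallScale}).
\item The inner ball inclusion follows from Jensen's inequality, concavity of $\xi(x,\cdot)$ (which needs only convexity of $\psi_\e$), and $|\nabla_y\xi|\lesssim R_c$ (Proposition \ref{lem:smallScaleGeometric}).
\item Lemma \ref{lem:convexHullGeometry} handles the fact that the barycentre $\tfrac12\nabla\phi_\e(x)$ is not a maximiser of $\xi(x,\cdot)$ and may lie outside the non-convex $\supp\,\mu$.
\end{enumerate}

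\textbf{There is also a slip in Step 3.} For $p=1$ one has $\nabla^2 f_\e=2\,\mathrm{id}-\frac4{\e^2}\mathrm{Cov}$, i.e.\ $\nabla^2\phi_\e=\frac4{\e^2}\mathrm{Cov}$. So convexity gives the lower bound $\nabla^2\phi_\e\ge 0$. The upper bound is the part that needs the $L^\infty$ bound on $\mathcal S_x$.
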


\begin{remark}
If $\supp\,\lambda$ and $\supp\,\mu$ are convex, the $C^{1,\alpha}$ regularity assumption on the optimal transport map $T$ is automatically satisfied due to \cite{Caffarelli1992}  .
\end{remark}

Theorem \ref{thm:main} is a global version of the local results \cite[Theorem 3]{GvalaniKoch1} and \cite[Theorem 3.1]{GonzalezGvalaniKoch}. We refer the reader to these papers for an overview of the related literature and focus here only on very recent literature and results that are relevant to boundary regularity. In the case of quadratic regularisation $p=2$, \cite{Nguyen2026} proved lower bounds of order $\e^\frac 2 {d+2}$ on the Hausdorff distance from the regularised support to the Monge-graph. In the special case of an affine Brenier-map, they provide matching upper bounds. In the case $p=2$, the results in Theorem \ref{thm:main} was obtained under the stronger assumptions of $C^{1,\alpha}$ regularity of the marginals, and $C^{3,\alpha}$ regularity and uniform convexity of their supports, in \cite[Theorem 1.2.]{GSN}. \cite{GSN} furthermore provides sharp estimates on the error $\|\nabla \phi_\e-\nabla \phi_0\|_{L^\infty(\supp\,\lambda)}\simeq \e^\frac 2 {d+2}$, where $\phi_0$ is the Kantorovich potential associated to $W_2^2(\lambda,\mu)$. We note that we consider the cost function $|x-y|^2$ for $W_2$, see \eqref{e:W2-def}, and thus $\phi_0$ is twice the standard Brenier potential.

Both \cite{GSN} and this work build on the philosophy from \cite{GvalaniKoch,GvalaniKoch1}--- at large scales $R\gg R_c$ closeness to the optimal transport problem is exploited, while at the critical scale $R\sim R_c$ the structure of \eqref{eq:dualProblem} is exploited. In contrast to \cite{GSN}, which uses the boundary $\e$-regularity of \cite{Chen2015} to build affine comparison maps from a linearisation of the Monge-Amp\'{e}re equation near the boundary, we extend the interior argument from \cite{GvalaniKoch,GvalaniKoch1} to the boundary. In particular, we rely on the variational approach to optimal transport first introduced in \cite{GoldmanOtto}. We refer the reader to \cite{KochOtto} for an introduction to this approach. \cite{MiuraOtto} implements the variational approach to optimal transport in the boundary setting, leading to a sharp $\e$-regularity result in the style of \cite{Chen2015}. A key insight in \cite{MiuraOtto} is the necessity of a topological condition that allows to restrict to a purely local setting.

\begin{assumption}\label{ass:topological}
We say a plan $\pi\in \Pi(\lambda,\mu)$ satisfies the topological condition\footnote{$Q_R(p)$ is a cube of sidelength $2R$ centered at $p$, see Section \ref{sec:notation}.} in $Q_R(p)$, if 
\begin{align*}
(x,y)\in \supp\,\pi \cap ((Q_{R/2}(p)\cap \supp\,\lambda)\times \R^d) \quad\Rightarrow \quad y\in Q_R(p),\\
(x,y)\in \supp\,\pi \cap (\R^d\times (Q_{R/2}(p)\cap \supp\,\mu))\quad\Rightarrow \quad x\in Q_R(p).
\end{align*}
\end{assumption}

\cite{MiuraOtto} only considers minimisers of quadratic optimal transport with constant marginals and relies heavily on the Benamou-Brenier formulation of optimal transport. We update their ideas using the streamlined interior machinery of \cite{KochOtto} to obtain a large-scale boundary $\e$-regularity result for quasi-minimisers of optimal transport for non-constant densities. In the case of minimisers of quadratic optimal transport, this provides the following boundary $\e$-regularity result, which extends \cite{MiuraOtto} to non-constant densities.
\begin{corollary}\label{cor:OT}
 Suppose $\lambda,\mu$ admit $C^{0,\alpha}$ densities and $\Omega_0=\supp\,\lambda$, $\Omega_1=\supp\,\mu$ are $C^{1,\alpha}$-domains and let $T$ be the optimal transport map between $\lambda$ and $\mu$. Assume that
 $p\in \partial \Omega_0\cap \partial \Omega_1$ with $\nu_0(p)=\nu_1(p)$ and $\partial \Omega_i\cap Q_R(p)$ is connected for $i=0,1$, and Assumption \eqref{ass:topological}  holds in $Q_R(p)$. 
 There exist constants $\e,C>0$ depending only on $d,\alpha$ with the following property:  Set $\pi=(\text{id},T)_{\#}\lambda$, if 
    \begin{align*}
    &\e':=\frac{1}{R^{d+2}}\int_{(Q_R(p)\times \R^d)\cup (\R^d\times Q_R(p))}|y-x|^2\dd \pi\\
    &\quad+R^{2\alpha}([\lambda]_{\alpha,Q_R(p)}^2+[\mu]_{\alpha,Q_R(p)}^2+[\nu_0]_{\alpha,Q_R(p)}^2+[\nu_1]^2_{\alpha,Q_R(p)})\leq \e,
    \end{align*}
    then $T$ is of class $C^{1,\alpha}$ in $\overline{Q_{R/16}(p)\cap \supp\,\lambda}$ and
    \begin{align*}
        R^{2\alpha}[\nabla T]_{\alpha,Q_{R/16}(p)}^2 \leq C \e'.
    \end{align*}
\end{corollary}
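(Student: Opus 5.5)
The plan is a Campanato-type iteration of a one-step harmonic approximation at the boundary. The large-scale boundary $\e$-regularity result for quasi-minimisers announced above is specialised to the exact minimiser $\pi=(\mathrm{id},T)_\#\lambda$, for which the quasi-minimality defect vanishes. The key quantity is the boundary excess
\begin{align*}
E(R):=\frac{1}{R^{d+2}}\int_{(Q_R(p)\times \R^d)\cup (\R^d\times Q_R(p))}|y-x-(Ax+b)|^2\dd \pi,
\end{align*}
where $A$ is a symmetric matrix and $b$ a vector. The affine map $x\mapsto x+Ax+b$ must be compatible with the boundary: it should map the tangent half-space $\{\nu_0(p)\cdot (x-p)<0\}$ to itself. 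This is where the assumption $\nu_0(p)=\nu_1(p)$ enters.

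\textbf{Step 1: one-step improvement.} If $E(R)+R^{2\alpha}(\dots)\le\e$, then for some fixed $\theta\in(0,1)$
\begin{align*}
E(\theta R)\le \tfrac12 E(R)+C R^{2\alpha}\bigl([\lambda]_\alpha^2+[\mu]_\alpha^2+[\nu_0]_\alpha^2+[\nu_1]_\alpha^2\bigr),
\end{align*}
after re-centring by a new boundary-compatible affine map.

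\begin{itemize}
\item \emph{Localisation.} Assumption~\ref{ass:topological} localises the problem to $Q_R(p)$. The connectedness of $\partial\Omega_i\cap Q_R(p)$ lets us flatten both boundaries by $C^{1,\alpha}$ charts whose errors are controlled by $R^{2\alpha}[\nu_i]^2_\alpha$. The densities are frozen at $\lambda(p),\mu(p)$, with errors controlled by $R^{2\alpha}[\lambda]_\alpha^2$ and $R^{2\alpha}[\mu]_\alpha^2$.
\item \emph{Boundary harmonic approximation.} We construct a solution $\Phi$ of the Neumann problem $\Delta\Phi=\mathrm{const}$ in a half-ball, with $\partial_\nu\Phi=0$ on the flat boundary. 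Its flux data come from the Eulerian description of the displacement, in the spirit of \cite{KochOtto,MiuraOtto}. Because the normal flux through the flat boundary vanishes, the competitor keeps mass inside the half-space.
\item \emph{Excess estimate.} Comparing $\pi$ with the competitor built from $\nabla\Phi$ yields the bound $\int|y-x-\nabla\Phi|^2\lesssim \tau E(R)+C_\tau(\text{data})$.
\item \emph{Decay.} Neumann regularity gives $C^{2}$ bounds on $\Phi$ up to the flat boundary. Hence $\nabla\Phi$ is affine to second order at scale $\theta R$, with an affine part tangent-preserving. Choosing $\theta$ small, then $\tau$ and $\e$ small, gives the improvement.
\end{itemize}

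\textbf{Step 2: iteration.} Iterating along $R_k=\theta^kR$ gives, for each $k$,
\begin{align*}
E(R_k)\lesssim \theta^{2\alpha k}\e'.
\end{align*}
The affine maps $A_k,b_k$ converge geometrically. Running the same argument at interior points $q\in Q_{R/16}(p)\cap\Omega_0$ uses the interior $\e$-regularity of \cite{KochOtto} at scales below $\mathrm{dist}(q,\partial\Omega_0)$. At larger scales we use the boundary version centred at the nearest boundary point, where $\nu_0=\nu_1$ holds approximately after the preceding step. Together these give a Campanato estimate for $T$ on $\overline{Q_{R/16}(p)\cap\Omega_0}$, hence $T\in C^{1,\alpha}$ with $R^{2\alpha}[\nabla T]_\alpha^2\le C\e'$. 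Since $\pi$ is supported on the graph of $T$, the $L^2$ excess controls $L^\infty$ deviations via the standard $L^2\to L^\infty$ estimate for monotone maps.

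\textbf{Main obstacle.} I expect the boundary harmonic approximation to be the hardest step. There are two difficulties:
\begin{itemize}
\item We must verify that at points near $p$ the matching condition $\nu_0=\nu_1$ propagates, i.e.\ that $T$ maps $\partial\Omega_0$ to $\partial\Omega_1$. This follows from the topological condition together with the smallness of $\e'$.
\item We must avoid the Benamou–Brenier machinery used in \cite{MiuraOtto}. To do so, I would construct the competitor by gluing with a boundary-layer cut-off and control the flux error on the curved part via the $C^{1,\alpha}$ flattening.
\end{itemize}
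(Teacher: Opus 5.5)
Your overall architecture matches the paper's. The paper obtains the corollary from four ingredients:
\begin{itemize}
\item the boundary harmonic approximation (Proposition~\ref{P:1}, with $\Lambda=0$ for the exact minimiser);
\item the one-step improvement;
\item the Campanato iteration of Proposition~\ref{prop:largeScale}, run with regularisation parameter $\varepsilon=0$, so that $R_c=0$ and Remark~\ref{CampanatoDecay} gives decay at every scale;
\item the boundary/interior assembly of \cite[Section 6.3]{MiuraOtto}.
\end{itemize}

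\textbf{A step that fails as written.} Step~1 with contraction factor $\tfrac12$ does not yield Step~2. Iterating $E(\theta R)\le\tfrac12E(R)+C\,\mathrm{data}(R)$ with $\mathrm{data}(\theta^kR)\lesssim\theta^{2\alpha k}\,\mathrm{data}(R)$ gives only $E(\theta^kR)\lesssim 2^{-k}\varepsilon'$. This is Campanato decay with exponent $\alpha_0=\log 2/(2\log\theta^{-1})$.

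In your scheme $\theta$ has to be small. The Taylor remainder of $\nabla\Phi$ at scale $\theta R$ costs $C\theta^2E(R)$, and the approximation error costs $C\tau\theta^{-(d+2)}E(R)$. Once $\theta^{2\alpha}<\tfrac12$, you get $\alpha_0<\alpha$. So you would obtain $C^{1,\alpha_0}$, not the sharp $C^{1,\alpha}$ with $R^{2\alpha}[\nabla T]_\alpha^2\le C\varepsilon'$.

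The repair, as in the paper, is to prove the improvement with factor $\theta^{2\beta}$ for a fixed $\beta\in(\alpha,1)$. Choose $\theta$ with $C\theta^2\le\tfrac13\theta^{2\beta}$, then $\tau$ with $C\tau\theta^{-(d+2)}\le\tfrac13\theta^{2\beta}$, then the smallness threshold. This needs a \emph{rate} in the affine approximation of $\nabla\Phi$. That means $C^{2,\gamma}$ bounds with $\gamma>\alpha$, or $C^3$ bounds as in the paper (\eqref{e:phi-C2alpha}, via Lemma~\ref{lem:regularity}). $C^2$ bounds alone give no decay at all. With zero Neumann data on the flat face, even reflection supplies this, so the fix stays within your framework.

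\textbf{Smaller points.}
\begin{itemize}
\item The $L^\infty$ estimate in your last sentence is not needed for the conclusion. Campanato's criterion works directly with the $L^2$ excess, since $\int_{Q_r\times\R^d}|y-Ax-b|^2\,d\pi=\int_{Q_r}|T-Ax-b|^2\,d\lambda$.
\item Near the boundary that estimate is also not the standard monotone-map one. Monotonicity only bounds the components of $y-x$ pointing into $\Omega_0$; the outward components are controlled only through the topological condition (cf.\ Proposition~\ref{prop:LinftyPartial}(i) and Corollary~\ref{cor:Linfty}). Its real role is as input (B2) to the harmonic approximation, controlling trajectories that cross $\partial Q_r$. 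That is what your localisation bullet has to deliver.
\item Tangency is not propagated but re-imposed. The paper corrects $b$ and $B$ at every step at cost $\lesssim E^{1+\alpha}+\mathscr D$. At a nearby boundary point $q$, the initial mismatch between $\nu_0(q)$ and $\nu_1(T(q))$ is $O(\varepsilon'^{1/2})$, by H\"older continuity of the normals and the $L^\infty$ bound.
\item Your zero-flux condition on the flat face is a legitimate simplification of the paper's choice. The paper uses an enlarged box $\tilde Q_R$ with the boundary mismatch $\hat g_{1,R}-\hat g_{0,R}$ as flux on its bottom face. Both cost $O(D_{bdy}(R)/R^2)\lesssim R^{2\alpha}([\nabla g_0]_\alpha^2+[\nabla g_1]_\alpha^2)$, which is of data order.
\item No cut-off gluing is needed to avoid Benamou--Brenier machinery. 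The paper's competitor is just the Koch--Otto bound $W_2^2\le(\min\hat\kappa)^{-1}\int|\nabla\phi|^2$ on the box.
\end{itemize}
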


At small scales, we exploit the structure of \eqref{eq:dualProblem}, building on the ideas of \cite{GvalaniKoch1,GonzalezGvalaniKoch}. While in the case of convex supports, the proofs of \cite{GvalaniKoch1,GonzalezGvalaniKoch} can be adapted without significant problems, in our case, particular care is required to deal with the potential non-convexity of the domains.

The structure of our paper is as follows. In Section \ref{sec:preliminaries}, we collect our notation and some elliptic regularity estimates. Section \ref{sec:regularisedOT} is devoted to collecting a number of known results on regularised optimal transport and establishing some easy consequences near the boundary. In Section \ref{sec:harmonixApproximation} we develop the harmonic approximation result for quasi-minimisers that lies at the heart of the variational approach. We apply this result to obtain a large-scale regularity theory in Section \ref{sec:large-scaleRegularity}. In combination with small-scale results obtained in Section \ref{sec:smallScaleResults}, this will give the proof of our main result in \ref{sec:mainResultProof}.

\section{Preliminaries}\label{sec:preliminaries}
\subsection{General notation}\label{sec:notation}
We write $A\lesssim B$ if there is $C>0$ such that $A\leq CB$. We write $A\ll B$ to mean the assumption that there exists $c>0$ such that if $A\leq c B$, then the given claim holds. We generally allow $C,c$ to depend on the dimension $d$, the parameter $p\in [1,2]$ determining the regularisation, the constant in \eqref{eq:densityBound} and $\alpha$ the H\"older continuity parameter of the marginals.

We denote the standard Euclidean inner product by $\langle\cdot,\cdot\rangle$. 

Given a set $\Omega \subset \R^d$, $\bar \Omega$ denotes its closure, while $\textup{conv}(\Omega)$ denotes its convex hull. Given two sets $A,B\subset \R^d$, $A+B=\{x\in \R^d\colon x=a+b \text{ for some } a\in A, b\in B\}$ denotes their Minkowski sum

Given a Lipschitz set $\Omega$, we denote the tangential gradient  on the boundary of $\Omega$, $\partial \Omega$ by $\nabla'$. The normal is denoted by $\nu$. We recall that Lipschitz domains satisfy a uniform inner cone condition: If $\Omega$ is a Lipschitz domain, then there are $\alpha,\rho>0$ such that for any $x\in \Omega$, there is a cone $C(\alpha,\rho)$ with vertex $x$, aperture $\alpha$ and height $\rho$ such that $C(\alpha,\rho)\subset\Omega$.

For $R>0$, we denote
\begin{align*}
	Q_R&:=(-R,R)^d\quad\text{ and } \quad\#_R:=(Q_R\times\R^d)\cup (\R^d\times Q_R).
    \end{align*}
Given $x\in \R^d$, $Q_R(x):=x+Q_R$ is the cube centered at $x$. We also set $Q_R':=(-R,R)^{d-1}$. 

For a $C^{k,\alpha}$-continuous function $f$ on $Q_R$, we denote its H\"older-seminorm on $Q_R$ by $[f]_{k,\alpha,R}$. If $k=0$, we denote $[f]_{\alpha,R}=[f]_{0,\alpha,R}$. The full H\"older norm is denoted by $\|f\|_{C^{k,\alpha}(Q_R)}$.

Given two Borel, nonnegative measures with finite second moments and equal mass $\lambda,\mu$, we let 
\begin{align*}
\Omega_0:=\supp\,\lambda,\quad \Omega_1:=\supp\,\mu
\end{align*}
denote their support respectively. 
If $\lambda,\mu$ admit Lebesgue densities, then we identify the measures and their density so that $\d\lambda(x)=\lambda(x)\dx$ and $\d\mu(y)=\mu(y)\dy$. The set of couplings between $\lambda$ and $\mu$ is denoted by $\Pi(\lambda,\mu)$, that is, $\pi\in \Pi(\lambda,\mu)$ if for all test functions $\zeta$,
\begin{align*}
 \int \zeta(x)\d\pi(x,y)=\int \zeta\d\lambda,
 \quad
 \int \zeta(y)\d\pi(x,y)=\int \zeta\d\mu.
\end{align*}
The quadratic optimal transport problem is given by
\begin{align}\label{e:W2-def}
 W_2^2(\lambda,\mu)
 :=\min_{\pi\in\Pi(\lambda,\mu)}
       \int_{\R^d\times\R^d}|x-y|^2\d\pi(x,y).
\end{align}
We refer to \cite{Villani2003} for an account of this problem. Particularly, under the assumption of Theorem \ref{thm:main}, there exists a $\lambda$-a.~e.~unique optimal transport map $T:\supp\,\lambda\to \supp\,\mu$ such that $\pi=(\textup{id,T})_\#\lambda$ is optimal for \eqref{e:W2-def}.

For any set $U\subset \R^d$, we define
\begin{align*}
	\p(U):=\{(x,y)\in\supp\,\pi\cap ((U\times \R^d)\cup (\R^d\times U))\mid \exists t\in[0,1]\:\:X(t)\in\p U \}
\end{align*}
where for $x,y\in \R^d$, we write $X(t):=(1-t)x+ty$ for $t\in[0,1]$.  We set  $\p(\#_R):=\p(Q_R)$.

\subsection{Notation for key quantities}
We collect here the definitions of a number of key quantities we use throughout the paper.

Given a coupling $\pi\in \Pi(\lambda,\mu)$ and $R>0$, we denote the local transport cost by
\begin{align*}
E(\pi,R) := \frac 1 {R^{d+2}} \int_{\#_R} |x-y|^2 \d \pi.
\end{align*}
When considering regularised optimal transport, it is useful to consider the extended energy
\begin{align}\label{eq:extendedEnergy}
E_p(\pi,R):=\begin{cases}
E(\pi,R) \quad& \text{ if } p=1\\
E(\pi,R)+R^{-(d+2)}\e^2 \int_{\#_R} h_p\left(\frac{\dd\pi}{\dd P}\right) \dd P\quad& \text{ if } p>1.
\end{cases}
\end{align}

We need to track two sources of error arising from the marginals. First, we need to quantify how close the marginals locally are to the Lebesgue measure. For measures $\lambda,\mu$, we denote
\begin{align*}
\kappa_{\lambda,R} := \frac{\lambda(Q_R)}{|Q_R\cap \supp\,\lambda|},\quad \kappa_{\mu,R} := \frac{\mu(Q_R)}{|Q_R\cap \supp\,\mu|}
    \end{align*}
and we define 
\begin{align*}
	D(R,\lambda,\mu)&:=\frac{1}{R^{d+2}}W_2^2(\lambda\llcorner Q_R,\kappa_{\lambda,R}\d x\llcorner\supp\,\lambda\cap Q_R)+(\kappa_{\lambda,R}-1)^2\\
	&\quad+\frac{1}{R^{d+2}}W_2^2(\mu\llcorner Q_R,\kappa_{\mu,R}\d x\llcorner\supp\,\mu\cap Q_R)+(\kappa_{\mu,R}-1)^2
\end{align*}
where we think of $D(R,\lambda,\mu)$ as controlling the local ``flatness'' of $\lambda,\mu$ in $Q_R$. If the choice of measure $\lambda,\mu$ is clear from the context, we write $D(R)$. 

Second, we need to measure how flat the boundary is. We will usually reduce to the following normalised situation. Let $B(p)$ be either a cube or a ball with center $p$.
\begin{assumption}\label{ass:tangency}
Given $C^{1,\alpha}$-domains $\Omega_0,\Omega_1$, we say the tangency condition is satisfied in $B(p)\subset \R^d$ if
\begin{align*}
    p\in \partial \Omega_0\cap \partial\Omega_1,\quad \nu_0(p)=\nu_1(p),\quad \partial \Omega_i\cap B(p) \text{ is connected for } i=0,1,
\end{align*}
where $\nu_i(p)$ is the normal to $\Omega_i$ at $p$.
\end{assumption}

We frequently use the following equivalent (up to a translation and rotation of coordinates) characterization of Assumption \ref{ass:tangency} in local coordinates: 
\begin{assumption}\label{ass:tangencyLocal}
There are $C^{1,\alpha}$ functions $g_0,g_1:B'\to\R$ such that for $i=0,1$,
\begin{align*}
    \Omega_i \cap B(p) = \{(x_1,x')\in B(p) \mid x_1>g_i(x'), x'\in B'\},\quad g_i(0)=0,\, \nabla' g_i(0)=0,
\end{align*}
where $B' = B\cap \{x_1=0\}$.
\end{assumption}

We write 
\begin{align*}
D_{bdy}(R):= \sup_{Q_R'} |g_0|^2+\sup_{Q_R'} |g_1|^2.
\end{align*}
Note that if Assumption \ref{ass:tangencyLocal} holds, then 
\begin{align*}
    D_{bdy}(R)\lesssim R^2\left( Lip(g_0,Q_R)^2+Lip(g_1,Q_R)^2 \right).
\end{align*}
Finally, we combine information on the boundary regularity and the regularity of marginals into a single data term
\begin{align*}
\mathscr D(R,\lambda,\mu):= R^{2\alpha}([\lambda]_{\alpha,R}^2+[\mu]_{\alpha,R}^2)+R^{2\alpha}([\nabla g_0]_{\alpha,Q_R'}^2+[\nabla g_1]_{\alpha,Q_R'}^2).
\end{align*}

\subsection{Regularity estimates for the Poisson equation}
In this section we provide a number of regularity estimates for solutions of the following problem
\begin{align*}
	-\Delta\phi=c\text{ on }Q,\quad \nabla\phi\cdot\nu=f\text{ on }\partial Q
\end{align*}
where $Q:=(0,a)\times (0,1)^{d-1}$ for some $a>0$, and $f$ is Lipschitz on 
each open face of $\partial Q$ (i.e.\ wherever the normal $\nu$ to $\partial Q$ is continuous). The constant $c$ is such that the problem is solvable. We consider the unique solution satisfying 
\begin{align}\label{c13}
	\int_{Q}\phi=0.                       
\end{align} 
\begin{lemma}\label{lem:regularity} For any $0<\alpha<1$, the following estimates hold:
	\begin{align}
		\|\phi\|_{C^{1,\alpha}(\bar Q)}&\lesssim \|f\|_{C^\alpha(\partial Q)},\label{c52}\\
		\|\phi\|_{C^{3}(\overline{\Omega})}&\lesssim \|f|_{\{0\}\times (0,1)^{d-1}}\|_{C^{2}}+\|f\|_{L^2(\p Q) }\label{c53}
	\end{align}
    where $\Omega=\left(0,\frac{a}{2}\right)\times \left(\frac{1}{4},\frac{3}{4}\right)^{d-1}$.
\end{lemma}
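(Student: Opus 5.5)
We reduce both estimates to regularity for the Neumann problem on the half-space by reflecting evenly across the faces of the box $Q=(0,a)\times(0,1)^{d-1}$. Since the Neumann data on each face is only Lipschitz there, and may jump at edges, the reflection has to be combined with a localisation argument.

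\textbf{Estimate \eqref{c52}.} Solvability forces $c|Q|=-\int_{\partial Q} f$, so $|c|\lesssim \|f\|_{L^\infty(\partial Q)}$. I first use the variational formulation with the normalisation \eqref{c13}, the trace inequality and Poincar\'e, which give
\begin{align*}
\|\phi\|_{H^1(Q)}\lesssim |c|+\|f\|_{L^2(\partial Q)}\lesssim \|f\|_{C^\alpha(\partial Q)}.
\end{align*}
Next I localise with a partition of unity subordinate to three kinds of sets: interior balls, neighbourhoods of the interiors of faces, and neighbourhoods of edges and corners.
\begin{itemize}
\item At interior points, Schauder estimates for $-\Delta\phi=c$ apply directly.
\item Near the interior of a face, the oblique/Neumann Schauder theory for $C^\alpha$ data gives a local $C^{1,\alpha}$ bound. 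This is the standard estimate for the half-space Neumann problem.
\item Near an edge or corner where $k$ faces meet orthogonally, I first subtract a corrector $w$ with $\partial_\nu w=f$ on those faces. For instance, take $w=\sum_j x_j\,\tilde f_j$ with $\tilde f_j$ a suitable $C^\alpha$ extension of $f$ off the face $\{x_j=0\}$, or a sum of half-space Neumann solutions. Then I reflect evenly across the $k$ hyperplanes. Because the faces are orthogonal, the even reflection of a function with zero Neumann data is a weak solution of Poisson's equation on a full neighbourhood, with a right-hand side in a suitable Campanato/Morrey space. Campanato estimates then give $C^{1,\alpha}$.
\end{itemize}
I expect the corner step to be the main obstacle. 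The data $f$ need not be compatible at edges: $f$ on adjacent faces need not agree in any sense. Still, $C^{1,\alpha}$ is plausible because, at a right angle, the Neumann data on the two faces prescribe different components of $\nabla\phi$, namely $\partial_1\phi$ and $\partial_2\phi$. Hence no compatibility is needed, and the reflection argument with a componentwise corrector should go through. Absorbing the lower-order terms via the $H^1$ bound yields \eqref{c52}.

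\textbf{Estimate \eqref{c53}.} On $\Omega=(0,\tfrac a2)\times(\tfrac14,\tfrac34)^{d-1}$ we stay away from all faces except $\{x_1=0\}$, and there the data is $C^2$. I take a cutoff $\eta$ equal to $1$ on $\Omega$ and supported in $[0,\tfrac{3a}{4})\times(\tfrac18,\tfrac78)^{d-1}$. The function $\eta\phi$ then solves a half-space-type Neumann problem on $\{x_1=0\}$ with
\begin{itemize}
\item data $\eta f+\phi\,\partial_\nu\eta$ on the face,
\item right-hand side $c\eta-2\nabla\eta\cdot\nabla\phi-\phi\Delta\eta$ in the interior.
\end{itemize}
Higher-order Schauder estimates for the Neumann problem, e.g.\ $C^{3,\beta}$ from $C^{2,\beta}$ data, or $C^3$ from $C^2$ data with a slight loss handled by interpolation, bound $\|\phi\|_{C^3(\bar\Omega)}$. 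The bound is in terms of $\|f|_{\{0\}\times(0,1)^{d-1}}\|_{C^2}$ and lower-order norms of $\phi$ on a slightly larger set.

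These lower-order norms are controlled by bootstrapping over a finite sequence of nested sets. The starting point is interior regularity together with the $H^1$ bound above, which uses only $\|f\|_{L^2(\partial Q)}$ and $|c|\lesssim\|f\|_{L^1(\partial Q)}$. Since $\phi$ is harmonic up to the constant $c$, each step of the bootstrap gains regularity by local estimates alone. This gives \eqref{c53}.
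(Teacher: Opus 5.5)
\paragraph{Estimate \eqref{c52}.} Your argument takes a different route from the paper's, and the corner step, which you yourself flag as the main obstacle, is not actually established. Neither of your suggested correctors works as written.
\begin{itemize}
\item If $\tilde f_j$ is merely a $C^\alpha$ extension, the derivatives $\partial_i\tilde f_j$ ($i\neq j$) need not exist. Then $w=\sum_j x_j\tilde f_j$ is not $C^1$ and $\Delta w$ is not a function.
\item Even for a smooth extension, the cross terms $x_i\partial_j\tilde f_i$ in $\partial_j w$ destroy the Neumann condition on $\{x_j=0\}$, unless each $\tilde f_i$ is chosen even in $x_j$.
\item A sum of half-space Neumann solutions fails, because each summand has nonzero normal derivative on the other faces.
\end{itemize}
A working corrector does exist: take $\tilde f_j(x)$ to be the mollification at scale $x_j$ of $f_j$, reflected evenly in the tangential variables. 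But then you only get $|\Delta w|\lesssim\sum_j x_j^{\alpha-1}$. This is not even in $L^2_{\mathrm{loc}}$ when $\alpha\le 1/2$, so the Morrey space is still unidentified; $L^p$-Morrey estimates with $1<p<1/(1-\alpha)$ would do.

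The paper avoids all of this by linearity. It writes $f$ as the sum of its restrictions to the open faces and splits $c$ accordingly. In each subproblem the data vanish on every face but one, so even reflection across the orthogonal zero-data faces is legitimate with no corrector. The edges of the data-carrying face become relative interior points of the flat face $\{0\}\times(-1,2)^{d-1}$ of the enlarged box $(0,a)\times(-1,2)^{d-1}$. The reflected data remain $C^\alpha$, and the standard flat-boundary Neumann Schauder estimate plus the energy estimate give \eqref{c52}. Your observation that orthogonal faces prescribe different components of $\nabla\phi$ is exactly what makes this superposition work.

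\paragraph{Estimate \eqref{c53}.} Your localisation and bootstrap follow the same route as the paper's appeal to boundary Schauder theory (Gilbarg--Trudinger, Theorem 6.30). However, the step \emph{$C^3$ from $C^2$ data with a slight loss handled by interpolation} is wrong. Interpolation cannot recover a lost derivative, and $C^2$ Neumann data only give $\phi\in C^{2,\beta}$ for $\beta<1$.

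To see this, note that near an interior point of the face $\{x_1=0\}$, $\partial_1\phi$ is harmonic with boundary values $-f$. A bound on $\nabla^2\partial_1\phi$ by $\|f\|_{C^2}$ would therefore mean the Dirichlet problem maps $C^2$ data to $C^2$ solutions, which is false. For $d=2$, $\partial_1\partial_2$ of the harmonic extension tends at the boundary, up to sign, to the Hilbert transform of $f''$. This is unbounded, e.g., for $f''(t)=\operatorname{sgn}(t-\tfrac12)/\log(1/|t-\tfrac12|)$ near $t=\tfrac12$.

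So \eqref{c53} is false as literally stated, and the cited theorem (a $C^{2,\alpha}$ estimate from $C^{1,\alpha}$ data) does not yield it either. The correct version has $\|f|_{\{0\}\times(0,1)^{d-1}}\|_{C^{2,\beta}}$ on the right-hand side. This change is harmless, because the lemma is only applied to mollified data. With it, your argument (Schauder for $\eta\phi$, lower-order terms bootstrapped from the $H^1$ bound) goes through.
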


\begin{proof}
We first show \eqref{c52}. Label the $2d$ open faces of $Q$ as $F_1^\pm,\dots,F_d^\pm$, where $F_i^- = \{x_i=0\}\cap\partial Q$ and $F_i^+ = \{x_i=1\}\cap\partial Q$ (with the exception for $F_1^+ = \{x_1=a\}\cap \p Q$). Writing $f$ as a sum of its restrictions to each open face and by linearity, it suffices to assume that $f$ is supported on a single face, say on	$F_1^- = \{0\}\times (0,1)^{d-1}$. Hence, we may identify $f$ with a Lipschitz function $f:(0,1)^{d-1}\to\R$. It then suffices to show that
	\begin{align*}
		\|\phi\|_{C^{1,\alpha}([0,a/2]\times [0,1]^{d-1}) }\lesssim \|f\|_{C^{\alpha}}
	\end{align*}
which combined with	interior Schauder estimates as well as the standard energy estimate ${\|\phi\|_{L^2(Q)}\lesssim \|f\|_{L^2}}$ completes the proof, see e.g. \cite{GT}.

\medskip
Let $\tilde Q:=(0,a)\times (-1,2)^{d-1}$. Extend $\phi$ and $f$ by even reflection. More precisely,	define $\sigma: (-1,2)\to (0,1)$ by  $\sigma(t)=|t|$ for $t\in(-1,1] $ and $ \sigma(t)=2-t$ for $t\in (1,2) $. Define
\begin{align*}
\tilde\phi(x_1,\dots,x_d):= \phi\big(x_1,\,\sigma(x_2),\dots,\sigma(x_d)\big)
\end{align*}
and similarly for $\tilde f$. Note that $\|\tilde f\|_{W^{1,\infty}((-1,2)^{d-1})}\leq \|f\|_{W^{1,\infty}((0,1)^{d-1})}$ and
\begin{align}
	-\Delta\tilde\phi=c\text{ on }\tilde Q,\quad
	\nabla\tilde\phi\cdot\nu=\tilde f\text{ on }\partial\tilde Q.
\end{align}
Let $\Omega$ be a smooth domain compactly supported in 
$\overline{\tilde Q} $ such that $ [0,a/2]\times [0,1]^{d-1} $ is compactly supported in $\bar\Omega$. As $\Omega$ is a smooth domain, by boundary Schauder theory and energy estimate, see \cite{GT},
\begin{align}
	\|\phi\|_{C^{1,\alpha}([0,a/2]\times [0,1]^{d-1}) }\leq \|\tilde\phi\|_{C^{1,\alpha}(\bar\Omega)}\lesssim \|\nabla\phi\|_{L^2(\tilde Q)}+ \|\bar f\|_{C^{\alpha}(\p \tilde Q)}\lesssim \|f\|_{C^\alpha}
\end{align}
which concludes the proof. 

To show \eqref{c53}, let $\tilde{\Omega}$ be a smooth domain compactly supported in $Q$ such that $\Omega\subset \tilde{\Omega}$. The conclusion then follows from standard boundary Schauder theory, cf. \cite[Theorem 6.30]{GT}.

\end{proof}

\section{Regularized optimal transport}\label{sec:regularisedOT}
Assume $h:[0,\infty)\to\R$ satisfies Assumption \ref{ass:h} and consider $OT_h$.  Under assumptions in this paper, the above problem admits a minimizer $\pi_\e$. 

Note that adding an affine function to $h$ changes the functional in
\eqref{eq:regularisedProblem} by a constant; while multiplying $h$ by a factor $c>0$ results in $OT_{\eps,ch}=OT_{c^{1/2}\eps,h}$. Hence, various normalization convention for entropic and polynomially regularized optimal transport in the literature are equivalent.

\subsection{Duality and basic properties of the potentials}
As noted in the introduction $OT_{\e,h}(\lambda,\mu)$ being a convex minimization problem, admits a dual problem,
\begin{align}\label{e:ROT-dual}\tag{dual-OT}
\sup_{f\in L^1(\lambda),g\in L^1(\mu)}\int_{\R^d}f\d\lambda+\int_{\R^d}g\d \mu-\e^2\int_{\R^d\times \R^d}h^\ast \left( \frac{f(x)+g(y)-|x-y|^2}{\e^2} \right)\d P
\end{align}
where $h^\ast$ is the convex conjugate of $h$. We use the shorthand notation
\begin{align}\label{e:xi-def}
\xi(x,y):=f_\e(x)+g_\e(y)-|x-y|^2
\end{align}
Recall that we write
\begin{align*}
\phi_\e(x)=|x|^2-f_\e(x),\quad \psi_\e(y)=|y|^2-g_{\e}(y).
\end{align*}
We denote
\begin{align*}
\mathcal S_x = \{y\in \supp\,\mu\colon \xi(x,y)\geq \e^2 h'(0)\},\quad \mathcal T_y = \{x\in \supp\,\lambda\colon \xi(x,y)\geq \e^2 h'(0)\}.
\end{align*}

We collect some basic properties of the dual problem and the potentials
\begin{proposition}[Basic properties of the potentials]
\label{sec:basicPropertiesPotentials}	
Let $\lambda,\mu$ be compactly supported, absolutely continuous
probability measures on $\mathbb R^d$. Assume $h$ satisfies Assumption \ref{ass:h} and consider the problems \eqref{eq:regularisedProblem} and \eqref{eq:dualProblem}. Then the following statements hold.

\begin{enumerate}
\item The problem \eqref{eq:regularisedProblem} admits a unique minimiser
$\pi_\e$, and its dual \eqref{e:ROT-dual} admit a maximising pair
$(f_\e,g_\e)$. It holds
\begin{align*}
\frac{d\pi_\e}{dP}(x,y) &= (h')^{-1}\left(\frac{\xi(x,y)}{\e^2}\right)\\
\supp\,\pi_\e &= \{(x,y)\in \supp\,\lambda\times \supp\,\mu\colon \xi(x,y)\geq \e^2 h'(0)\}.
\end{align*}
In particular, there is no duality gap.
\item The  potentials $(\phi_\e,\psi_\e)$ admit convex Lipschitz representatives on their respective
supports, satisfying
\[
\mbox{Lip}(\phi_\e;\mbox{supp}\,\lambda)
+
\mbox{Lip}(\psi_\e;\mbox{supp}\,\mu)
\leq C,
\]
where $C$ depends only on the marginals and is independent of
$\e$. Moreover, $(\phi_\e,\psi_\e)$ may be extended to convex Lipschitz functions on $\R^d$ satisfying for every $x,y\in \R^d$,
\[
\int \frac{d\pi_\e}{dP}(x,y)\,d\mu(y)=1,
\qquad
\int \frac{d\pi_\e}{dP}(x,y)\,d\lambda(x)=1
\]

\item In the case $h=h_p$ with $p\in [1,2]$, it holds
\begin{equation}\label{eq:gradPhiPower}
\nabla\phi_\e(x)
= \begin{cases}
2\frac{\int_{\mathcal S_x} y\,\xi(x,y)^{\frac{2-p}{p-1}}\,d\mu(y)}
{\int_{\mathcal S_x}
\xi(x,y)^{\frac{2-p}{p-1}}\,d\mu(y)}\quad& \text{ if } p>1\\\
2\int y\,
\exp\!\left(\frac{\xi(x,y)}{\e^2}-1\right)\,d\mu(y). \quad&\text{ if } p=1.
\end{cases}
\end{equation}
Analogous formulas hold for $\nabla\psi_\e$.
\end{enumerate}
\end{proposition}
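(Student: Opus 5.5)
The plan is to treat the three items in turn. Items (i) and (ii) rest on standard convex duality for \eqref{eq:regularisedProblem} together with the first-order optimality conditions of \eqref{e:ROT-dual}. Item (iii) is then a direct differentiation of the resulting Schrödinger-type system.

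For (i), existence of a minimiser $\pi_\e$ follows from the direct method. $\Pi(\lambda,\mu)$ is weakly compact because the marginals are compactly supported. The map $\pi\mapsto\int h(\frac{d\pi}{dP})\,dP$ is weakly lower semicontinuous, since $h$ is convex and superlinear (Assumption \ref{ass:h}), and its sublevel sets are equi-integrable. Uniqueness follows from strict convexity of $h$.

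For the dual, I would invoke Fenchel--Rockafellar duality. Following the standard approach for Orlicz-type regularisations (e.g.\ Nutz, or Bayraktar--Eckstein--Zhang), I would obtain maximisers $(f_\e,g_\e)$ in $L^1$. The strategy is to bound a maximising sequence in $L^\infty$, using the concavity of the functional and the fact that one may normalise $\int f\,d\lambda=0$, and then pass to the limit.

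Equality of the values, combined with the Fenchel--Young inequality $h(z)+h^\ast(s)\ge zs$, forces equality $P$-a.e. This gives $\frac{d\pi_\e}{dP}=(h^\ast)'(\xi/\e^2)=(h')^{-1}(\xi/\e^2)$, where we set it to $0$ when $\xi/\e^2\le h'(0)$. The support formula follows.

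For (ii), the first variation of the dual in $f$ and $g$ yields the marginal constraints
\[
\int (h')^{-1}\Big(\frac{f(x)+g(y)-|x-y|^2}{\e^2}\Big)\,d\mu(y)=1
\]
and its analogue in $y$. These are the constraints displayed in the statement.

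We use this identity to define $f_\e(x)$ for every $x\in\R^d$. The left-hand side is strictly increasing and continuous in $f(x)$, so it has a unique root. This gives the extension to all of $\R^d$.

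Rewriting in terms of $\phi_\e=|x|^2-f_\e$, the condition reads
\[
\int (h')^{-1}\Big(\frac{2\langle x,y\rangle-\phi_\e(x)-\psi_\e(y)}{\e^2}\Big)\,d\mu(y)=1.
\]
This exhibits $\phi_\e(x)$ as the unique value $t$ solving $F(x,t)=1$, where $F$ is jointly convex in $(x,t)$ up to monotonicity. Concretely, $\phi_\e$ is a supremum-type implicit function of affine functions $x\mapsto 2\langle x,y\rangle$, which gives convexity. One checks this via the sublevel set $\{(x,t):F(x,t)\le 1\}$, which is convex because $(h')^{-1}$ is convex and nondecreasing on the relevant range. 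For $h_p$ the inverse derivative is $z\mapsto c\,z_+^{1/(p-1)}$, which is convex since $p\le2$.

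The Lipschitz bound follows because $\nabla\phi_\e(x)$ is an average of the points $2y$ over $y\in\supp\mu$, as (iii) shows. Hence $|\nabla\phi_\e|\le 2\sup_{\supp\mu}|y|$, which is independent of $\e$.

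For (iii), I would differentiate the identity above in $x$ by implicit differentiation; this is justified by dominated convergence since the integrand is $C^1$ in the exponent away from the threshold and Lipschitz across it. Writing $G=(h')^{-1}$, we obtain
\[
\int G'\!\Big(\frac{\xi}{\e^2}\Big)\big(2y-\nabla\phi_\e(x)\big)\,d\mu(y)=0.
\]
Solving for $\nabla\phi_\e(x)$ gives a weighted average of $2y$ with weights $G'(\xi/\e^2)$.

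For $h_p$ with $p>1$, we have $h_p'(z)=\frac{p}{p-1}z^{p-1}$, so $G'(s)\propto s^{\frac{2-p}{p-1}}$ on $\mathcal S_x$. The factors $\e^{-2}$ cancel between numerator and denominator, giving the first formula.

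For $p=1$ we have $G(s)=e^{s-1}$, so $G'=G$. The denominator is then $\int G\,d\mu=1$, which gives the second formula.

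I expect the main obstacle to be the existence and boundedness of dual maximisers for general $h$, which is needed before the optimality conditions can be used. I would handle it by citing the known duality theory rather than reproving it.
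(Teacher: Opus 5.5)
Your proposal is correct and follows the same skeleton as the paper's proof. Existence, duality and the density formula come from the literature (the paper cites \cite{GvalaniKoch1}). The potentials are extended to $\R^d$ by taking the unique root of the marginal identity $F(x,t)=1$. The gradient formula comes from differentiating that identity (the paper cites \cite{GonzalezGvalaniKoch}).

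The difference is that you actually prove the two remaining claims of (ii) rather than citing them.
\begin{itemize}
\item \emph{Convexity.} You observe that $\{(x,t)\colon F(x,t)\le 1\}$ is exactly the epigraph of $\phi_\e$, and that it is convex because $F$ is jointly convex in $(x,t)$ when $(h')^{-1}$ is convex. This is a complete, short argument; the paper instead defers to \cite[Lemma 10]{GvalaniKoch1}.
\item \emph{Lipschitz bound.} Your version is better than the paper's. The paper's remark that the convex extension is ``automatically Lipschitz on $\R^d$'' by itself only gives local Lipschitz continuity. Your observation $\nabla\phi_\e(x)\in 2\,\textup{conv}(\supp\,\mu)$ gives the global, $\e$-independent bound $2\sup_{\supp\,\mu}|y|$.
\end{itemize}
If you want the Lipschitz bound without differentiating, monotonicity suffices. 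With $M=\sup_{\supp\,\mu}|y|$ one has $F(x',\phi_\e(x)-2M|x-x'|)\ge F(x,\phi_\e(x))=1$, hence $\phi_\e(x')\ge \phi_\e(x)-2M|x-x'|$. This holds for any nondecreasing $(h')^{-1}$.

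One caveat concerns the statement rather than your proof. Your convexity step genuinely needs $(h')^{-1}$ to be convex, i.e.\ $h'$ concave. This holds for every $h_p$ with $p\in[1,2]$, since $(h')^{-1}(s)=e^{s-1}$, respectively $c\,s_+^{1/(p-1)}$ with $1/(p-1)\ge 1$; these are the only cases the paper uses later. However, it does not follow from Assumption \ref{ass:h}, and without it convexity can fail.

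Here is a counterexample. Take $h(z)=z^3/3$, so $(h')^{-1}(s)=\sqrt{s_+}$. Work in $d=1$ with $\lambda$ uniform on $[-1,1]$ and $\mu$ symmetric and concentrated near $\pm 1$. Symmetry gives $\psi_\e(\pm1)=c$. In the two-point limit the marginal identity reads $\sqrt{(2x-\phi-c)_+}+\sqrt{(-2x-\phi-c)_+}=2\e$. Its solution is $\phi(x)=-c-\e^2-x^2/\e^2$ on $|x|\le \e^2$, which is strictly concave. By the uniform Lipschitz bound on $\psi_\e$, this non-convexity persists for absolutely continuous $\mu$ close enough to the two-point measure. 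So your argument proves (ii) in exactly the generality in which it holds.
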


\begin{proof}
The statements on $\supp\,\lambda\times \supp\,\mu$ follow from \cite[Proposition 18 and Lemmas 19--21]{GvalaniKoch1};
continuity extends the marginal identities to every point of the supports.

We now detail how to obtain the extensions to $\R^d$. Extend $\phi_\e$ by imposing its second marginal identity at every $x\in\mathbb R^d$. Keeping the original values of $\psi_\e$ on
$\operatorname{supp}\mu$, for each fixed $x$ we seek $t$ satisfying
$F(x,t)=1$, where
\[
F(x,t):=\int (h')^{-1}\left(\frac{2\langle x,y\rangle - t-\psi_\e(y)}{\e^2}\right)\,d\mu(y).
\]
The second marginal identity gives $F(x,\phi_\e(x))=1$ for $x\in \supp\,\lambda$. 
For each fixed $x$, boundedness of $y$ and $\psi_\e(y)$ on
$\operatorname{supp}\mu$ gives
\[
\lim_{t\to-\infty}F(x,t)=+\infty,
\qquad \lim_{t\to+\infty}F(x,t)=0.
\]
The function $F(x,\cdot)$ is continuous and strictly decreasing while positive.
Define $\phi_\e(x)$ as its unique solution of $F(x,t)=1$. Uniqueness preserves the original values on $\operatorname{supp}\lambda$. Define $\psi_\e$ symmetrically from the original values of $\phi_\e$ on $\operatorname{supp}\lambda$.
With $\xi(x,y)=\langle x, y\rangle-\phi_\e(x)-\psi_\e(y)$, we now have
\begin{equation}\label{eq:globalRow}
\int (h')^{-1}\left(\frac{\xi_\e(x,y)}{\e^2}\right)\,d\mu(y)=1
\end{equation}
and the analogous result for every $y\in\mathbb R^d$. The convexity argument
in the proof of \cite[Lemma 10]{GvalaniKoch1} relies only on \eqref{eq:globalRow} and its analogous result for $y\in \R^d$. Consequently, the extensions are convex and thus automatically Lipschitz on $\R^d$. The gradient formula now follows as in \cite[Proposition 4.2]{GonzalezGvalaniKoch}.
\end{proof}

\subsection{Convergence to optimal transport}
The issue of convergence of ROT to OT has been established in various senses. We recall here the convergence of the energy. 
\begin{prop}[Convergence of the energy]\label{prop:EnergyConvergence}
Let $\lambda,\mu$ be measures with compact support and bounded densities. Then for $\e\ll 1$,
\begin{align*}
OT_{\e,h_p}(\lambda,\mu)-W_2^2(\lambda,\mu)
 \lesssim
 \begin{cases}
\e^2\log(\e^{-2})+\e^2,&p=1,\\
\e^{\frac{4}{d(p-1)+2}},&1<p\leq2,
 \end{cases}
\end{align*}
where the implicit constant depends on $d$, $\lambda,\mu$, and $p$. If $p=1$ and $\pi_\e$ solves $OT_{\e,h_1}$, it holds in addition that
\begin{align*}
\int |x-y|^2 \dd \pi_\e-W_2^2(\lambda,\mu)\lesssim \e^2.
\end{align*}
\end{prop}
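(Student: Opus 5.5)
The approach is to compare $\pi_\e$ with a competitor obtained by ``blurring'' the optimal plan $\pi_0=(\textup{id},T)_\#\lambda$ at a scale $r$, and then to optimise in $r$. The upper bound follows from evaluating the primal functional of \eqref{eq:regularisedProblem} at this competitor. The additional statement for $p=1$ follows because the entropy term is bounded below.

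\textbf{Construction of the competitor.} I would use the block approximation. Partition $\R^d$ into cubes $\{Q_i\}$ of side length $r$. Set $\lambda_i=\lambda\llcorner Q_i$ and let $\mu_i$ be the image of $\lambda_i$ under the optimal plan, that is, $\mu_i=(T)_\#\lambda_i$; equivalently, $\mu_i$ is the second marginal of $\pi_0\llcorner(Q_i\times\R^d)$. Define
\begin{align*}
\pi^r:=\sum_i \frac{\lambda_i\otimes\mu_i}{\lambda(Q_i)}.
\end{align*}
This is a coupling in $\Pi(\lambda,\mu)$. By Lipschitz continuity of $T$, or more generally by the boundedness of the supports together with a standard estimate, we have $|x-y|^2\le |x-T(x')|^2\le (|x-T(x)|+|T(x)-T(x')|)^2$ for $x,x'\in Q_i$. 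Only boundedness of the densities is assumed, so I would avoid relying on Lipschitz continuity of $T$. Instead I would use the standard block estimate:
\begin{align*}
\int|x-y|^2\d\pi^r\le \int|x-y|^2\d\pi_0+C r.
\end{align*}
This comes from writing $|x-y|^2=|x'-y|^2+2\langle x-x',x'-y\rangle+|x-x'|^2$ for $(x',y)$ drawn from $\pi_0$ restricted to $Q_i$, and bounding the cross term using $|x-x'|\le\sqrt d\, r$ and the bounded supports.

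\textbf{Improving the error to $O(r^2)$.} A linear error in $r$ is too weak, so I expect this to be the main obstacle. The remedy is to use the cyclic monotonicity/Kantorovich structure of $\pi_0$. Write $|x-y|^2=|x|^2+|y|^2-2\langle x,y\rangle$. The terms $|x|^2$ and $|y|^2$ integrate identically against $\pi^r$ and $\pi_0$, because the marginals agree. The difference in the cross term is therefore
\begin{align*}
2\sum_i\int_{Q_i\times Q_i}\langle x-x',y'\rangle\,\d\lambda_i(x)\,\d\pi_0(x',y')/\lambda(Q_i).
\end{align*}
Next I use the convex Brenier potential $u$ with $y'=\nabla u(x')$, together with the inequality $\langle x-x',\nabla u(x')\rangle\le u(x)-u(x')$. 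Summing against the product structure, the contributions from $u(x)$ and $u(x')$ cancel, since both have marginal $\lambda_i$. This gives a bound of $\sum_i\int\int (u(x)-u(x')-\langle x-x',\nabla u(x')\rangle)$, which has the correct sign. To control its size I would instead compare with the linearization at the cube centre, so that the error is bounded by $\mathrm{osc}$ of $\nabla u$ times $r$. This is still only $O(r)$ without regularity of $T$. Since only bounded densities are assumed, I would accept that the statement presumably uses the known estimate from the literature (e.g. \cite{GvalaniKoch1}), in which the cost error is $\lesssim r^2$, obtained via a shifted, translation-averaged version of the block coupling.

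\textbf{Regulariser bound and optimisation.} The density of $\pi^r$ with respect to $P$ is $\sum_i \frac{\mathbf 1_{Q_i}(x)}{\lambda(Q_i)}\frac{\d\mu_i}{\d\mu}(y)$. It is bounded by $\lambda(Q_i)^{-1}\sim r^{-d}$ and supported on a set of $P$-measure $\sim r^d$. Hence
\begin{align*}
\int h_p\left(\frac{\d\pi^r}{\d P}\right)\d P\lesssim
\begin{cases}
r^{-d(p-1)}, & 1<p\le 2,\\
\log r^{-d}, & p=1.
\end{cases}
\end{align*}
This yields a total bound of $r^2+\e^2 r^{-d(p-1)}$. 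Choosing $r=\e^{2/(d(p-1)+2)}$ gives $\e^{4/(d(p-1)+2)}$. For $p=1$, the bound is $r^2+\e^2\log r^{-d}$ with $r=\e$, giving $\e^2\log\e^{-2}+\e^2$.

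\textbf{The extra claim for $p=1$.} I would use the dual formulation \eqref{e:ROT-dual} rather than the primal. Take the Kantorovich pair $(f_0,g_0)$, which is optimal for $W_2^2$ and satisfies $\xi_0\le 0$, as a trial pair in the dual at $\pi_\e$, and compare with the relation $\int \xi\,\d\pi_\e$. I would expect this to give $\int|x-y|^2\d\pi_\e-W_2^2\le \e^2\int h^\ast$-type terms $\lesssim\e^2$, using that $\int e^{\xi_0/\e^2-1}\d P\le 1$.
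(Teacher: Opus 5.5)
The paper gives no argument for this proposition beyond citing Eckstein--Nutz. A self-contained block-coupling proof would therefore be a legitimate (and standard) alternative route, but as written your proposal has two genuine gaps.

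\textbf{First gap: the $O(r^2)$ cost bound.} You concede this step and defer it to the literature, yet it is the whole content of the upper bound. It follows from the identity you essentially derived. Note that the cross term is $\langle x'-x,y'\rangle$, not $\langle x-x',y'\rangle$. Symmetrising in $x\leftrightarrow x'$ gives, for any $a_i\in\R^d$,
\begin{align*}
\int|x-y|^2\d\pi^r-W_2^2(\lambda,\mu)&=\sum_i\frac{1}{\lambda(Q_i)}\iint_{Q_i\times Q_i}\langle\nabla u(x)-\nabla u(x'),x-x'\rangle\d\lambda(x)\d\lambda(x')\\
&\le2\sqrt d\,r\,\|\lambda\|_\infty\sum_i\int_{Q_i}|\nabla u-a_i|\dx .
\end{align*}
Do not use the sup-oscillation of $\nabla u$. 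Instead, take $u$ convex on all of $\R^d$ with $\nabla u\in\textup{conv}(\supp\,\mu)$. Then $D^2u$ is a nonnegative Radon measure with $|D^2u|(B)\lesssim\sup|\nabla u|\,\mathscr H^{d-1}(\partial B)$ on a ball $B$ containing all cubes that meet $\supp\,\lambda$. The $BV$ Poincar\'e inequality $\int_{Q_i}|\nabla u-(\nabla u)_{Q_i}|\dx\lesssim r\,|D^2u|(Q_i)$ now gives $O(r^2)$, using only the upper density bound.

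A smaller point: your regulariser estimate relies on $\lambda(Q_i)\sim r^d$, which fails for cubes meeting $\partial\supp\,\lambda$. Compute directly instead, using $0\le\frac{\d\mu_i}{\d\mu}\le1$:
\begin{align*}
\int\Big(\frac{\d\pi^r}{\d P}\Big)^p\d P\le\sum_i\lambda(Q_i)^{2-p}\le N^{p-1},\qquad \int h_1\Big(\frac{\d\pi^r}{\d P}\Big)\d P\le-\sum_i\lambda(Q_i)\log\lambda(Q_i)\le\log N,
\end{align*}
where $N\lesssim r^{-d}$ counts the cubes meeting $\supp\,\lambda$.

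\textbf{Second gap: the extra claim for $p=1$.} Plugging $(f_0,g_0)$ into the dual yields $OT_{\e,h_1}\ge W_2^2-\e^2\int e^{\xi_0/\e^2-1}\d P$. That is a lower bound on the value, so it goes the wrong way. Set $H(\pi)=\int h_1(\frac{\d\pi}{\d P})\d P$ and $\xi_0=f_0(x)+g_0(y)-|x-y|^2\le0$. Then exactly
\begin{align*}
\int|x-y|^2\d\pi_\e-W_2^2(\lambda,\mu)=\int(-\xi_0)\d\pi_\e=OT_{\e,h_1}(\lambda,\mu)-W_2^2(\lambda,\mu)-\e^2H(\pi_\e).
\end{align*}
So you need a lower bound on $\e^2H(\pi_\e)$ matching the leading term $\frac d2\e^2\log(\e^{-2})$ of the upper bound, with the same constant. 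The bound $\int e^{\xi_0/\e^2-1}\d P\le1$ cannot provide this.

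The missing ingredient is the Gibbs variational inequality applied to a fraction $t\in(0,1)$ of the potential:
\begin{align*}
t\int(-\xi_0)\d\pi_\e+\e^2H(\pi_\e)\ge-\e^2\log\int e^{t\xi_0/\e^2}\d P\ge\frac d2\e^2\log\frac{t}{\e^2}-C\e^2 .
\end{align*}
The last step uses the bounded densities together with $-\xi_0=2(u(x)+u^\ast(y)-\langle x,y\rangle)\ge2|x-x_z|^2$ in Minty coordinates $z=x+y$, $x_z=(\textup{id}+\partial u)^{-1}(z)$. The map $(x,y)\mapsto(z,x-x_z)$ is a bi-Lipschitz change of variables with Jacobian of absolute value one.

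Once the first gap is closed, your construction with $r=\e$ gives the sharp upper bound $\int(-\xi_0)\d\pi_\e+\e^2H(\pi_\e)\le\frac d2\e^2\log(\e^{-2})+C\e^2$, with exactly the constant $\frac d2$. Subtracting the Gibbs lower bound from this and choosing $t=\frac12$ gives $\int(-\xi_0)\d\pi_\e\lesssim\e^2$.
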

This result under various regularity of the assumptions has been established by many authors. In the stated form it is a direct consequence of \cite{EcksteinNutz}. We refer to \cite{CCN} for an account of the literature and for sharp assumptions as well as asymptotic results.

Next suppose that $\lambda,\mu$ also admit Lebesgue densities, then Brenier's theorem supplies a optimal transport map $T=\nabla\phi$ for some convex potential $\phi$ so that $T_\#\lambda\mu$. The convergence  rate of ROT above also gives an estimate of the $L^2$ distance of the support of $\pi_\e$ to the graph of $T$, thanks to the following  Lemma, cf. \cite[Lemma 3.8]{MalamutSylvestre}.

\begin{lemma}\label{lem:EnergyConvergence}
Suppose $T$ is globally Lipschitz.  Then every $\pi\in\Pi(\lambda,\mu)$ satisfies
\begin{align*}
 \int|y-T(x)|^2\d\pi
   \leq \|\nabla T\|_{L^\infty}\left(\int|x-y|^2\d\pi-W_2^2(\lambda,\mu)\right).
\end{align*}
\end{lemma}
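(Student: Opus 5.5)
The plan is to prove this by a convex duality argument. It uses the Brenier potential of $T$ and the fact that a convex function with $L$-Lipschitz gradient has a $\frac1L$-strongly convex Legendre transform.

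\emph{Setup.} Write $T=\nabla u$ with $u$ convex (Brenier's theorem), so that $T_\#\lambda=\mu$. Put $L:=\|\nabla T\|_{L^\infty}$. I read the hypothesis ``$T$ globally Lipschitz'' as saying that $u$ is a convex function on all of $\R^d$ whose gradient is $L$-Lipschitz. If $T$ is only given on $\supp\,\lambda$, one must first extend $u$ to such a function on $\R^d$; I take this as part of the hypothesis. Let $u^\ast$ be the convex conjugate. The standard duality (Baillon--Haddad / Fenchel) states that $u$ is $L$-smooth if and only if $u^\ast$ is $\frac1L$-strongly convex. Since $x\in\partial u^\ast(T(x))$, this gives, for all $x,y$,
\begin{align*}
u^\ast(y)\geq u^\ast(T(x))+\langle x,y-T(x)\rangle+\frac1{2L}|y-T(x)|^2 .
\end{align*}
Combined with the equality case of Fenchel--Young, $u(x)+u^\ast(T(x))=\langle x,T(x)\rangle$, this becomes
\begin{align*}
\langle x,y\rangle\leq u(x)+u^\ast(y)-\frac1{2L}|y-T(x)|^2 .
\end{align*}

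\emph{Reduction of the energy gap.} Let $\pi\in\Pi(\lambda,\mu)$. The second moments are fixed by the marginals, and $\int|T|^2\d\lambda=\int|y|^2\d\mu$. Since $W_2^2(\lambda,\mu)=\int|x-T(x)|^2\d\lambda$, expanding the squares gives
\begin{align*}
\int|x-y|^2\d\pi-W_2^2(\lambda,\mu)=2\int\big(\langle x,T(x)\rangle-\langle x,y\rangle\big)\d\pi(x,y).
\end{align*}
Insert $\langle x,T(x)\rangle=u(x)+u^\ast(T(x))$ and the upper bound on $\langle x,y\rangle$ from the setup. The integrand is then bounded below by
\begin{align*}
u^\ast(T(x))-u^\ast(y)+\frac1{2L}|y-T(x)|^2 .
\end{align*}
Finally, $\int u^\ast(T(x))\d\pi=\int u^\ast\d\mu=\int u^\ast(y)\d\pi$ because $T_\#\lambda=\mu$. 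So these terms cancel, and we obtain
\begin{align*}
\int|x-y|^2\d\pi-W_2^2(\lambda,\mu)\geq\frac1L\int|y-T(x)|^2\d\pi ,
\end{align*}
which is the claim.

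\emph{Main obstacle.} The only delicate point is justifying the strong convexity of $u^\ast$ globally, since the subgradient inequality is needed at every $y\in\supp\,\mu$. This is where a globally defined $L$-smooth convex $u$ is essential. All integrals are finite because the supports are compact, so the cancellation step is legitimate.
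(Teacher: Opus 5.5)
Your argument is correct and is the standard proof of this estimate. The paper gives no proof of its own but cites \cite[Lemma 3.8]{MalamutSylvestre}, which rests on the same pointwise lower bound $u(x)+u^\ast(y)-\langle x,y\rangle\geq\frac{1}{2L}|y-T(x)|^2$ for the duality gap of an $L$-smooth Brenier potential, integrated against $\pi$. You were right to make explicit that ``globally Lipschitz'' must mean $u$ is convex with $L$-Lipschitz gradient on all of $\R^d$, since a bound on $\nabla T$ only over a nonconvex $\supp\,\lambda$ would not by itself give that inequality.
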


\subsection{Affine changes of coordinates}\label{sec:changeOfCoordinates}
We recall that the $\textup{ROT}_{\e,h}$ problem is affine invariant in the following sense. Consider a tuple
\begin{align*}
s:=(A,b,\gamma,\kappa)
\end{align*}
where $A\in\R^{d\times d}$ is positive definite and symmetric, $b\in\R^d$ and $\gamma,\kappa>0$. Given a coupling $\pi\in\Pi(\lambda,\mu)$ and $s$ given above, there is a coupling $\pi_s\in\Pi(\lambda_s,\mu_s)$ defined by 
\begin{align*}
\lambda_s:=\kappa (Q_1)_\#\lambda,\quad \mu_s:=\kappa (Q_2)_\#\mu,\quad \pi_s:=\kappa Q_\#\pi
\end{align*}

where $Q:\R^d\times\R^d\to \R^d\times\R^d$ is an affine map given by
\begin{align*}
Q(x,y)=\left(Q_1(x),Q_2(y)\right):=(A^{-1}x,\gamma A(y-b)).
\end{align*}
We remark that composing the change of coordinates from two tuples $s_1$ and $s_2$ gives a third change of coordinates $s'$.

The problem $OT_{\e,h_p}$ is affine invariant in the sense that if $\pi_\e$ is optimal for $OT_{\e,h_p}(\lambda,\mu)$, then $\pi_{\e,s}$ is optimal for  $OT_{\e\kappa^\frac{p-1} 2\gamma^{\frac{1}{2}},h_p}(\lambda_s,\mu_s)$.

\subsection{Boundary normalization}\label{SS:bdy-normalization}
In this section we work under the assumptions of Theorem \ref{thm:main}. 

 Let $\pi_\e$ be a solution to \eqref{eq:regularisedProblem}. We show how to reduce to Assumption \ref{ass:tangency}. Indeed, fix any $x_0\in \partial \supp\,\lambda$. By translation, we may assume $x_0=0$. Consider the affine change of coordinates associated with
\begin{align*}
s=(A,b,\gamma,\kappa):=\left( \nabla T(0)^{\frac 1 2}, T(0),\left(\frac{\mu(T(0))}{\lambda(0)\det(\nabla T(0))}\right)^{\frac 1 d}, \frac{1}{\lambda(0)\det(\nabla T(0)^\frac 1 2)} \right).
\end{align*}

Correspondingly, set $ T_s(\cdot):= Q_2(T(\cdot))$, $ \pi_{\e,s} =\kappa Q_{\#} \pi_\e$, $\lambda_s = \kappa(Q_1)_\# \lambda$, $ \mu_s = \kappa(Q_2)_\# \mu$. Then $T_s$ is optimal for the optimal transport problem between $ \lambda_s$ and $ \mu_s$ and $\pi_{\e,s}$ is optimal for  for  $\textup{ROT}_{\e\kappa^\frac{p-1} 2\gamma^{\frac{1}{2}},h}(\lambda_s,\mu_s)$.  
Moreover, 
\begin{align*}
T_s(0)=0,\:\nabla T_s(0)=\textup{Id}, \quad \lambda_s(0)=\mu_s(0)=1.
\end{align*}
Consequently, with $\Omega_0 = \supp\,\lambda_s$ and $\Omega_1=\supp\,\mu_s$, Assumption \ref{ass:tangency} is satisfies. Indeed, let $\gamma(t)$ be a curve on $\partial \Omega_0=\partial Q_1(\supp\,\lambda)$ with $\gamma(0)=0$. Then $T_s(\gamma(t))$ is a curve on $\partial \Omega_1=\partial Q_2(\supp\,\mu)$ with $T_s(\gamma(0))=0$. Moreover,
\begin{align*}
\restr{\frac{\dd}{\dd t} T_s(\gamma(t))}{t=0} = \nabla T_s(\gamma(0))\gamma'(0)=\gamma'(0).
\end{align*}
Thus, $T_s$ preserves tangent vectors at $0$ and the normals of $\Omega_0$ and $\Omega_1$ at $0$ agree.

\subsection{Boundary \texorpdfstring{$L^\infty$}{} bound for ROT: \texorpdfstring{$1<p\leq 2$}{}}\label{sec:Linfty}
We  establish a boundary $L^\infty$ bound for \eqref{eq:regularisedProblem} when $1<p\leq 2$. We denote $\Omega_0 = \supp\,\lambda$, $\Omega_1=\supp\,\mu$ and assume Assumption \ref{ass:tangencyLocal} holds. We note that in view of Section \ref{SS:bdy-normalization}, this assumption is not restrictive.
As it is important to keep track of the precise constant at times in the proof, we introduce the following notations:
\begin{align*}
	w_d:=|B_1| \text{ with $B_1$ the unit ball in }\R^d,\\
	E=E(\pi,2R,b)=\frac{1}{(2R)^{d+2}}\int_{B_{2R}\times\R^d}|y-x-b(x)|^2d\pi,\\
	L_1:=\mbox{Lip}(g_0,B_{2R})+\mbox{Lip}(b,B_{2R}),\\
	D:=\text{uniform density lower bound of }\lambda\mbox{ in }B_{2R}
\end{align*}
where $b$ is a given bi-Lipschitz vector field.
As $g$ is Lipschitz, by geometric considerations, there exists $\theta=\theta(L_1), C_0=C_0(L_1)$ such that for $e\in\mathbb{S}^{d-1}$  and $x\in\Omega_0\cap Q_R$, it holds that 
\begin{align}\label{c40}
	\langle e,e_1\rangle\geq \theta,\, r\leq\frac{1}{1+C_0}\quad \Rightarrow \quad Q_{C_0rR}(x+rRe)\subset \Omega_0\cap Q_{2R}.
\end{align}

\begin{proposition}\label{prop:LinftyPartial}
Assume $h\colon [0,\infty)\to \R$ satisfies Assumption \ref{ass:h} and $h'(0)=0$. Let $p\in (1,2]$. Suppose $\pi$ is a solution of $OT_{\e,h}(\lambda,\mu)$. 

There exists a critical length scale $R_c=R_c(\epsilon,d,L_1)$ and a modulus of continuity $\tau:(0,\infty)\to(0,\infty)$, such that for any $R>R_c$ for which Assumption \ref{ass:tangencyLocal} holds in $Q_{2R}$, the following hold.
	\begin{enumerate}
	    \item Partial control up to the boundary: For any $e\in\mathbb{S}^{d-1}$ with $\langle e,e_1\rangle\geq \theta$ and any bi-Lipschitz vector field $b$,
	\begin{align*}
		\sup_{(x,y)\in (\Omega_0\cap Q_R)\times\R^d\cap \supp \pi}\frac{(y-x-b(x))\cdot e}{R}\lesssim \max\left(E^{\frac{1}{2}},E^{\frac{1}{d+2}},\tau(\epsilon)/R \right)
	\end{align*}
	where the implicit constant depends on $d,D,L_1$.
	\item Interior estimate: If $x\in \Omega_0\cap Q_R$ satisfies
	\begin{align}\label{c48}
		x\cdot e_1\geq RL_1+R\max\left( E^{\frac 1 2},E^{\frac{1}{d+2}}, \tau(\epsilon)/R \right),
	\end{align}
	then for any $y$ such that $(x,y)\in (\Omega_0\cap B_R)\times\R^d\cap \supp\pi$,
	\begin{align*}
		\frac{|y-x-b(x)|}{R}\lesssim \max\left( E^{\frac 1 2},E^{\frac{1}{d+2}},\tau(\epsilon)/R \right)
	\end{align*}
	where the implicit constant depends on $d,D,Lip(b,Q_{2R})$.
    \end{enumerate}
    If $h=h_p$ for $p>1$, then $R_c\simeq \e^\frac 2 {d(p-1)+2}$.
\end{proposition}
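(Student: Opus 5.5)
We adapt the interior $L^\infty$ argument of \cite{GvalaniKoch1} to the boundary; the new input is the cone condition \eqref{c40}. The core idea is a \emph{monotonicity/cyclical-monotonicity plus mass} argument. If some $(x,y)\in\supp\pi$ with $x\in\Omega_0\cap Q_R$ has a large displacement $(y-x-b(x))\cdot e = MR$, we produce a set of points $x'$ of definite $\lambda$-mass on which all $(x',y')\in\supp\pi$ have displacement of the same order. Integrating then gives $E\gtrsim$ (mass)$\cdot M^2$, which contradicts smallness of $E$ unless $M\lesssim \max(E^{1/2},E^{1/(d+2)})$. The obstruction at the boundary is that the natural test region around $x$ may leave $\Omega_0$. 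This is exactly why we only move in directions $e$ with $\langle e,e_1\rangle\ge\theta$: for such $e$ the cube $Q_{C_0rR}(x+rRe)$ lies in $\Omega_0\cap Q_{2R}$ by \eqref{c40}, so it carries $\lambda$-mass $\gtrsim D(rR)^d$.

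\textbf{Step 1 (quantitative cyclical monotonicity for ROT).} For a regularised plan, $\supp\pi$ is not $c$-cyclically monotone. We replace this by the approximate monotonicity from \cite{GvalaniKoch1}. Given $(x,y)\in\supp\pi$ and $x'$ near $x$, there exists $y'$ with $(x',y')\in\supp\pi$ and
\begin{align*}
\langle y-y',x-x'\rangle\ge -\tau(\e).
\end{align*}
More precisely, the set of such $y'$ carries definite conditional mass. This follows from the characterisation $\supp\pi_\e=\{\xi\ge \e^2h'(0)\}=\{\xi\ge0\}$ (using $h'(0)=0$) together with the marginal identity $\int (h')^{-1}(\xi/\e^2)\,d\mu=1$ in Proposition \ref{sec:basicPropertiesPotentials}. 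Adding $\xi(x',y')$ and $\xi(x,y)$ and comparing with $\xi(x,y')$, $\xi(x',y)$ controls the defect. The defect $\tau(\e)$ is the length scale below which conditional supports cannot be localised. For $h_p$, a scaling computation balances $\e^2$ against mass $r^d$ and displacement $r^2$ ($r^2\sim \e^2 r^{-d(p-1)}$), giving $R_c\simeq\e^{2/(d(p-1)+2)}$.

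\textbf{Step 2 (partial bound (i)).} Set $M:=\sup (y-x-b(x))\cdot e/R$ over the relevant pairs. Take $x'\in Q_{C_0rR}(x+rRe)$ with $r\sim M$, truncated at $1/(1+C_0)$. Step 1 gives $(x'-x)\cdot(y'-y)\gtrsim -\tau$. The Lipschitz bound on $b$ then gives $(y'-x'-b(x'))\cdot e\ge (M-Cr L_1-r)R-\tau/(rR)$ for $y'$ in a set of positive conditional measure; with the parameters chosen this is $\gtrsim MR$. Integrating $|y'-x'-b(x')|^2\gtrsim M^2R^2$ over $x'$ in the cube, of $\lambda$-mass $\gtrsim D(MR)^d$, gives
\begin{align*}
E R^{d+2}\gtrsim M^{d+2}R^{d+2}.
\end{align*}
Hence $M\lesssim E^{1/(d+2)}$ when $M\lesssim1$. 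In the truncated regime $r=1/(1+C_0)$ the same computation gives $M\lesssim E^{1/2}$. If $\tau/(rR)$ dominates, we get $M\lesssim\tau/R$.

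\textbf{Step 3 (interior bound (ii)).} Under \eqref{c48}, $x$ is far enough from $\partial\Omega_0$ that the cubes from \eqref{c40} fit for every direction $e\in\mathbb S^{d-1}$, not just those near $e_1$. We rerun Step 2 for all $\pm e$ and combine to bound $|y-x-b(x)|$. The main obstacle is Step 1: showing that the ROT defect is a pure function $\tau(\e)$, uniform in $R>R_c$, and that the matching $y'$ lie in a set of positive conditional measure rather than at a single point. We would try first the integrated version, averaging the $\xi$-inequality against $\pi(\cdot\mid x')$.
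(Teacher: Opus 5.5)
Your overall architecture is the contrapositive of the paper's argument and can be made to work. A large displacement at $x$ is forced onto the pairs $(x',y')$ lying over the cone-admissible cube $Q_{C_0rR}(x+rRe)$, and this contradicts the smallness of $E$. The genuine gap is your Step 1, which carries all the regularised-transport content: it is misformulated and, as you note yourself, unproved.

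\textbf{The gap in Step 1.} A pointwise statement of the form ``for every $x'$ near $x$ there is $y'$ (or a set of definite $\pi(\cdot\mid x')$-mass) with $\langle y-y',x-x'\rangle\ge-\tau(\e)$'', with a defect depending only on $\e$, does not follow from what is available. Adding the $\xi$'s only gives $\xi(x,y')+\xi(x',y)\ge-\Delta$. The cross-densities $\frac{\dd\pi}{\dd P}(x',y)$ and $\frac{\dd\pi}{\dd P}(x,y')$ are controlled only in $L^1$ by the marginal identities, not pointwise.

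What is true, and what the paper uses, is a mass-counting version:
\begin{enumerate}
\item If $\Delta\le0$, then $(h')^{-1}(-\Delta/(2\e^2))\le\max\bigl(\frac{\dd\pi}{\dd P}(x',y),\frac{\dd\pi}{\dd P}(x,y')\bigr)$.
\item Since $\int\frac{\dd\pi}{\dd P}(x',y)\,\dd\lambda(x')=\int\frac{\dd\pi}{\dd P}(x,y')\,\dd\mu(y')=1$, Markov's inequality shows the pairs where either cross-density exceeds $K$ have $\pi$-mass at most $2/K$. Equivalently, any $B$ with $\pi(B)>0$ contains pairs with both cross-densities $\le2/\pi(B)$.
\end{enumerate}
So approximate monotonicity holds only off an exceptional set. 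To make that set negligible against the mass $\sim(rR)^d$ of your test region you must take $K\sim(rR)^{-d}$. This means accepting a defect $-\Delta\lesssim\e^2h'(C(rR)^{-d})$ that depends on $r$. The fixed scale $\tau(\e)\simeq R_c$ appears only after coupling $rR\sim\ell:=\langle y-x-b(x),e\rangle$ and solving $\ell^d(h')^{-1}(c\ell^2/\e^2)=C$. Your Step 2 also cannot integrate a bound that holds for one $y'$ per $x'$. It needs the bound on a fixed fraction of $\pi\bigl(Q_{C_0rR}(x+rRe)\times\R^d\bigr)$, which is exactly what the averaged statement supplies.

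\textbf{A second, more local problem.} In (i) only $\langle v,e\rangle$ is controlled, with $v=y-x-b(x)$, and $|v|$ may be huge. For $x'=x+rRe+w$ the term $\langle w,v\rangle$ can be as negative as $-C_0rR|v|$. This destroys your claimed lower bound on $(y'-x'-b(x'))\cdot e$ over the whole cube. You need to restrict to the half-cube $\{\langle x'-(x+rRe),v\rangle\ge0\}$, as the paper does in $B(x,y)$. There you obtain $|y'-x'-b(x')|\gtrsim\langle v,e\rangle$ (not the $e$-component), which is enough for the energy bound.

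With both fixes the argument runs as follows. Pairs over the half-cube with $|v'|\le\bar rR$ satisfy $-\Delta\ge rR\langle v,e\rangle$. Hence their mass is at most $2/(h')^{-1}(rR\langle v,e\rangle/(2\e^2))$, which is $\ll(rR)^d$ once $\langle v,e\rangle\gg\tau(\e)$. So most of the half-cube mass has $|v'|>\bar rR$, contradicting Markov's inequality for $E$. This is the paper's Substeps 1--3 read backwards.

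In Step 3, note that away from $e_1$ the admissible $r$ is capped by the margin in \eqref{c48}. You should therefore take $r\sim\max(E^{1/(d+2)},\tau(\e)/R)$ rather than the truncated choice from Step 2.
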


\begin{proof}The proof proceeds in a number of steps.

\step 1 Two facts about optimal plans for ROT.
First, recall monotonicity of $\supp\,\pi$: for $(x,y),(x',y')$ write
\begin{align*}
	\Delta=\Delta(x,y,x',y')
	:=|x-y'|^2+|x'-y|^2-|x-y|^2-|x'-y'|^2,
\end{align*}
Then  for  any $(x,y),\,(x',y')\in\supp\pi$ with $\Delta\leq0$, (see \cite[Equation (7)]{GvalaniKoch1},
\begin{align}\label{c44}
	(h')^{-1}\left(\frac{-\Delta}{2\epsilon^2}\right)\leq \max\left(\frac{d\pi}{dP}(x',y),\frac{d\pi}{dP}(x,y')\right).
\end{align}
The second fact claims existence of points with bounded density:  Let $(x,y)\in \supp\pi$ and $B\subset\R^d\times\R^d$ be such that $\pi(B)>0$, then 
\begin{align}\label{c43}
	\pi\left(\{(x',y')\in B\mid \max\big(\frac{d\pi}{dP}(x,y'),\frac{d\pi}{dP}(x',y) \big)\}\leq \frac{2}{\pi(B)} \right)>0.
\end{align}
Indeed,  it suffices to show that $\pi(B_1)<\frac{\pi(B)}{2}$ and $\pi(B_2)<\frac{\pi(B)}{2}$ where
\begin{align*}
	B_1:=\{(x',y')\in B\mid \frac{d\pi}{dP}(x',y)>\frac{2}{\pi(B)} \},\quad B_2:=\{(x',y')\in B\mid \frac{d\pi}{dP}(x,y')>\frac{2}{\pi(B)}\}.
\end{align*}
We next show that $\pi(B_1)<\frac{\pi(B)}{2}$ while the argument for $B_2$ is the same. Let $A_1\subset\R^d$ be the projection to the first marginal of $B_1$ so that $\pi(B_1)\leq \pi(A_1\times\R^d)\leq\lambda(A_1)$. Now the conclusion follows from the optimality condition
\begin{align*}
	1=\int \frac{d\pi}{dP}(x',y)\,d\lambda(x')\geq\int_{A_1}\frac{d\pi}{dP}(x',y)\,d\lambda(x')>\frac{2}{\pi(B)}\lambda(A_1)\geq\frac{2}{\pi(B)}\pi(B_1).
\end{align*}

\step 2 Proof of (i).
Fix $(x,y)\in\supp\pi$ with $x\in\Omega_0\cap Q_R$ and $e\in\mathbb{S}^{d-1}$ with $\langle e, e_1\rangle\geq \theta$. Let $0<r<\frac{1}{1+C_0}$ and $\bar r>0$ to be fixed below. We also use the shorthand notation
\begin{align*}
	v:=y-x-b(x),\qquad v':=y'-x'-b(x').
\end{align*}
We define 
\begin{align*}
	B_{C_0rR}&:=B_{C_0rR}(x+rRe)\overset{\eqref{c40}}{\subset} \Omega_0\cap Q_{2R},\\
	B(x,y)&:=\Bigl\{(x',y')\in B_{C_0rR}\times\R^d\mid \
	|y'-x'-b(x')|\le \bar r R,\quad
	\langle x'-(x+rRe),v\rangle\geq 0\Bigr\}.
\end{align*}
\substep 1 Estimate of $\Delta$. We show that for any $(x',y')\in B(x,y)$,
\begin{align}\label{c41}
	-\Delta\geq rR \langle y-x-b(x),e\rangle\quad \mbox{provided}\quad \langle y-x-b(x),e\rangle\geq 2(L_1+1)(C_0+1)^2(r+\bar r)R.
\end{align}

For $(x',y')\in B(x,y)$, we have
\begin{align*}
	\Delta=2\ip{x-x'}{y-y'}
	=2\ip{x-x'}{v-v'}+2\ip{x-x'}{(x-x')+(b(x)-b(x'))}.
\end{align*}
Noting $x-x'=(x-x'+rRe)-rRe$ in the first term and estimating the last term using that $b$ is Lipschitz, we obtain that
\begin{align*}
	\Delta\leq 2\ip{x-x'+rRe}{v-v'}+2\ip{-rRe}{v-v'}+2(L_1+1)(C_0+1)^2
	(rR^2).
\end{align*}
Next, by definition of $B(x,y)$,
\begin{align*}
	\ip{x-x'+rRe}{v}\le0,\qquad -\ip{x-x'+rRe}{v'}\le C_0 r\bar r R^2,\\
	2\ip{-rRe}{v-v'}=-2rR\langle v,e\rangle+2rR\langle e,v'\rangle\le-2rR\langle v,e\rangle+2r\bar rR^2.
\end{align*}
Collecting estimates gives
\begin{align*}
	\Delta\leq\ -2rR\langle v,e\rangle+2(C_0+1)r\bar rR^2+2(L+1)(1+C_0)^2(rR)^2,
\end{align*}
from which \eqref{c41} follows.

\substep 2 Lower bound on $\pi(B(x,y))$. We show that with an appropriate choice of $r,\bar r$, 
\begin{align*}
	\pi(B(x,y))\sim (rR)^d.
\end{align*}
Let $H:=\{z\mid\langle z-(x+rRe),v\rangle\ge0\}$, so
$|B\cap H|=\omega_d(C_0rR)^d/2$. Since $B\cap H\subseteq\Omega_{0}\cap B_{2R}$, we have
\begin{align*}
	\pi\big((B\cap H)\times\R^d\big)=\lambda(B\cap H)\geq D\omega_d (C_0rR)^d/2.
\end{align*}
Thus by Markov's inequality,
\begin{align*}
	\pi(B_{C_0rR}\times\R^d\backslash B(x,y))\leq\frac{1}{(\bar rR)^2}\int_{B_{C_0rR}\times\R^d}|v'|^2\,d\pi
	\ \le\ \frac{(2R)^{d+2}E}{(\bar rR)^2}
	\ =\ \frac{2^{d+2}R^d E}{\bar r^2}.
\end{align*}
Choosing $\bar r$ to be
\begin{equation}\label{c42}
	\bar r=\Bigl(\frac{2^{\,d+4}}{D\,C_0^{\,d}\,\omega_d}\Bigr)^{1/2} r^{-d/2}E^{1/2},
\end{equation}
we obtain that $2^{d+2}R^d E/\bar r^2 \leq D\omega_d(C_0rR)^d/4$. Hence, we find that
\begin{align*}
	\pi\bigl(B(x,y)\bigr)\geq
	\frac{1}{4} D C_0^d\omega_d (rR)^d \gtrsim (rR)^d .
\end{align*}
The opposite inequality is obvious.

\substep 3 Conclusion. By Substep 1 and 2, we know that whenever $(x,y)$ satisfies \eqref{c41} with the choice of $r,\bar r$ also satisfying \eqref{c42}, by \eqref{c44} and \eqref{c43} we can find $(x',y')\in B(x,y)$ such that for some $C<\infty$,
\begin{align}\label{c45}
	(rR)^d(h')^{-1}\left(\frac{rR(y-x-b(x))\cdot e}{2\epsilon^2} \right)\leq C.
\end{align}

We begin by choosing
\begin{align*}
	\bar r=E^{\frac 1 2},\quad\mbox{hence by \eqref{c42}}\quad r=K_1(d,D):=\left(\frac{2^{d+4}}{DC_0^dw_d} \right)^{\frac{1}{d}}.
\end{align*}
Therefore, if we assume
\begin{align}\label{c46}
	\langle y-x-b(x),e\rangle\geq 2(L_1+1)(C_0+1)^2(K_1+E^{\frac 1 2})R\geq K_2rR,
\end{align}
with $K_2=K_2(d,L_1):=2(L_1+1)(C_0+1)^2$, then \eqref{c45} holds, and consequently by monotonicity of $(h')^{-1}$ and the fact that $R\geq R_c$, we see that 
\begin{align*}
	(rR_c)^d(h')^{-1}\left(\frac{K_2(rR_c)^2}{2\epsilon^2} \right)\leq C.
\end{align*}
Now we choose $R_c$ such that it satisfies
\begin{align*}
	(rR_c)^d(h')^{-1}\left(\frac{K_2(rR_c)^2}{2\epsilon^2} \right)=2C.
\end{align*}
This provides a contradiction and we conclude that the opposite of \eqref{c46} must be true:
\begin{align}\label{c47}
	\langle y-x-b(x)), e\rangle\leq K_2 (K_1+E^{\frac 1 2})R\quad\text{for}\quad R\geq R_c.
\end{align}

If $E\geq 1$, \eqref{c47} completes the proof. Hence, we may assume that $E\leq 1$ and that \eqref{c47} holds. We choose now
\begin{align*}
	r&=\frac{(y-x-b(x))\cdot e}{R}K_3\mbox{ with } K_3=\min\{\frac{1}{K_2(1+K_1)(1+C_0)},\frac{1}{4K_1(L_1+1)(C_0+1)^2} \},\\
	\bar r&=K_1^{\frac d 2}\left(\frac{(y-x-b(x))\cdot e}{R} \right)^{-\frac d 2}E^{\frac{1}{2}}.
\end{align*}
Now we assume that \eqref{c41} holds for $(x,y)$, then the choice of $K_3$ ensures that $r\leq \frac{1}{1+C_0}$ and that if \eqref{c41} holds, then 
\begin{align*}
	\frac{\langle y-x-b(x),e\rangle}{R}\geq K_4(d,D,L_1) E^{\frac{1}{d+2}}\text{ with } K_3:=\left(\frac{2(L_1+1)(C_0+1)^2K_1^{\frac d 2}}{1-2(L_1+1)(C_0+1)^2K_1K_3} \right)^{\frac{2}{d+2}}.
\end{align*}
Therefore, by \eqref{c45} we have that
\begin{align*}
	K_3^{d}((y-x-b(x))\cdot e)^{d}(h')^{-1}\left(\frac{K_3\big((y-x-b(x))\cdot e\big)^2}{2\epsilon^2} \right)\leq C.
\end{align*}
Consequently, if we define $\tau(\epsilon)$ to be the solution of
\begin{align*}
	K_3^d (\tau(\epsilon))^{d}(h')^{-1}\left(\frac{K_3(\tau(\epsilon))^2}{2\epsilon^2} \right)=C,
\end{align*}
we obtain by monotonicity that
\begin{align*}
	\langle y-x-b(x),e\rangle\leq \tau(\epsilon),
\end{align*}
which finishes the proof.

\step 3 Proof of (ii). Let $(x,y)\in\supp\pi\cap (Q_R\cap \Omega_0)\times\R^d$ be given. We first observe that we may assume  $E\leq1$. Indeed, if $E\ge1$ the proof of \cite[Theorem 6]{GvalaniKoch1} applies without change: for $r<\frac 1 3$ and $x\in \Omega_0\cap Q_R$ satisfying \eqref{c48},
\begin{align*}
B_{rR}\left(x+2rR\frac{y-x-b(x)}{|y-x-b(x)|}\right)\subset\Omega_0\cap B_{2R},
\end{align*}
since  by \eqref{c48}, $x$ is of distance at least $RE^{1/2}\ge R$ away from $\p\Omega_0$. By exactly the same proof as \cite[large transport regime in Theorem 6]{GvalaniKoch1}, it follows that
\begin{align*}
\frac{|y-x-b(x)|}{R}\lesssim E^{\frac{1}{2}}\quad\text{whenever}\quad E\geq 1.
\end{align*}

We may therefore assume $E\leq 1$ in the following. We also set
\begin{align*}
\delta:=RL_1+R\,\max\left(E^{\frac1{d+2}},\frac{\tau(\epsilon)}{R}\right)\mbox{ so that assumption \eqref{c48} is }\langle x,e_1\rangle\ge\delta.
\end{align*}
 For $r,\bar r$ to be chosen, define 
\begin{align*}
B_{rR}&:=B_{rR}\left(x+2rR\frac{y-x-b(x)}{|y-x-b(x)|}\right),\\
B(x,y)&:=\{(x',y')\in B\times \R^d\mid |y'-x'-b(x')|\leq\bar rR \}.
\end{align*}
We shall always chose $r$ such that $r<\min\{\frac{1}{3},\frac{1}{3}\max(E^{\frac{1}{d+2}},\frac{\tau}{R})\}$ so that $B_{rR}\subset Q_{2R}\cap \Omega_0$.

The computations of \cite[Step 2, Theorem 6]{GvalaniKoch1} give
\begin{align}
	&\Delta\le -rR|y-x-b(x)|\quad \forall\,(x',y')\in B(x,y)\label{c49}
    \end{align}
    as long as $|y-x-b(x)|\ge 18(L_1+1)rR+6\bar rR$. Further,
    \begin{align}
    \bar r=\Bigl(\frac{2\cdot 3^{\,d+2}}{D\,\omega_d}\Bigr)^{1/2}r^{-d/2}E^{1/2} 	\quad\Rightarrow\quad \pi\left(B(x,y)\right)\gtrsim(rR)^d. \label{c50}
\end{align}
As before, combining \eqref{c44} and \eqref{c43}, for any $r,\bar r$ and $(x,y)$ satisfying the above two constraints, we have
\begin{equation}\label{c51}
	(rR)^d(h')^{-1}\left(\frac{rR|y-x-b(x)|}{2\epsilon^2}\right) \le C 
\end{equation}
for some $C<\infty$.
Also let $P:=\dfrac{1}{54(L_1+1)}$.

\substep 1 The regime $E^\frac 1 {d+2}\geq \frac{\tau(\epsilon)}{R}$. In this regime, we show that
\begin{align*}
|y-x-b(x)|\leq (1+K_5)RE^{\frac{1}{d+2}}\quad\text{where}\quad K_5:=6\Bigl(\dfrac{2\cdot3^{d+2}}{D\omega_d}\Bigr)^{1/2}\bigl(54(L_1+1)\bigr)^{d/2}.
\end{align*}

Assume for contradiction that the above is not true, then we choose $r,\bar r$ in \eqref{c50} to be
\begin{align*}
r=\frac{E^{\frac1{d+2}}}{54(L_1+1)},\quad r=\frac{K_5}{6}E^{\frac1{d+2}}.
\end{align*}
This choice as well the contradicting assumption implies that
\begin{align*}
|y-x-b(x)|\geq (1+K_5)RE^{\frac{1}{d+2}}\geq 18(L_1+1)rR+6\bar rR,
\end{align*}
which means that \eqref{c51} holds. Noting that $rR=PE^{\frac 1{d+2}}R$ combined with the assumption that $|y-x-b(x)|\geq (1+K_5)E^{\frac{1}{d+2}}R$ allows us to conclude
\begin{align*}
(PRE^{\frac1{d+2}})^d
(h')^{-1}\left(\frac{P(1+K)(RE^{\frac1{d+2}})^2}{2\epsilon^2}\right)\leq C.
\end{align*}
Since we are in the regime $RE^{\frac1{d+2}}\ge\tau(\epsilon)$ and
$z\mapsto (Pz)^d(h')^{-1}\!\bigl(\tfrac{P(1+K)z^2}{2\eps^2}\bigr)$ is increasing, we find that
\begin{align*}
(P\tau(\epsilon))^d\,
(h')^{-1}\left(\frac{P(1+K)\tau(\epsilon)^2}{2\epsilon^2}\right) \le C .
\end{align*}
Now we define  $\tau(\epsilon)$ to be the unique solution of
\begin{align*}
(P\tau(\epsilon))^d\,
(h')^{-1}\left(\frac{P(1+K)\tau(\epsilon)^2}{2\epsilon^2}\right)=2C.
\end{align*}
This provides the desired contradiction.

\substep 2. The regime  $\frac{\tau(\epsilon)}{R}\geq E^{\frac1{d+2}}$. In this regime, we claim that
\begin{align*}
|y-x-b(x)|\leq (1+K_6)\tau(\epsilon)\quad\text{for some}\quad K_6=K_6(d,D,L_1)
\end{align*}
Indeed, the proof argues via contradiction and can be obtained by verbatim replacing $E^{\frac{1}{d+2}}$ by $\frac{\tau(\epsilon)}{R}$ in the proof of Step 1. We omit the details.

Finally, we remark that in the case $h=h_p$, explicit calculations easily show that one may take $R_c\simeq R^\frac 2 {d(p-1)+2}$.
\end{proof}

In order to obtain a full $L^\infty$-bound from Proposition \ref{prop:LinftyPartial}, we need to impose a variant of the topological condition Assumption \ref{ass:topological} in $Q_{2R}$. We assume that for some $C>0$,
		\begin{align}\label{eq:topological2}
			(x,y)\in (\Omega_0\cap Q_R)\times\R^d\cap \supp\pi\quad\Rightarrow\quad |y-x-b(x)|\leq CR \quad\text{ and }\quad \langle b(x),e_1\rangle \leq CL_1.
	\end{align}
\begin{corollary}\label{cor:Linfty}
Suppose the assumptions of Proposition \ref{prop:LinftyPartial} hold. Assume in addition \eqref{eq:topological2} applies. If $R_c<R$ and Assumption \ref{ass:tangencyLocal} applies in $Q_{2R}$, then
\begin{align*}
    \sup_{(x,y)\in ((\Omega_0\cap Q_R)\times \R^d)\cap\supp\,\pi_\e}\frac{|y-x-b(x)|}{R}\lesssim \max\left(E^\frac 1 2, E^\frac 1 {d+2}, \tau(\e)/R\right)+L_1,
\end{align*}
where the implicit constant depends on $d, D$, $C,L_1$  and $Lip(b,B_{2R})$.
\end{corollary}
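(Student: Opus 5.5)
We combine the two parts of Proposition \ref{prop:LinftyPartial} with the a priori bound \eqref{eq:topological2}. Fix $(x,y)\in\supp\,\pi_\e$ with $x\in\Omega_0\cap Q_R$, and write $v:=y-x-b(x)$ and $M:=\max(E^{1/2},E^{1/(d+2)},\tau(\e)/R)$. We distinguish two cases according to the distance of $x$ from $\partial\Omega_0$, measured by $x\cdot e_1$.

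\emph{Interior case.} Suppose $x\cdot e_1\ge RL_1+RM$. Then \eqref{c48} holds, and Proposition \ref{prop:LinftyPartial}(ii) gives $|v|/R\lesssim M$ directly.

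\emph{Boundary layer.} Suppose $x\cdot e_1<RL_1+RM$. Here only the one-sided estimate (i) is available. It controls $\langle v,e\rangle/R\lesssim M$ uniformly for every unit vector $e$ in the cone $\{\langle e,e_1\rangle\ge\theta\}$. We then write an arbitrary unit direction in terms of cone directions:
\begin{itemize}
\item Any $w\in\mathbb S^{d-1}$ can be written as $w=a e_1+w'$ with $w'\perp e_1$.
\item For the tangential part $w'$ (normalised), both $e_\pm:=(\theta' e_1\pm w')/|\theta' e_1\pm w'|$ lie in the cone, for a fixed $\theta'=\theta'(\theta)$.
\item Hence $\langle v,w'\rangle$ equals a positive combination of $\langle v,e_+\rangle$ and $\langle v,-e_-\rangle$ plus a multiple of $\langle v,e_1\rangle$.
\end{itemize}
This shows that the upper bounds from (i) control $\langle v,e_1\rangle$ from above and $\langle v,w'\rangle$ from above. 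They do not, however, control anything from below in the $e_1$ direction. So the missing piece is a lower bound on $\langle v,e_1\rangle$, i.e.\ we must rule out $y$ lying far in the $-e_1$ direction from $x+b(x)$.

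This lower bound is where \eqref{eq:topological2} and the tangency geometry enter. Since $y\in\Omega_1$ and, by \eqref{eq:topological2}, $|y-x|\lesssim R$ (note $|b(x)|$ is bounded on $Q_{2R}$), $y$ lies in a cube $Q_{CR}$. There $\Omega_1$ is the supergraph of $g_1$ with $|g_1|\lesssim L_1R$, so $y\cdot e_1\ge -CL_1R$. In the boundary layer we also have $x\cdot e_1\le RL_1+RM$, and \eqref{eq:topological2} gives $\langle b(x),e_1\rangle\le CL_1$. Combining,
\[
\langle v,e_1\rangle\ge -CL_1R-RL_1-RM-CL_1\gtrsim -R(L_1+M),
\]
where the additive term $CL_1$ is absorbed after normalisation, using $R\gtrsim R_c$, or else by reading the estimate at the relevant scale.

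With both one-sided bounds on $\langle v,e_1\rangle$ and the two-sided tangential bounds in hand, we get $|\langle v,w\rangle|/R\lesssim M+L_1$ for every unit $w$, hence $|v|/R\lesssim M+L_1$. Taking the maximum of the two cases yields the corollary.

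The main obstacle is the lower bound in the normal direction. It requires the Assumption \ref{ass:tangencyLocal} description of $\Omega_1$ on the larger cube containing $y$, which is exactly why \eqref{eq:topological2} is needed to localise $y$. One also has to check that the boundary functions $g_1$ stay Lipschitz with constant comparable to $L_1$ on that cube. If the term $\langle b(x),e_1\rangle\le CL_1$ does not scale correctly, we would instead interpret it as $CL_1R$, consistent with $b$ having size $\sim R$.
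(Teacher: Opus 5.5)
Your proposal is correct and follows the paper's proof essentially step for step. The interior case goes through Proposition \ref{prop:LinftyPartial}(ii). In the boundary layer, a lower bound on $\langle v,e_1\rangle$ (from \eqref{eq:topological2} and the graph description of $\Omega_1$) is combined with the cone upper bounds from (i).

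Two small remarks. First, the tangential bounds genuinely need that normal lower bound, since $\langle v,\pm w'\rangle = N\langle v,e_\pm\rangle-\theta'\langle v,e_1\rangle$ with $N=|\theta'e_1\pm w'|$; this is cleaner than the "positive combination" phrasing in your bullet list. Second, the two issues you flag are glossed over in the paper as well: it simply asserts $y\in Q_{2R}$, and it silently reads $\langle b(x),e_1\rangle\le CL_1$ as $CRL_1$ to obtain the term $(2+C)RL_1$.
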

\begin{proof}
Fix $(x,y)\in ((\Omega_0\cap Q_R)\times \R^d)\cap \supp\,\pi_\e$. If \eqref{c48} holds, we may use (ii) in Proposition \ref{prop:LinftyPartial} to conclude. Hence, we may assume
\begin{align*}
\langle x,e_1\rangle \leq R L_1 + R \max\left(E^\frac 1 2, E^\frac 1 {d+2}, \tau(\e)/R\right).
\end{align*}
Note that \eqref{eq:topological2} implies that $\langle y,e_1\rangle \geq -L_1 R$. Indeed, by \eqref{eq:topological2}, $y\in Q_{2R}$ and hence by Assumption \ref{ass:tangencyLocal} $\langle y,e_1\rangle \geq - L_1 R$. Consequently,
\begin{align}\label{conclusion1}
\langle y-x-b(x),-e_1\rangle\leq (2+C) R L_1 + R\max\left(E^\frac 1 2, E^\frac 1 {d+2}, \tau(\e)/R\right).
\end{align}
On the other hand, by (i) in Proposition \ref{prop:LinftyPartial} for any $e\in \partial B_1$ with $\langle e,e_1\rangle \geq \theta$,
\begin{align}\label{conclusion2}
    \langle y-x-b(x),e\rangle \lesssim \max\left(E^\frac 1 2, E^\frac 1 {d+2},\tau(\e)/R\right).
\end{align}
Combining \eqref{conclusion1} and \eqref{conclusion2} completes the proof.
\end{proof}

\subsection{Regularity estimates}
We note that with the $L^\infty$-bounds of Section \ref{sec:Linfty} at hand, the proof of interior regularity \cite[Proposition 16]{GvalaniKoch1} transfers almost without change to the boundary setting.
\begin{proposition}\label{prop:smallScale}
Assume $\lambda,\mu$ are probability measures with Lipschitz supports $\supp\,\lambda$, $\supp\,\mu$, respectively. Let $p\in [1,2]$. Suppose $\pi$ is a minimiser of $OT_{\e,h_p}$. There is $\e_1>0$ such that if $E(\pi,C_1 R_c)+\e+C_1^{-1}\leq \e_1$ for some $C_1>0$, then there is $C_2$, independent of $\e$ such that
\begin{align*}
 \|\nabla^2 f_\e\|_{L^\infty(Q_{C_1 R_c/3})}+\|\nabla^2 g_\e\|_{L^\infty(Q_{C_1 R_c/3})}\leq C_2.
\end{align*}
If $p<2$, then $f_\e,g_\e\in C^2(Q_{C_1 R_c/3})$.
\end{proposition}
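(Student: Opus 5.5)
The plan is to follow the interior argument of \cite[Proposition 16]{GvalaniKoch1}, with Corollary \ref{cor:Linfty} replacing the interior $L^\infty$ bound. Only two places use interiority: the $L^\infty$ control of the conditional supports, and the lower bound on the mass of $\mathcal S_x$ at the critical scale.

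\textbf{Step 1: rescale and bound the conditional supports.} Rescale by $R=C_1R_c$ using the affine invariance of Section \ref{sec:changeOfCoordinates}, so that the cube becomes $Q_1$. Apply Corollary \ref{cor:Linfty} with $b=0$. Since $E(\pi,C_1R_c)\leq \e_1$, $\tau(\e)/R\lesssim C_1^{-1}$, and $L_1$ is small at this scale (the boundary is $C^{1,\alpha}$), we get for $x\in Q_{C_1R_c/2}\cap\supp\,\lambda$:
\begin{align*}
\mathcal S_x\subset B(x, \delta C_1 R_c), \qquad \delta\lesssim \e_1^{\frac 1{d+2}}+C_1^{-1}+L_1 .
\end{align*}
The same bound holds for $\mathcal T_y$. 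In particular all relevant points lie in $Q_{C_1R_c}$.

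\textbf{Step 2: second derivatives from the marginal identities.} Differentiate the identity $\int (h')^{-1}(\xi/\e^2)\,\d\mu(y)=1$ twice in $x$, using the gradient formula \eqref{eq:gradPhiPower}. This expresses $\nabla^2\phi_\e(x)$ as $2\,\textup{id}$ minus a covariance-type quantity of $y$ under the weight $\xi^{\frac{2-p}{p-1}}$ (respectively the exponential weight for $p=1$), normalised by $\e^2$. For $p=2$ the weight is an indicator, so the expression is understood in $W^{2,\infty}$. Concretely,
\begin{align*}
\nabla^2 f_\e(x)\approx \frac{c}{\e^2}\,\mathrm{Cov}_{w_x}(y),
\end{align*}
so an upper bound on $\nabla^2 f_\e$ reduces to $\mathrm{diam}(\mathcal S_x)^2\lesssim \e^2/(\text{mass normalisation})$. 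Step 1 gives $\mathrm{diam}\,\mathcal S_x\lesssim R_c$, with $R_c$ matched exactly to $\e$.

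\textbf{Step 3: mass lower bound (main obstacle).} The upper bound also needs a lower bound on the weighted mass $\int_{\mathcal S_x}\xi^{\frac 1{p-1}}\,\d\mu\simeq 1$ on a set of diameter $\sim R_c$. In the interior this is automatic. Near the boundary, $\mathcal S_x$ may be cut by $\partial\Omega_1$, and if $\partial\Omega_1$ were nonconvex a large fraction of the ball could be excluded. I would handle this with the uniform cone condition and the smallness of $L_1$ at scale $C_1R_c$: $\Omega_1\cap Q_{C_1R_c}$ is a near half-space. The normalisation $\int(h')^{-1}\,\d\mu=1$ together with Step 1 then forces $\xi/\e^2$ to be of order $R_c^2/\e^2$ on a region of comparable volume, which gives the non-degeneracy.

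\textbf{Step 4: conclusion.} Combining Steps 2 and 3 gives $\|\nabla^2 f_\e\|_{L^\infty(Q_{C_1R_c/3})}\leq C_2$, and symmetrically for $g_\e$. For $p<2$ the weight $\xi^{\frac{2-p}{p-1}}$ is continuous and vanishes on $\partial\mathcal S_x$, so the differentiated formula is continuous in $x$ and $f_\e,g_\e\in C^2$.
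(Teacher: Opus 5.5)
Your overall architecture matches the paper's. It has four pieces: a representation of $\nabla^2\phi_\e=2\,\textup{id}-\nabla^2 f_\e$ obtained by differentiating the marginal identity (the paper does this with Reynolds' transport theorem, which is where Lipschitz supports enter), the $L^\infty$ bound of Section \ref{sec:Linfty}, one boundary-adapted lower bound, and then the interior argument of \cite[Proposition 16]{GvalaniKoch1}.

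The gap is in Step 3. It is the only genuinely new boundary ingredient, and the mechanism you propose there does not work. For $1<p<2$, with $q$ the dual exponent, the quantity to control is
\[
\nabla^2\phi_\e(x)=(q-2)\,\frac{\int_{\mathcal S_x}\nabla_x\xi\otimes\nabla_x\xi\;\xi^{q-3}\,\dd\mu}{\int_{\mathcal S_x}\xi^{q-2}\,\dd\mu}.
\]
What needs a lower bound is therefore the denominator $\int_{\mathcal S_x}\xi^{q-2}\,\dd\mu$. It is not $\int_{\mathcal S_x}\xi^{q-1}\,\dd\mu$, which the marginal identity already fixes to be $\simeq\e^{2(q-1)}$.

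That identity, together with $|\mathcal S_x|\lesssim L_\infty^d$ ($L_\infty$ the bound from your Step 1), only bounds the maximum $\xi_m=\max_{\mathcal S_x}\xi(x,\cdot)$ from below. It does not imply what you assert, namely that $\xi(x,\cdot)$ stays comparable to $\xi_m$ on a subset of $\supp\,\mu$ of volume $\gtrsim R_c^d$.

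The missing idea is a Lipschitz bound on $y\mapsto\xi(x,y)$ at the critical scale:
\begin{itemize}
\item For $y\in\mathcal S_x$ one has $x\in\mathcal T_y$.
\item By the gradient formula \eqref{eq:gradPhiPower} for $\psi_\e$, $\tfrac12\nabla\psi_\e(y)\in\textup{conv}(\mathcal T_y)$.
\item Hence $|\nabla_y\xi(x,y)|=2|x-\tfrac12\nabla\psi_\e(y)|\lesssim L_\infty$, which gives $\xi(x,y)\geq\xi_m-CL_\infty|y-y_m|$.
\item Integrating over the interior cone of $\supp\,\mu$ with vertex at the maximiser $y_m$ (which may lie on $\partial\supp\,\mu$) yields $\int_{\mathcal S_x}\xi^{q-2}\,\dd\mu\gtrsim\xi_m^{q-2+d}/L_\infty^d$.
\end{itemize}
This is exactly the paper's boundary step. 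It uses only the cone condition, i.e.\ the Lipschitz supports in the statement. Your near-half-space reasoning and your claim that $L_1$ is small instead rely on $C^{1,\alpha}$ boundaries, which the statement does not assume.

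A few further points:
\begin{itemize}
\item In Step 2 you have swapped $f_\e$ and $\phi_\e$. It is $\nabla^2 f_\e$ that equals $2\,\textup{id}$ minus the covariance-type term.
\item For $p>1$ that term is normalised by $\int_{\mathcal S_x}\xi^{q-2}\,\dd\mu$, i.e.\ effectively by $\xi_m\sim R_c^2$, not by $\e^2$. Since $\e^2\ll R_c^2$ for $p>1$, a bound of the form $\e^{-2}\mathrm{Cov}$ with $\mathrm{Cov}\sim R_c^2$ would blow up.
\item For $p=2$ the Hessian contains a surface integral over the free boundary $\partial\mathcal S_x\setminus\partial\supp\,\mu$, with $|\nabla_y\xi|$ in the denominator. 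Saying the expression is ``understood in $W^{2,\infty}$'' does not deal with this term.
\item Step 1 is unavailable for $p=1$. Corollary \ref{cor:Linfty} requires $h'(0)=0$, and for entropic regularisation $\mathcal S_x=\supp\,\mu$. The covariance then has to be controlled by tail estimates in the spirit of Lemma \ref{lem:longTrajectories}, not by support bounds. The paper's written proof also only spells out $1<p\leq 2$.
\end{itemize}
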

\begin{proof}
By symmetry, it suffices to prove the claim for $f_\e$. 
Using the $C^1$-regularity and convexity of potentials from Section \eqref{sec:basicPropertiesPotentials} and the Lipschitz regularity of $\supp\,\lambda$ and $\supp\,\mu$, we may apply Reynold's transport theorem, c.f. \cite[Theorem A.1]{GonzalezGvalaniKoch}, to find for $\lambda$-a.e. $x$,
\begin{align*}
\nabla^2 f_\e(x) = 2I-\begin{cases}(q-2) \frac{\int_{\mathcal S_x}  \nabla_x\xi(x,y)\otimes \nabla_x \xi(x,y) \xi(x,y)^{q-3}\dd \mu(y)}{\int_{\mathcal S_x} \xi(x,y)^{q-2}\dd \mu(y)}\quad& \text{ for } 1<p <2 \\
\frac{\int_{\partial \mathcal S_x\setminus \partial \Omega_1} \frac{\nabla_x\xi(x,y)\otimes \nabla_x \xi(x,y)}{|\nabla g_\e(y)+2x-2y|}\mu \dd \mathscr H^{d-1}(y)}{\mu(\mathcal S_x)} \quad& \text{ if } p=2
\end{cases}
\end{align*}
with $\frac 1 p+\frac 1 q=1$,.

Moreover, from Section \ref{sec:Linfty}, it follows that
\begin{align}
  \sup_{x\in Q_{C_1 R_c/3},y\in \mathcal S_x, \hat x\in \mathcal T_y}  |y-\hat x|\leq \tau(C_1)R_c:=L_\infty,
\end{align}
where $\tau(C_1)\to 0$ as $C_1\to\infty$.

Fix $x\in Q_{C_1 R_c/3}$ and denote by $\xi_m$ the maximum of $\xi$ in $\mathcal S_x$, attained at $y_m$. By the interior cone condition, there is a cone $C(\alpha,\rho)$ with vertex $y_m$, height $\rho$ and aperture $\alpha$ such that $C(\alpha,\rho)\subset \supp\,\mu$. 
Then, 
\begin{align*}
    \int_{\mathcal S_x} (\xi_m-C L_\infty |y-y_m|)_+^{q-2}d y\geq &\int_{C(\alpha,\rho} (\xi_m-C L_\infty |y-y_m|)_+^{q-2} d y\\
    \gtrsim&_{\alpha,\rho} \int (\xi_m - C L_\infty t)_+^{q-2} t^{d-1} d t\gtrsim \frac{\xi_m^{q-2+d}}{L_\infty^d}.
\end{align*}

With these facts at hand, the proof of \cite[Proposition 16]{GvalaniKoch1} can be carried out ad verbatim.
\end{proof}

\subsection{Boundary control of long trajectories for ROT: \texorpdfstring{$p=1$}{}}
In this section, we consider entropic transport, that is $OT_{\e,h_1}$. We begin with a general control on the mass of very long trajectories for quasi-minimisers of quadratic optimal transport. 
\begin{lemma}\label{lem:mediumTrajectories}
Assume $\lambda,\mu\in \mathscr M(\R^d)$ admit $C^{\alpha}$-densities and $\supp\,\lambda$ and $\supp\,\mu$ are Lipschitz. Suppose $\pi\in \Pi(\lambda,\mu)$ satisfies the following: there is $\Lambda>0$ such that for any $\tilde \pi =\hat \pi + \restr{\pi}{\#_R^c}\in \Pi(\lambda,\mu)$,
\begin{align*}
\int_{\#_R}|x-y|^2\dd\pi-\int |x-y|^2 \dd \hat \pi\leq \Lambda R^{d+2}.
\end{align*}
Then there are $R_0, C,\e_1>0$ such that if $R\leq R_0$, $R^{d+2}E(\pi,R)\ll \rho^{d+2}\ll R^{d+2}$ and $D(\rho,\lambda,\mu)\leq \e_1$, then
\begin{align*}
    \pi(\{(x,y)\in \#_{R/2}\cap \supp\,\pi\colon |x-y|\geq \rho\})\lesssim \frac{\Lambda R^{d+2}}{\rho^{d+2}} R^d.
\end{align*}
\end{lemma}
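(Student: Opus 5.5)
We argue by a competitor construction: we reroute the long trajectories through short ones built from the nearly-flat local marginals, and then use the quasi-minimality inequality to convert the resulting gain in cost into a bound on their mass. Let
\begin{align*}
A:=\{(x,y)\in \#_{R/2}\cap\supp\,\pi\colon |x-y|\ge\rho\},\qquad m:=\pi(A).
\end{align*}
We cover $Q_{R/2}$ by a grid of cubes $Q_\rho(z_k)$ of sidelength comparable to $\rho$, about $(R/\rho)^d$ of them. For a pair $(x,y)\in A$, at least one endpoint lies in $Q_{R/2}$, say $x\in Q_\rho(z_k)$. The idea is to replace the long pair $(x,y)$ by short pairs that stay inside $Q_{C\rho}(z_k)$ (for a suitable large $C$), and to send the long-distance mass elsewhere along a cheaper route.

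Concretely, we build $\hat\pi$ on $\#_R$ as follows. Since $R^{d+2}E(\pi,R)\ll\rho^{d+2}$, Markov's inequality shows that the mass of trajectories with $|x-y|\ge\rho/4$ inside each cube is small compared with $\rho^d$. Together with $D(\rho)\le\e_1$, which says the marginals are close to constants on $Q_\rho$ up to the Lipschitz boundary, this tells us that most of $\lambda\llcorner Q_\rho(z_k)$ is matched by $\pi$ to $\mu$ restricted to a neighbouring cube.

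We take $\hat\pi$ to be $\pi$ with the following modifications. First, we remove $\pi\llcorner A$. Second, we transport $\lambda$-mass from the $x$-ends of $A$ to $\mu$-mass in the same or adjacent cubes, which costs at most $\rho^2$ per unit mass. The $\mu$-mass we use is freed by cutting short trajectories, and the displaced mass is rerouted through a chain of neighbouring cubes toward the $y$-ends. The densities are comparable to constants and the supports are Lipschitz (so each $Q_\rho\cap\supp$ has measure $\sim\rho^d$), which keeps the marginal constraints satisfiable; we may use the $W_2$-term in $D(\rho)$ for the local matching. The cleanest route is to glue the optimal couplings $\lambda\llcorner Q_\rho\leftrightarrow\kappa\,\dd x$ and $\kappa\,\dd x\leftrightarrow\mu\llcorner Q_\rho$.

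The key computation is the cost comparison. A trajectory in $A$ contributes at least $\rho^2$ to the cost, but a chain replacement of a displacement of length $\ell$ by $\ell/\rho$ steps of length $\rho$ costs only $\ell\rho$ rather than $\ell^2$. Summing, and using $R\le R_0$ so that all lengths are bounded, the plan is to show
\begin{align*}
\int_{\#_R}|x-y|^2\dd\pi-\int|x-y|^2\dd\hat\pi\gtrsim \frac{\rho^{d+2}}{R^{d}}\,m - C\bigl(R^{d+2}E+D(\rho)\rho^{d+2}\bigr)\frac{R^d}{\rho^d},
\end{align*}
where the error terms come from the local matchings. Quasi-minimality bounds the left side by $\Lambda R^{d+2}$. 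The regime assumptions let us absorb the errors, and this yields $m\lesssim \Lambda R^{d+2}\rho^{-(d+2)}R^d$.

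I expect the main obstacle to be making the rerouted plan an honest element of $\Pi(\lambda,\mu)$ that agrees with $\pi$ off $\#_R$ near the boundary. The difficulty is that mass must not leave the supports, and the chain of cubes must stay in $\supp\,\lambda\cap\supp\,\mu$. I would first try to pick the intermediate cubes using the interior cone condition of the Lipschitz domains, which keeps a fixed fraction of each cube inside the support. If that fails, I would instead perform the reshuffling purely within each cube and pay the long trajectories directly, as in the standard $L^2\to$ tail argument of the variational approach.
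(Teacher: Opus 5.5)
\textbf{There is a genuine gap, and it lies in your cost comparison.} The inequality you aim for,
\[
\int_{\#_R}|x-y|^2\dd\pi-\int|x-y|^2\dd\hat\pi\gtrsim \frac{\rho^{d+2}}{R^{d}}\,m - C\bigl(R^{d+2}E+D(\rho)\rho^{d+2}\bigr)\frac{R^d}{\rho^d},
\]
has error terms that depend neither on $m$ nor on $\Lambda$. Combined with quasi-minimality it only yields
\[
m\lesssim \frac{\Lambda R^{2d+2}}{\rho^{d+2}}+\bigl(R^{d+2}E+D(\rho)\rho^{d+2}\bigr)\frac{R^{2d}}{\rho^{2d+2}},
\]
and nothing in the hypotheses bounds the second term by the first. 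The regime assumptions make it small, but not small relative to $\Lambda$.

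The case $\Lambda=0$ makes this plain. For exact minimisers the lemma asserts $m=0$, an $L^\infty$-type bound. No competitor that pays an $m$-independent price of order $D(\rho)\rho^{d+2}$ per cube, for gluing optimal couplings through $\kappa\,\dd x$, can deliver that. The gain $\rho^{d+2}m/R^d$ is also not derived; it appears to be read off from the desired conclusion.

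The chain rerouting toward the $y$-ends is also not admissible, for three reasons:
\begin{enumerate}
\item $y$ may lie far outside $Q_R$, so the short trajectories you would cut along the segment lie in $\#_R^c$, where $\pi$ must stay unchanged.
\item The segment can leave the non-convex supports.
\item For $\ell\approx\rho$ the chain gain $\ell^2-\ell\rho$ degenerates to zero.
\end{enumerate}

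\textbf{The missing idea} is that the competitor should modify only mass comparable to $m$, with a pointwise strictly negative change in cost. The smallness of $E$ and the density bounds should then be used only to guarantee enough partner mass, never as an additive error. This is the route the paper takes, following \cite[Lemma 4]{GvalaniKoch}.
\begin{enumerate}
\item Cover $Q_{R/2}\cap\supp\,\lambda$ by $O((R/r)^d)$ balls $B_r(x_0)$, with $r$ a small fraction of $\rho$ and $|B_r\cap\supp\,\lambda|\geq\frac14|B_r|$.
\item Split $\mathbb S^{d-1}$ into finitely many cones of small aperture around directions $e$. It then suffices to show $r^2\,m\lesssim\Lambda R^{d+2}$ for $m:=\pi(B_r(x_0)\times(x_0+C_{\rho,e}))$, where $C_{\rho,e}$ is the cone minus $B_\rho$. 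Note that $m\lesssim r^d$.
\item By Markov's inequality and the density lower bound, there is mass $\gtrsim r^d$ of trajectories $(x',y')$ with $x'$ in a ball displaced by about $r$ in direction $e$ and $|x'-y'|\ll\rho$.
\item Swap endpoints between mass of order $m$ from each family. Each swapped pair changes the cost by $2\langle x-x',y-y'\rangle\lesssim -r\rho$, and the new plan has the same marginals as $\pi\llcorner\#_R$. Quasi-minimality then gives $m\,r\rho\lesssim\Lambda R^{d+2}$.
\item Summing over balls and directions gives the claim.
\end{enumerate}
Your observation that, by Markov, most mass in each $\rho$-cube sits on short trajectories is exactly the ingredient needed. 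It has to feed this local directional swap rather than a global rematching through flat reference measures.
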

\begin{proof}
The proof essentially follows \cite[Lemma 4]{GvalaniKoch} with some additional care needed at the initial covering step, which we now detail. 

As the statement is symmetric under exchanging the roles of $x,y$, it suffices to show
\begin{align*}
\pi(\{(x,y)\in (Q_{R/2}\cap \supp\,\lambda)\times \R^d\colon |x-y|\geq \rho\})\lesssim \frac{\Lambda R^{d+2}}{\rho^{d+2}}R^d.
\end{align*}
Choosing $R_0$ sufficiently small, we may cover $Q_{R/2}\cap \supp\,\lambda$ by $O(R^d r^{-d})$ balls of radius $r$, where $r$ is a small fraction of $\rho$. Additionally, we may ensure that for each such ball $B$, $|\supp\,\lambda \cap B|\geq \frac 1 4 |B|$. Thus, it suffices to show that for a ball $B$ of radius $r$ with $|B\cap \Omega|\geq \frac 1 4 |B|$,
\begin{align*}
    r^2 \pi(B_r\times B_\rho^c)\lesssim \Lambda R^{d+2}.
\end{align*}
Covering the unit sphere by geodesic balls, it suffices to show that for any direction $e$,
\begin{align*}
r^2 \pi(B_R\times C_{\alpha,e})\lesssim \Lambda R^{d+2},
\end{align*}
where $C_{\rho,e}$ is the intersection of $B_\rho^c$ with the cone of aperture $\alpha$ in direction $e$. If $m=\pi(B_r\times C_{\alpha,e})=0$, the statement is obvious. Otherwise, the proof can now be carried out verbatim as in \cite[Lemma 4]{GvalaniKoch}.
\end{proof}

We further require the following analogue of \cite[Proposition 14]{GvalaniKoch1}.

\begin{lemma}\label{lem:longTrajectories}
Let $\pi$ be a minimiser of $OT_{\e,h_1}$. Assume $\Omega_0=\supp\,\lambda$ and $\Omega_1=\supp\,\mu$ are Lipschitz domains. Let $T>0$ and assume $L_1:=\mbox{Lip}(g,Q_{(T+1)R})+\mbox{Lip}(b,B_{(T+1)R})<\frac 1 4$. There are $C,C_0>0$ such that if $R\geq C \e$ and  and Assumption \ref{ass:tangencyLocal} holds in $Q_{(T+1)R}$., then for almost every $x\in Q_R$,
\begin{align*}
    \frac 1 {R^2}\int_{\{T R\geq |x-y|\geq C_0 R\}} |x-y|^2 \frac{\dd\pi}{\dd P}(x,y)\dd\mu(y)\lesssim e^{-\frac{R^2}{\e^2}}(1+E(\pi_\e,2R))+ L_1 R.
\end{align*}
\end{lemma}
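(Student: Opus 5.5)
We adapt the proof of \cite[Proposition 14]{GvalaniKoch1} to the boundary. The starting point is the explicit form of the density for entropic transport. With $h_1(z)=z\log z$ we have $(h_1')^{-1}(s)=e^{s-1}$, hence
\begin{align*}
\frac{\dd\pi}{\dd P}(x,y)=\exp\left(\frac{\xi(x,y)}{\e^2}-1\right).
\end{align*}
The task is therefore to show that $\xi(x,y)\lesssim -|x-y|^2$ (up to a controlled error) whenever $C_0R\le |x-y|\le TR$ and $x\in Q_R$.

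\textbf{Step 1.} We use monotonicity with a comparison pair. Fix $x\in Q_R\cap\Omega_0$ and $y$ with $C_0R\le|x-y|\le TR$. For any $(x',y')$ we have the identity
\begin{align*}
\xi(x,y)+\xi(x',y')=\xi(x,y')+\xi(x',y)-\Delta(x,y,x',y').
\end{align*}
We pick $x'$ in a ball $B_{cR}(x+2cR\,e)$ with $e=(y-x)/|y-x|$. By Assumption \ref{ass:tangencyLocal} and $L_1<\frac14$, the direction $e$ may be tilted towards $e_1$ so that this ball lies in $\Omega_0\cap Q_{(T+1)R}$; this is the cone construction \eqref{c40}. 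The resulting tilt error is the source of the $L_1R$ term. We pick $y'$ with $|y'-x'|\lesssim R$, which is possible for most of the $\pi$-mass by Markov's inequality and $E(\pi,2R)$. Then
\begin{align*}
-\Delta=2\langle x'-x,\,y-y'\rangle\gtrsim cR|x-y|-CR^2-L_1R|x-y|.
\end{align*}
This is bounded below by $c|x-y|^2$ once $C_0$ is large, using $|x-y|\le TR$.

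\textbf{Step 2.} We average over $(x',y')$ and control the cross terms. Integrating the identity over $(x',y')$ with respect to $\pi$ restricted to the set $B(x,y)$ from Step 1, as in \eqref{c43}, we obtain a lower bound $\pi(B(x,y))\gtrsim R^d$. This comes from the density bounds, $E\ll1$, and Markov's inequality, and reflects that $\xi(x',y')$ is not too negative on a large portion of the set. We also obtain an upper bound on $\frac{\dd\pi}{\dd P}(x,y')$ and $\frac{\dd\pi}{\dd P}(x',y)$ of order $R^{-d}$ there. Exponentiating yields
\begin{align*}
\frac{\dd\pi}{\dd P}(x,y)\lesssim R^{-2d}e^{-c|x-y|^2/\e^2}\left(1+E(\pi,2R)\right).
\end{align*}
The factor $1+E$ comes from the $y'$ with $|y'-x'|$ of size $E^{1/2}R$.

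\textbf{Step 3.} We integrate. Multiplying by $|x-y|^2/R^2$ and integrating over $\{C_0R\le|x-y|\le TR\}$ against $\dd\mu$, the assumption $R\ge C\e$ lets us absorb the polynomial prefactors $R^{-2d}(TR)^{d+2}$ into the exponential $e^{-(c C_0^2-1)R^2/\e^2}$, after adjusting constants. The tilt error from Step 1 cannot be absorbed in this way. Where it dominates we only bound $-\Delta\ge -CL_1R|x-y|$, and the marginal constraint $\int\frac{\dd\pi}{\dd P}(x,y)\,\dd\mu(y)=1$ then gives the additive $L_1R$ contribution.

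\textbf{Main obstacle.} The hardest part is the geometry in Step 1 near $\partial\Omega_0$. When $y-x$ points outward, roughly towards $-e_1$, the natural comparison ball $x+2cR\,e$ leaves $\Omega_0$. We then have to rotate $e$ into the cone $\{\langle e,e_1\rangle\ge\theta\}$ and still retain $\langle x'-x,y-y'\rangle\gtrsim R|x-y|$. For outward $y$, this requires $y\in\Omega_1\cap Q_{(T+1)R}$ to lie above the graph of $g_1$, which is nearly flat and nearly coincides with the graph of $g_0$. Hence $\langle y-x,-e_1\rangle\lesssim L_1 TR$, and any failure of the rotation argument is confined to this $O(L_1)$ error. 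We would first try to handle this case exactly as in Corollary \ref{cor:Linfty}.
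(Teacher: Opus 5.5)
\textbf{Gap 1: the factor from $(x',y')$ is never controlled.} The gap is in Steps 2--3, in how the comparison pair $(x',y')$ enters. First, a sign: since $\Delta=2\langle x-x',y-y'\rangle$, the identity reads $\xi(x,y)+\xi(x',y')=\xi(x,y')+\xi(x',y)+\Delta$. Exponentiated, it says
\[
\frac{\mathrm d\pi}{\mathrm dP}(x,y)=\frac{\mathrm d\pi}{\mathrm dP}(x,y')\,\frac{\mathrm d\pi}{\mathrm dP}(x',y)\Big(\frac{\mathrm d\pi}{\mathrm dP}(x',y')\Big)^{-1}e^{\Delta/\varepsilon^2}.
\]
The marginal identities give you the bounds $\lesssim 1/\pi(B)\sim R^{-d}$ on the two cross densities, so that part is fine. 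But you also need to control the third factor, and for $p=1$ this is not free. Here $\mathrm{supp}\,\pi=\mathrm{supp}\,\lambda\times\mathrm{supp}\,\mu$, and the mechanism behind \eqref{c44} ($\xi\ge 0$ on the support when $h'(0)=0$) is unavailable. Your phrase ``$\xi(x',y')$ is not too negative'' is exactly where an argument is needed. Your bound $R^{-2d}e^{-c|x-y|^2/\varepsilon^2}$ amounts to treating this factor as $O(1)$.

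Step 3 then fails. Integrating leaves $T^{d+2}R^{-d}e^{-cC_0^2R^2/\varepsilon^2}(1+E)$. The assumption $R\ge C\varepsilon$ only makes $e^{-(cC_0^2-1)R^2/\varepsilon^2}$ a small constant, so it cannot absorb $R^{-d}$ (take $R=C\varepsilon$ with $\varepsilon\to 0$).

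\emph{Fixing your route.} The missing input is the normalisation $P(B)/\pi(B)$. By Jensen for $t\log t$,
\[
\frac{1}{\pi(B)}\int_B\log\frac{\mathrm d\pi}{\mathrm dP}\,\mathrm d\pi\ge\log\frac{\pi(B)}{P(B)}.
\]
Together with $P(B)\lesssim R^{2d}$ (for $E\lesssim 1$) and $\pi(B)\gtrsim R^d$, this supplies exactly the lost $R^d$. One gets $\frac{\mathrm d\pi}{\mathrm dP}(x,y)\lesssim R^{-d}e^{-c|x-y|^2/\varepsilon^2}$, which integrates to a scale-invariant bound.

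\emph{The paper's route.} It avoids this issue. It multiplies the product identity by $|x-y|^2$ and integrates against $\mathrm d\pi(x',y')$ over $A(x,y)\subset\{-\Delta\ge R^2\}$ and against $\mathrm d\mu(y)$. The left side becomes $\frac{\mathrm d\pi}{\mathrm dP}(x,y)\,\pi(A(x,y))$, so no lower bound on $\frac{\mathrm d\pi}{\mathrm dP}(x',y')$ is ever needed. The right side is bounded via the marginal identities by $R^{d+2}e^{-R^2/\varepsilon^2}(1+E)$, with the factor $1+E$ coming from $|x-y|^2\le 2|x-x'|^2+2|x'-y|^2$. One then divides by $\pi(A(x,y))\gtrsim R^d$. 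In your pointwise route, by contrast, enlarging $|y'-x'|$ to $E^{1/2}R$ costs a factor $E^{d/2}$ through $P(B)$.

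\textbf{Gap 2: the mechanism for the $L_1R$ term is invalid.} Suppose that on some set of $y$ you only had $-\Delta\ge -CL_1R|x-y|$. Then the marginal constraint gives nothing better than $R^{-2}\int_{\{C_0R\le|x-y|\le TR\}}|x-y|^2\frac{\mathrm d\pi}{\mathrm dP}(x,y)\,\mathrm d\mu(y)\le T^2$. It cannot produce $L_1R$, which does not even scale like the left-hand side.

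What rescues the argument is that this regime is empty. Your ``main obstacle'' observation, which is the paper's \eqref{eq:geometryInsight}, shows this once you use it quantitatively:
\begin{enumerate}
\item Since $y\in\Omega_1\cap Q_{(T+1)R}$ lies above the graph of $g_1$ and $x\in Q_R$, you get $\langle y-x,e_1\rangle\ge -L_1|y-x|-CR$. Hence $\langle e,e_1\rangle\ge-\frac14-\frac{C}{C_0}\ge-\frac12$ whenever $|y-x|\ge C_0R$.
\item Set $\tilde e:=(e+e_1)/|e+e_1|$. Then $\langle\tilde e,e_1\rangle=\langle\tilde e,e\rangle=\sqrt{(1+\langle e,e_1\rangle)/2}\ge\frac12$.
\item Consequently the ball of radius $cR/4$ around $x+2cR\tilde e$ lies in $\{z\in Q_{(T+1)R}: z_1-x_1>L_1|z'-x'|\}\subset\Omega_0$, using $L_1<\frac14$.
\item For $x'$ in this ball and $|y'-x'|\lesssim R$, one has $-\Delta\gtrsim cR|x-y|$ once $C_0\gg 1$.
\end{enumerate}
The tilt costs a multiplicative constant, not an additive $L_1R|x-y|$, so the comparison set works uniformly in $y$. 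The $L_1R$ term is never generated, and the paper's proof does not generate it either. You should replace your Step 3 fallback with this construction.
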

\begin{proof}
Recall that $\Delta = |x-y'|^2+|x'-y|^2-|x-y|^2-|x'-y'|^2$.
Set for $C_1>0$ to be determined,
\begin{align*}
    A(x,y):=&\{(x',y')\in \#_R\colon -\Delta \geq R^2,\\
&\quad|x-x'|\leq 2R,\,|x'-y'|^2 \leq R^2 C_1\max( E(\pi,(T+1)R),1).
\end{align*}
Arguing as in \cite[Proposition 14]{GvalaniKoch1}, we find for almost every $x\in \supp\,\lambda$,
\begin{align*}
\int \int_{A(x,y)} 1_{\{T R\geq |x-y|\geq C_0 R\}} |x-y|^2 \frac{\dd \pi}{\dd P}(x,y)\dd \pi(x',y')\dd\mu(y)\lesssim R^{d+2} e^{-\frac{R^2}{\e^2}}(1+E_{\pi,2R}).
\end{align*}

If $\langle x,e_1\rangle \geq R L_1$, then $B_{RL_1/2}(x+R L_1/2 e)\subset \supp\,\lambda$ and by the interior argument from \cite[Proposition 7]{GvalaniKoch}, $\pi(A(x,y))\gtrsim R^d$. This gives the desired estimate. 

Hence, we assume $\langle x,e_1\rangle \leq R L_1$. As $|y-x|\leq T R$, $\langle y,e_1\rangle \geq - L_1 R$ and we find
\begin{align}\label{eq:geometryInsight}
    \langle y-x,e_1\rangle\geq - 2L_1 R.
\end{align}
As in \cite[Proposition 7]{GvalaniKoch}, for any sufficiently large constant $C_1>0$,
\begin{align*}
A(x,y)\supset& \{(x',y')\colon x'\in C_\alpha(x,y)\cap (B_{R}(x)\setminus B_{R/2}(x))\\
&\quad\text{ and } |x'-y'|^2\leq C_1 R^2\max(E(\pi,2R),1)\}=:B(x,y),
\end{align*}
where $C_\alpha(x,y)$ is the cone in direction $y-x$ with vertex $x$ and aperture $\alpha$.
In light of \eqref{eq:geometryInsight}, for $x'\in C_{\alpha}(x,y)\cap (B_{R}(x)\setminus B_{R/2}(x))$ with $\langle x'-|x'|\frac{y-x}{|y-x|},e_1\rangle \geq |x'|/2$,
\begin{align*}
\langle x',e_1\rangle = \langle x'-|x'|\frac{y-x}{|y-x|},e_1\rangle + |x'|\langle \frac{y-x}{|y-x|},e_1\rangle \geq -c \alpha |x'|+|x'|/2.
\end{align*}
Choosing $\alpha$ sufficiently small, so that $-c\alpha|x'|+|x'|/2\geq L_1  |x'|$, it follows that 
\begin{align*}
\pi((C_\alpha(x,y)\cap (B_{R}(x)\setminus B_{R/2}(x)))\cap \R^d)= \lambda(C_\alpha(x,y)\cap (B_{R}(x)\setminus B_{R/2}(x)))\gtrsim R^d.
\end{align*}
Moreover,
\begin{align*}
&\pi\left(((C_\alpha(x,y)\cap (B_{R}(x)\setminus B_{R/2}(x)))\times \R^d)\cap \left\{|x'-y'|^2\geq \frac{1}{C_1 R^2\max(E(\pi,2R),1)}\right\}\right)\\
\leq& \frac 1 {C_1 R^2 \max(E(\pi,2R),1)} \int_{B(x,y)} |x'-y'|^2 \dd \pi\leq \frac{R^d}{C_1}.
\end{align*}
For a sufficiently large choice of $C_1$, we conclude that for some $c>0$ and a sufficiently large choice of $C_1$,
\begin{align*}
\pi(A(x,y))\geq \pi(B(x,y))\geq c R^d-\frac{R^d}{C_1}\geq\frac{c} 2 R^d.
\end{align*}
This implies the desired estimate.
\end{proof}

We finally combine Lemma \ref{lem:mediumTrajectories} and Lemma \ref{lem:longTrajectories} into the following form.
\begin{corollary}\label{eq:crossingEntropic}
Let $\pi$ be a minimiser of $OT_{\e,h_1}. $Assume $\lambda,\mu$ admit $C^{\alpha}$-densities and further $\supp\,\lambda$ and $\supp\,\mu$ are $C^{1,\alpha}$-domains. Let $T>1$. There is $C>0$ such that if $R\geq C\e$ and Assumption \ref{ass:tangencyLocal} holds in $Q_{(T+1)R}$ and $(TR)^{-(d+2)}\int_{\#_R\cap \{|x-y|\geq TR\}} |x-y|^2 \dd \pi\leq \frac{\e^2}{T^2 R^2}+D((T+1)R,\lambda,\mu)+D_{bdy}((T+1)R))$, Then for any $\tau>0$, there are $\e_1,C_\tau>0$ such that if $E(\pi,2R)\leq \e_1$ and $D_{bdy}((T+1)R)\leq \frac 1 4$, then \[\frac 4 {R^{d+3}}\int_{R/2}^{3R/4}\int_{\partial(\#_r)}\lvert x-y\rvert^2\;d\pi dr \leq \tau E(\pi,R)+C_\tau \left(D_{bdy}((T+1)R)+\frac{\e^2}{R^2}\right)\] and \[\frac 4 R\int_{\frac R 2}^{\frac{3R} 4}\pi(\#_r) dr\leq \tau.\]
\end{corollary}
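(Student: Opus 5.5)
We prove both estimates by splitting the trajectories that cross $\partial(\#_r)$ according to their length, with thresholds $\rho$ and $TR$. Here $\rho$ is a small fraction of $R$, fixed in terms of $\tau$. The two estimates are the boundary analogue of the usual crossing estimates in the variational approach, and the argument follows the interior scheme of \cite{GvalaniKoch1}: short trajectories are controlled by Fubini, medium ones by Lemma \ref{lem:mediumTrajectories}, and long ones by Lemma \ref{lem:longTrajectories} together with the stated hypothesis on trajectories of length at least $TR$.

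\emph{Short trajectories} ($|x-y|\le\rho$). A segment $[x,y]$ crosses $\partial Q_r$ only for $r$ in an interval of length at most $|x-y|$, and then $(x,y)\in\#_{3R/4+\rho}\subset\#_R$. Fubini in $r$ gives
\begin{align*}
\frac{4}{R^{d+3}}\int_{R/2}^{3R/4}\int_{\partial(\#_r)\cap\{|x-y|\le\rho\}}|x-y|^2\,\dd\pi\,\dd r\lesssim \frac{\rho}{R}\,E(\pi,R).
\end{align*}
Choosing $\rho=c\tau R$ makes this at most $\tfrac{\tau}{2}E(\pi,R)$. The same Fubini argument, combined with Markov's inequality $\pi(\#_R\cap\{|x-y|\ge s\})\le s^{-2}R^{d+2}E(\pi,R)$, bounds the averaged crossing mass from the second estimate. (We read $\pi(\#_r)$ there as $\pi(\partial(\#_r))$ normalised by $R^d$.) This contribution is $\lesssim \rho/R$ plus a term controlled by $E(\pi,2R)\le\e_1$, so it is small once $\e_1\ll\tau$.

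\emph{Medium trajectories} ($\rho\le|x-y|\le TR$). We first check that $\pi$ is a quasi-minimiser of quadratic transport in the sense of Lemma \ref{lem:mediumTrajectories}. Competing with $\hat\pi$ and using entropic optimality, the cost excess is bounded by $\e^2$ times the difference of entropies on $\#_R$, i.e. $\Lambda\lesssim \e^2R^{-2}\log(\e^{-1})$. We expect this to be too lossy, so instead we take the first step of Lemma \ref{lem:longTrajectories}: for a.e. $x$ the conditional second moment on $\{C_0R\le|x-y|\le TR\}$ is $\lesssim R^2\big(e^{-R^2/\e^2}(1+E)+L_1R\big)$. The range $\rho\le|x-y|\le C_0R$ needs the same argument at scale $\rho$ in place of $R$. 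Lemma \ref{lem:longTrajectories} applied at scale $\rho$ gives conditional mass $\lesssim e^{-\rho^2/\e^2}+L_1\rho$ for lengths in $[C_0\rho,T\rho]$. Iterating dyadically in the length from $\rho$ up to $TR$ (only $O(\log(T/\tau))$ steps) and integrating over $x\in Q_R$ against $\lambda$, we get a contribution
\begin{align*}
\lesssim_\tau e^{-c\rho^2/\e^2}+L_1R\lesssim_\tau \frac{\e^2}{R^2}+D_{bdy}((T+1)R)^{1/2}.
\end{align*}
Here $R\ge C\e$ with $C=C(\tau)$ large is what makes the exponential small. The requirement $L_1<\tfrac14$ in Lemma \ref{lem:longTrajectories} follows from $D_{bdy}\le\tfrac14$ after normalisation and a $C^{1,\alpha}$ interpolation.

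\emph{Long trajectories} ($|x-y|\ge TR$). Every such trajectory meeting $\#_r$ lies in $\#_R$, so the assumed bound applies directly: its contribution to the first estimate is $\lesssim T^{d+2}(TR)^{-(d+2)}\int_{\#_R\cap\{|x-y|\ge TR\}}|x-y|^2\,\dd\pi$. This is controlled by $\e^2/R^2+D+D_{bdy}$, and the $D$ term is absorbed into the data. For the second estimate we again use Markov's inequality.

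Adding the three regimes gives both claims. The main obstacle is the medium regime near the boundary: the boundary term $L_1R$ in Lemma \ref{lem:longTrajectories} must be converted into $D_{bdy}$. Only the square root $D_{bdy}^{1/2}\sim L_1R$ appears naturally, so we plan to use the $C^{1,\alpha}$ flatness to replace $L_1$ by $R^{\alpha}[\nabla g]_\alpha$ and absorb the square root via Young's inequality into $\tau E+C_\tau D_{bdy}$. If that fails, we will accept a weaker power, which suffices for the later one-step improvement.
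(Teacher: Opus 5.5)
Your short-trajectory regime (Fubini with $L(x,y)\le|x-y|$ and $\rho\sim\tau R$), your long-trajectory regime (the assumed bound on $\{|x-y|\ge TR\}$), and your Markov argument for the second estimate all match the paper. \textbf{The gap is the medium regime.}

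\emph{Why the dyadic iteration does not follow from Lemma \ref{lem:longTrajectories}.} At scale $s$, that lemma gives its pointwise bound only for $x\in Q_s$. This cube must be centred at a point where Assumption \ref{ass:tangencyLocal} holds, i.e.\ where $\partial\Omega_0$ and $\partial\Omega_1$ meet with a common normal. The proof uses exactly this to obtain $\langle y-x,e_1\rangle\ge -2L_1 s$, which places a cone of $\lambda$-mass $\gtrsim s^d$ in direction $y-x$ inside $\Omega_0$.

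\emph{What goes wrong at other centres.} To use the lemma at scale $s\sim\rho$ and then integrate over $x\in Q_R$, you must cover $Q_R$ by cubes of side $\sim\rho$ centred at points where tangency fails. There the two boundaries are offset by up to $\sup|g_0|+\sup|g_1|\sim D_{bdy}^{1/2}$. Trajectories of length comparable to that offset, pointing out of $\Omega_0$, then carry non-negligible mass; they are not exponentially rare. So the lemma cannot hold verbatim at such centres, and your plan does not address this.

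\emph{What a repair would need.} You would need a new version of the lemma at non-tangent centres. First dispose of the case $D_{bdy}^{1/2}\gtrsim c\rho$: there the claim is trivial, since the left-hand side is always at most $E(\pi,R)\lesssim\e_1$. Relatedly, using the lemma at scale $\rho=c\tau R$ requires $\rho\ge C\e$, i.e.\ a $\tau$-dependent $C$. This contradicts the order of quantifiers in the statement, though it is also repairable, because the claim is trivial when $R\lesssim_\tau\e$.

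\emph{How the paper avoids the problem.} It splits once more at $C_0R$. Lemma \ref{lem:longTrajectories} is used only at scale $R$ around $0$, for $C_0R\le|x-y|\le TR$. The range $\rho\le|x-y|\le C_0R$ is handled by Lemma \ref{lem:mediumTrajectories}. Its covering step needs only balls $B$ with $|B\cap\Omega_0|\ge\frac14|B|$ and $D(\rho)\le\e_1$, not tangency at their centres.

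\emph{Why the quasi-minimality route is not too lossy.} Your reason for discarding it came from a crude entropy comparison. The transport-cost excess of the entropic plan is $O(\e^2)$ per unit mass, with no logarithm. This is the second assertion of Proposition \ref{prop:EnergyConvergence}, localised in Lemma \ref{lem:localQuasiminimality}, whose $p=1$ hypothesis is essentially the corollary's assumed bound on long trajectories. Hence $\Lambda\sim\e^2/R^2$, up to $\tau' E$ and data terms. Lemma \ref{lem:mediumTrajectories} then bounds the medium contribution by $C_0^2(R/\rho)^{d+2}\Lambda$. After choosing $\tau'$ small, this is at most $\tau E+C_\tau(\e^2/R^2+D_{bdy})$.

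\emph{A separate error.} Your fallback for the term $L_1R\sim D_{bdy}^{1/2}$ does not work as written. Young's inequality cannot bound a stand-alone $D_{bdy}^{1/2}$ by $\tau E+C_\tau D_{bdy}$ (take $E=0$), because there is no factor $E^{1/2}$ to pair it with.
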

\begin{proof}
For $X(t)=x(1-t)+ty$, note 
	\begin{equation}\label{crossing trajectories}
		\begin{aligned}
	       \int_{R/2}^{3R/4}\int_{\partial \#_r}\lvert x-y\rvert^2\;d\pi d r &= \int_{\#_{R}}\lvert x-y\rvert^2\int_{R/2}^{3R/4}\mathbf{1}\left\{\exists t: X(t)\in\partial Q_R\right\} dr\;d\pi \\
           &= \int_{\#_{R}}\lvert x-y\rvert^2 L(x,y)d\pi
          \end{aligned}
    \end{equation} where 
    \begin{align*}
        L(x,y) \leq \min\left\{R, \max_t\lvert X(t)\rvert-\min_t\lvert X(t)\rvert\right\} \leq \min\left\{R, \lvert x-y\rvert\right\}.
        \end{align*}

	Using Lemma \ref{lem:mediumTrajectories} and Lemma \ref{lem:longTrajectories}, \eqref{crossing trajectories} is therefore bounded by 
	\begin{align*}
    &R\int_{(x,y)\in \#_R\colon |x-y|\geq TR} |x-y|^2d\pi+
    	R\int_{(x,y)\in\#_{R}: T R\geq \lvert x-y\rvert\geq C_0 R}\lvert x-y \rvert^2 d\pi \\
        &+R\int_{(x,y)\in \#_{R}: \rho\leq \lvert x-y\rvert\leq C_0 R}\lvert x-y\rvert^2 d\pi+\rho \int_{\#_R} |x-y|^2\\
    	\leq& \e^2(T R)^{d}+(TR)^{d+2}D_{bdy}((T+1)R))+R^{d+3}e^{\frac{R^2}{\varepsilon^2}}+ C C_0^2 R^{d+3}\frac{\varepsilon^2 R^{d}}{\rho^{d+2}}+\rho R^{d+2} E(\pi,R).
	\end{align*}
    Choosing $\rho=\tau R$ for $\tau$ sufficiently small, the first estimate holds. A similar argument using Markov's inequality gives the second estimate.
\end{proof}

\subsection{Local quasiminimality of regularised optimal transport}
In this section, we recall the local quasiminimality properties of regularised optimal transport. 
\begin{lemma}\label{lem:localQuasiminimality}
Let $p\in (1,2]$. Assume $\lambda,\mu$ are measures with Lipschitz supports $\Omega_0,\Omega_1$. Suppose $\pi$ is a minimiser of $OT_{\e,h_p}$ satisfying Assumption 1.4. Then for any $\tau>0$,
\begin{align}\label{eq:localQuasiminimality}
E_p(R,\pi)\leq \frac 1 {R^{d+2}} W_2^2(\bar \lambda,\bar \mu)+\tau E_p(2R,\lambda,\mu)+C_\tau \frac{R_c^2}{R^2}.
\end{align}
If $p=1$ and $\pi$ is a minimiser of $OT_{\e,h_1}$ such that for some $T<1$, Assumption \ref{ass:tangencyLocal} holds in $Q_{2(T+1)R}$ and $(2TR)^{-(d+2)} \int_{\#_{2R}\cap \{|x-y|\geq TR\}}|x-y|^2 \dd \pi \leq \frac{\e^2}{T^2 R^2}+D((T+1)R,\lambda,\mu)+D_{bdy}((T+1)R)$, then \eqref{eq:localQuasiminimality} holds with an additional term $D_{bdy}((T+1)R)$ on the right-hand side.
\end{lemma}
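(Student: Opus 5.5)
The plan is a competitor construction. I would modify $\pi$ only on $\#_R$, keep $\pi\llcorner\#_R^c$, and compare energies using the minimality of $\pi$ for $OT_{\e,h_p}$. Write $\bar\lambda,\bar\mu$ for the marginals of $\pi\llcorner\#_R$. These are the restrictions of $\lambda$ to $Q_R$ plus the parts transported into or out of $Q_R$, and similarly for $\mu$. Let $\hat\pi_0$ be an optimal $W_2$-coupling of $\bar\lambda$ and $\bar\mu$. Then $\tilde\pi:=\hat\pi+\pi\llcorner\#_R^c$ is admissible for any $\hat\pi\in\Pi(\bar\lambda,\bar\mu)$.

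The plain choice $\hat\pi=\hat\pi_0$ does not work, because it may be singular with respect to $P$ and then has infinite $h_p$-cost. So I would regularise at the critical scale $R_c$. Take a block approximation (or convolution with a kernel at scale $R_c$) of $\hat\pi_0$, adapted so that the marginals remain exactly $\bar\lambda,\bar\mu$. Concretely, partition $\Omega_0\cap Q_{2R}$ and $\Omega_1\cap Q_{2R}$ into cells of diameter $\sim R_c$, each carrying mass $\gtrsim R_c^d$; the Lipschitz regularity of $\Omega_i$ and the density bound \eqref{eq:densityBound} give this. Then redistribute the mass of $\hat\pi_0$ between each pair of cells as a product measure. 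The transport cost increases by at most $\lesssim R_c^2\,\bar\lambda(\R^d)+R_c\,(\int|x-y|^2\dd\hat\pi_0)^{1/2}\bar\lambda(\R^d)^{1/2}$. Young's inequality bounds this by $\tau W_2^2(\bar\lambda,\bar\mu)+C_\tau R_c^2R^d$.

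The density of the block plan with respect to $P$ is $\lesssim R_c^{-d}$. Its $h_p$-cost is therefore $\e^2\int h_p\dd P\lesssim \e^2 R_c^{-d(p-1)}R^d= R_c^{2}R^d$, using the definition $R_c=\e^{2/(d(p-1)+2)}$, i.e. $\e^2=R_c^{d(p-1)+2}$. Minimality of $\pi$ gives $E_p(R,\pi)\cdot R^{d+2}\le\int|x-y|^2\dd\hat\pi+\e^2\int h_p(\frac{\dd\hat\pi}{\dd P})\dd P$. One cross term must be handled: $h_p$ is not additive, so the density of $\tilde\pi$ on the overlap of $\#_R$ with its complement needs care. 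Since $\#_R$ is a product-complement set, the two pieces have disjoint supports in $\R^d\times\R^d$, and the integral therefore splits.

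The main obstacle is the mass of $\bar\lambda$ that lies outside $Q_{2R}$, i.e. long trajectories. Those points are not covered by cells near $Q_R$. I would use Assumption~\ref{ass:topological} (Assumption 1.4) to confine all trajectories starting or ending in $Q_{R}$ to $Q_{2R}$, so that $\bar\lambda,\bar\mu$ live in $Q_{2R}$, where the cell construction applies. The remaining mass of $\bar\lambda$ outside $Q_R$ is bounded by Markov's inequality, giving $\lesssim \tau E_p(2R)$ after choosing the cells appropriately; this is where the $\tau E_p(2R,\lambda,\mu)$ term comes from.

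For $p=1$ I would argue the same way with $h_1=z\log z$. Block densities $\lesssim R_c^{-d}$ with $R_c=\e$ cost $\e^2\log(\e^{-d})R^d$. This logarithm must be absorbed, so I would instead take cells of size $\e$ times a large constant and bound the entropy by $\e^2 R^d$ up to constants. Here the topological confinement is replaced by the hypothesis on long trajectories. That hypothesis, together with Corollary~\ref{eq:crossingEntropic} to control the mass crossing $\partial\#_r$, gives a comparison set on which the construction works. The extra error $D_{bdy}((T+1)R)$ enters exactly through the crossing estimate of Corollary~\ref{eq:crossingEntropic}.
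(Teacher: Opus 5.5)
Your overall scheme is the right one, and it is what the interior arguments cited by the paper do. The paper's proof is only that citation, with the boundary $L^\infty$ and long-trajectory bounds substituted. The scheme is: compare $\pi$ with $\hat\pi+\pi\llcorner\#_R^c$, where $\hat\pi$ is a scale-$R_c$ block version of an optimal coupling of $\bar\lambda,\bar\mu$, and use Assumption~\ref{ass:topological} to keep everything inside $\#_{2R}$. However, two steps fail as written.

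\textbf{The $h_p$-integral does not split.}
\begin{itemize}
\item $\bar\lambda$ contains the mass of $\pi\llcorner\#_R$ with $x\notin Q_R$, and $\bar\mu$ the mass with $y\notin Q_R$. Both sit in $Q_{2R}\setminus Q_R$ near $\partial Q_R$.
\item An optimal coupling of $\bar\lambda,\bar\mu$ (a fortiori its block version) will in general pair these two pieces with each other. So $\hat\pi$ charges $\#_R^c\cap\#_{2R}$, which is where $\pi\llcorner\#_R^c$ lives with density $\rho_\pi$.
\item There the competitor has density $\rho_{\hat\pi}+\rho_\pi$. Since $h_p$ is superadditive, this cross term must be estimated. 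For $p\in(1,2]$,
\[
h_p(a+b)-h_p(b)\le \tfrac{p}{p-1}\bigl(a^p+ab^{p-1}\bigr),\qquad \e^2ab^{p-1}\le \tau\e^2 b^p+C_\tau\e^2a^p .
\]
\item The resulting $\tau\e^2\int_{\#_{2R}}\rho_\pi^p\,\dd P$ is the origin of the $\tau E_p(\pi,2R)$ term in the statement. The topological condition is what places the overlap inside $\#_{2R}$. Your attribution of that term to Markov's inequality for the outside mass does not match any error term in your construction.
\item Your pointwise bound $\rho_{\hat\pi}\lesssim R_c^{-d}$ is also false on cells outside $Q_R$. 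There $\bar\lambda(Q_i)$ and $\bar\mu(Q_j)$ can be arbitrarily small even though $\lambda(Q_i),\mu(Q_j)\gtrsim R_c^d$. What does hold, using $\dd\bar\lambda/\dd\lambda,\ \dd\bar\mu/\dd\mu\le 1$ and $N\lesssim (R/R_c)^d$ cells, is
\[
\int\rho_{\hat\pi}^p\,\dd P\le\sum_j\bar\mu(Q_j)^{2-p}\le N^{p-1}\bar\mu(\R^d)^{2-p}\lesssim R^dR_c^{-d(p-1)}.
\]
This gives $\e^2\int\rho_{\hat\pi}^p\,\dd P\lesssim R_c^2R^d$, which is all you need.
\end{itemize}

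\textbf{The $p=1$ argument does not work.}
\begin{itemize}
\item Cells of size $K\e$ give densities $\sim(K\e)^{-d}$ and entropy $\approx\e^2R^d\log((K\e)^{-d})$. A constant $K$ cannot remove the $\log(1/\e)$, and cells of $\e$-independent size destroy the transport estimate.
\item By \eqref{eq:extendedEnergy}, $E_1=E$, so the left-hand side is the bare transport cost. No bound on the competitor alone can suffice.
\item You must keep $\e^2\int_{\#_R}\rho_\pi\log\rho_\pi\,\dd P$ on the left of the minimality inequality. Then show it is at least the competitor's entropy up to $O(\e^2R^d)$ and absorbable errors, so the two leading $\e^2R^d\log(\e^{-d})$ contributions cancel.
\item That lower bound on $\pi$'s own entropy is a genuinely additional ingredient. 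It needs quantitative concentration of $\pi$ at scale $\e$, of the type furnished by the entropic long-trajectory bounds (Lemma~\ref{lem:longTrajectories}, Corollary~\ref{eq:crossingEntropic}). No choice of cells or of a comparison set can replace it.
\end{itemize}
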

\begin{proof}
The proof of the interior versions \cite[Lemma 3.8]{GvalaniKoch1} in the case $p=1$ and \cite[Section 2.1]{GvalaniKoch} can be transferred verbatim to the boundary case up to replacing $L^\infty$-estimates and control of long trajectories by their respective boundary versions from Section \ref{sec:Linfty}.
\end{proof}

\section{Boundary harmonic approximation}\label{sec:harmonixApproximation}
The aim of this section is to prove a boundary harmonic approximation result for almost-minimisers of optimal transport. Our strategy adapts the interior proof in \cite{KochOtto} with extra care regarding a number of subtleties arising from the presence of the boundary. 

Throughout this section we use the following set-up: Let $\lambda,\mu\in \mathcal M(\R^d)$ and set $\Omega_0:=\supp \,\lambda$, $ \Omega_1:=\supp\, \mu $. Fix $T\geq 1$.
For $R$ fixed, we assume the following:
\begin{itemize}
	\item[(A)] Assumption \ref{ass:tangencyLocal} holds in $Q_{TR}$.
\item[(B)] We assume either (B1) or (B2):

(B1) Average control of crossing trajectories and regular marginal: For any $\tau>0$, there are $\e_1,C_\tau>0$ such that if $E(\pi,R)\leq \e_1$, then
\begin{align}
\frac{4}{R}\int_{\frac{R}{2}}^{\frac{3R}{4}}\int_{\p(\#_r)}|x-y|^2d\pi dr=\tau E(\pi,R)+C_\tau\left(D(TR)+D_{bdy}(TR)\right),\label{c59}\\
\frac{4}{R}\int_{\frac{R}{2}}^{\frac{3R}{4}}\pi(\p(\#_r))dr\leq \tau\label{c60}.
\end{align} 

 (B2) Boundary $L^\infty$ bound: For any $\tau>0$, there is $\e_1>0$ such that if $E(\pi,R)+D(R,\lambda,\mu)\leq \e_1$, 
\begin{align}\label{c63}
	\sup_{(x,y)\in \#_{R/2}\cap\supp \pi}\frac{|y-x|}{R}\leq \tau.
\end{align} 

\item[(C)]Local almost minimality: There exists $\Lambda>0$ such that 
	\begin{align*}
		\int_{\#_{R}}|y-x|^2d\pi\leq W_2^2(\bar\lambda,\bar\mu)+\Lambda R^{d+2},
	\end{align*}
	where $\bar\lambda,\bar\mu$ are the two marginals of $\pi\llcorner\#_R$.
\end{itemize}

 \begin{remark}\label{rk:1}
The boundary $L^\infty$ bound in (B2) implies \eqref{c59} and \eqref{c60} in (B1) with $T=1$.
\end{remark}
\medskip

We further define for $r\leq 4R$,
\begin{align}\label{c2}
	D_{bdy}(r):=\sup_{Q_r'}|g_0|^2+\sup_{Q_r'}|g_1|^2,\quad\tilde Q_r:=Q_r\cap \{x_1\geq -D_{bdy}(r)^{\frac 1 2} \}.
\end{align}
Note that under Assumption \ref{ass:tangencyLocal}, we have $ D_{bdy}(r)^\frac 1 2\lesssim r\left(Lip(g_0,Q_r')+Lip(g_1,Q_r')\right):=D_{bdy}'(r)$.

Finally, we define a mollification on the boundary $\partial Q$ of a rectangle $Q$ as follows. Let $f\in L^1(\p  Q)$. Fix a face of $F$ of $\p Q$, which we identify as a rectangle in $\R^{d-1}$. Extend $\restr{f}{F}$ periodically by even reflection to $\R^{d-1}$ and set
\begin{align}\label{eq:mollification}
f^r:= \restr{f*\phi_r}{F}
\end{align}
where $\phi_r$ is an even mollifier in $\R^{d-1}$.

Up to the change of variable
\begin{align*}
	\frac{R}{4}\hat x= x,\quad \frac{R}{4}\hat y=y
\end{align*}
we shall set $R=4$. 
\begin{prop}\label{P:1}
	Given any $ \tau>0 $, under assumption (A)-(C), there exists $ \epsilon(d,\tau)>0 $ and $ C(d,\tau)<\infty $ such that the following holds. If 
	\begin{align*}
		E(\pi,4)+D(4)+D_{bdy}'(4)\leq \epsilon,
	\end{align*}
	then there exists $ R\in(2,3) $ and a  harmonic gradient $ \nabla\phi $ defined on $ \tilde Q_R $ such that
	\begin{align*}
		\int_{\#_1}|y-x-\nabla\phi(x)|^2d\pi&\leq\tau E(\pi,4)+C(D(4)+D_{bdy}'(4T)+\Lambda),\\
		\int_{\tilde Q_R}|\nabla\phi|^2&\leq C(E(\pi,4)+D(4)+D_{bdy}'(4T)).
	\end{align*}
    If (B2) holds, it is possible to set $T=1$.
Furthermore,
\begin{align}\label{e:phi-C2alpha}
\|\phi\|^2_{C^{3}(\tilde Q_1)}\leq C \left( E(\pi,4)+D(4)+[\nabla g_0]^2_{\alpha,Q_4'}+ [\nabla g_1]^2_{\alpha,Q_4'}\right).
\end{align}
More precisely, $ \phi $ solves a Poisson equation of the following form
	\begin{align*}
		-\Delta\phi=c\text{ in }\tilde Q_R,\quad \nabla\phi\cdot\nu=h\text{ on }\p\tilde Q_R,
	\end{align*}
	where $ c $ is a constant and $ h $ is a piecewise smooth function satisfying the following: 
	\begin{align*}
		&h|_{\p \tilde Q_R\cap \{x_1>-D_{bdy}^{\frac 1 2}\}}\mbox{ is smooth wherever $\nu$ is smooth;}\\
		&\nabla' h|_{\p\tilde Q_R\cap \{x_1=-D_{bdy}^{\frac 1 2}\}} \mbox{ is smooth and } h'(0)=0.
	\end{align*}
\end{prop}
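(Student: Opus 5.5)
We follow the interior harmonic approximation of \cite{KochOtto}, adapted to the box $\tilde Q_R$ whose lower face $\{x_1=-D_{bdy}(R)^{1/2}\}$ lies below both $\partial\Omega_0$ and $\partial\Omega_1$ near $0$. The argument has four steps.

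\textbf{Step 1: choice of $R$ and of the boundary flux.} We use (B1), which follows from (B2) with $T=1$ by Remark \ref{rk:1}. Averaging \eqref{c59}--\eqref{c60} over $r\in(2,3)$ and applying Chebyshev, we pick a good radius $R\in(2,3)$ with the following property: the crossing trajectories through the lateral and top faces of $\partial\tilde Q_R$ carry energy at most $\tau E+C_\tau(D+D_{bdy})$ and mass at most $\tau$. The part of $\partial\tilde Q_R$ on $\{x_1=-D_{bdy}^{1/2}\}$ lies outside $\Omega_0\cup\Omega_1$, so no trajectory crosses it.

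We then define the Eulerian flux density $f$ on $\partial\tilde Q_R$. On the lateral and top faces it is the normal component of the flux $j=\int \delta_{X(t)}\dot X(t)\,dt\,d\pi$. On the bottom face we set $f=0$; alternatively, we encode there the discrepancy between $\Omega_0\cap\tilde Q_R$ and $\Omega_1\cap\tilde Q_R$, which has size $\lesssim D_{bdy}'$. We mollify $f$ at scale $r$ via \eqref{eq:mollification}. The constant $c$ is fixed by compatibility, and is controlled by $D(4)$ together with the volume mismatch $|\Omega_0\cap\tilde Q_R|-|\Omega_1\cap\tilde Q_R|\lesssim D_{bdy}'$.

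\textbf{Step 2: the Poisson problem.} We solve $-\Delta\phi=c$ in $\tilde Q_R$ with $\nabla\phi\cdot\nu=f^r$ on $\partial\tilde Q_R$. The energy estimate and \eqref{c52} give the bound on $\int|\nabla\phi|^2$. For \eqref{e:phi-C2alpha}, we use \eqref{c53} on the subbox $\tilde Q_1$ adjacent to the bottom face, which is flat and carries smooth data $h$ with $\nabla'h(0)=0$. This yields the $C^3$ bound in terms of the $L^2$ data plus the $C^2$ norm on the bottom face. The latter is controlled by $[\nabla g_i]_{\alpha}$, since the bottom data only see the graphs $g_i$ through a smoothed normal displacement.

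\textbf{Step 3: energy comparison.} We build a competitor for $\pi\llcorner\#_R$ as in \cite{KochOtto}. We compare $\lambda$, $\mu$ with $\kappa\,dx$ on $\Omega_i\cap Q_R$ at cost $D$. We transport $\kappa\,dx\llcorner\Omega_0$ along the flow of $\nabla\phi$, and we glue the boundary flux using the crossing trajectories. The cost of the region between $\partial\Omega_i$ and the bottom face is $O(D_{bdy}')$, since $\phi$ is defined on the larger box $\tilde Q_R$. Using (C) then gives
\begin{align*}
\int_{\#_R}|x-y|^2d\pi\le\int_{\tilde Q_R}|\nabla\phi|^2+\tau E+C(D+D_{bdy}'+\Lambda).
\end{align*}

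\textbf{Step 4: the orthogonality identity.} Expand $|y-x-\nabla\phi(x)|^2=|y-x|^2-2\langle y-x,\nabla\phi\rangle+|\nabla\phi|^2$. Rewrite the cross term as $\int\nabla\phi\cdot dj$ up to the errors $\sup|\nabla^2\phi|\int|x-y|^2\,d\pi$ (localised in the interior by \eqref{c52} and Step 2). Integrate by parts using the Neumann data: this produces $\int_{\tilde Q_R}|\nabla\phi|^2$, plus mollification errors and a term $c\int\phi(\lambda-\mu)$. Combined with Step 3, this yields the first estimate.

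The main obstacle is Step 3 near the bottom face. There the competitor must respect the supports $\Omega_0$ and $\Omega_1$ rather than the box, and $\nabla\phi\cdot\nu$ need not vanish on $\partial\Omega_i$. We expect to handle this by first flattening through $x\mapsto(x_1-g_i(x'),x')$, at cost $D_{bdy}'(4T)$, and then placing the Neumann problem on the flat face.
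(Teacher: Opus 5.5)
There is a genuine gap in your choice of Neumann data. You take as boundary flux the normal trace of $j$ on $\partial\tilde Q_R$, which is essentially the exit-minus-entry measures $g_R-f_R$, and you mollify it at scale $r$. These are arbitrary measures on $\partial Q_R$. On a good radius their mass is of order $E(\pi,4)^{1/2}$, and nothing controls their density. They need not even lie in $H^{-1/2}(\partial\tilde Q_R)$, so the unmollified Neumann problem may have infinite energy. Two things then go wrong.

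First, $\|f_R^r\|_{L^2}^2$ is only bounded by $r^{-(d-1)}f_R(\partial Q_R)^2\lesssim r^{-(d-1)}E$. The sup- and H\"older bounds on $\nabla\phi^r$ carry further negative powers of $r$.

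Second, and decisively, the competitor in your Step 3 must pass from $\lambda\llcorner Q_R+f_R$ to $\lambda\llcorner Q_R+f_R^r$, and likewise for $g_R$. This costs $W_2^2(f_R,f_R^r)$, which in general is only of size $r^2 f_R(\partial Q_R)\sim r^2E^{1/2}$. That is not linear in $E$. Making it at most $\tau E$ forces $r\lesssim\tau^{1/2}E^{1/4}$. The first point then destroys, as $E\to0$, both the bound $\int_{\tilde Q_R}|\nabla\phi|^2\le C(E+\dots)$ and every error term in your Steps 3--4. So the scheme never reaches the regime where $r$ is fixed depending only on $\tau$ and all errors are $\tau E$.

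The missing ingredient is the regularisation of the boundary flux from Goldman--Otto and Koch--Otto, which is Lemma~\ref{L:2} in the paper.
\begin{itemize}
\item For each $R\in(2,3)$ one constructs \emph{densities} $\bar f_R,\bar g_R$ on $\partial\tilde Q_R$. The construction follows each exiting trajectory to $y$, couples $\mu\llcorner Q_4$ optimally to $\kappa_{\mu,4}dx$, and projects the resulting $z$-distribution, which has bounded density, onto $\partial Q_R$.
\item These satisfy $W_2^2(f_R,\bar f_R)+W_2^2(g_R,\bar g_R)\lesssim\int_{\partial(\#_R)}|x-y|^2d\pi+D(4)$.
\item They also satisfy $\int_2^3\int_{\partial\tilde Q_R}(\bar f_R^2+\bar g_R^2)\,dR\lesssim E(\pi,4)+D(4)$.
\item The good radius must be chosen so that this $L^2$ bound holds as well.
\item One then solves for $\phi$ with data $\bar g_R-\bar f_R$ plus the bottom layers $\hat g_{1,R}-\hat g_{0,R}$.
\end{itemize}
With $L^2$ data the mollification error is $\int|\nabla\phi-\nabla\phi^r|^2\lesssim r(E+D)$, which is linear in $E$. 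That linearity is what lets the argument close.

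Two smaller points. First, the obstacle you flag in Step 3 is not real. $W_2$ only sees the marginals, so the Benamou--Brenier competitor may run through $\tilde Q_R\setminus\Omega_i$. The paper simply replaces $\kappa\,dx\llcorner(\Omega_i\cap Q_R)$ by $\hat\kappa\,dx\llcorner\tilde Q_R+\hat g_{i,R}$ at cost $\lesssim D_{bdy}$ (Lemma~\ref{L:1}). The boundary-specific work lies instead in the orthogonality step, where the mismatch in the slab $\{|x_1|\le D_{bdy}^{1/2}\}$ is controlled by a thin-slab Poincar\'e inequality. Second, in Step 4 you cannot use $\sup|\nabla^2\phi|$ up to the edges of the box. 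You only have $[\nabla\phi^r]_\alpha$, which you combine with $\int|x-y|^{2\alpha}d\pi$.
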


Before we begin the proof we introduce some more notation. We denote
\begin{align*}
	\hat Q_R&:=Q_R\cap \{x_1\geq D_{bdy}^{\frac 1 2} \}.
\end{align*}
Further, let
\begin{align}
\hat\kappa_{\lambda,R}:=\frac{\kappa_{\lambda,R}|\hat Q_R|}{|\tilde Q_R|},\quad \hat\kappa_{\mu,R}:=\frac{\kappa_{\mu,R}|\hat Q_R|}{|\tilde Q_R|}.\label{c6}
\end{align}
Finally, we will use the short-hand notation
\begin{align*}
\Omega_{0,R}:=Q_R\cap \Omega_0,\quad \Omega_{1,R}:=Q_R\cap \Omega_1.
\end{align*}

\subsection{Choosing Data for the Poisson Equation}
In this subsection, we construct boundary data, which we will use to define $\phi$ in Proposition \ref{P:1}. The construction is analogous to that in \cite[Lemma 4]{KochOtto}.

For $(x,y)\in \supp\,\pi$, define the entering and exiting time of $X(t)$ as
\begin{align*}
    \sigma =& \sigma(x,y,R):= \min\{t\in [0,1]\colon X(t)\in \bar Q_R\},\\
    \tau =& \tau(x,y,R):= \max\{t\in [0,1]\colon X(t)\in \bar Q_R\}.
\end{align*}
We use $\sigma,\tau$ to define measures $f_R,g_R$ via
\begin{align}
\int \zeta d f_R := \int_{\#_R\cap \{X(\sigma)\in \partial Q_R\}} \zeta(X(\sigma))d\pi,\quad \int \zeta d g_R:= \int_{\#_R\cap \{X(\tau)\in \partial Q_R\}} \zeta(X(\tau))d\pi.
\end{align}
\begin{lemma}\label{L:2} Assume $D(4)\lesssim 1$. Under assumptions (A)-(C), for each $R\in(2,3)$, there exists a non-negative function $\bar g_R$ defined in $\p \tilde Q_R$ such that
\begin{align}
W_2^2(f_R,\bar f_R)+W_2^2(g_R,\bar g_R)&\lesssim \int_{\p(\#_R)}|y-x|^2d\pi+D(4),\label{c61}\\ \int_{2}^3\int_{\p\tilde Q_R}\bar f_R^2+\bar{g}_R^2dR&\lesssim E(\pi,4)+D(4).\label{c62}
\end{align}
\end{lemma}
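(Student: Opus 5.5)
We follow \cite[Lemma 4]{KochOtto}, adding one step that moves the boundary data onto $\p\tilde Q_R$. The statement as written names only $\bar g_R$, but \eqref{c61}--\eqref{c62} involve both $\bar f_R$ and $\bar g_R$. We therefore construct both, each a non-negative function on $\p\tilde Q_R$. By symmetry it suffices to treat $f_R$.

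\textbf{Step 1: moving the mass onto $\p\tilde Q_R$.} The measure $f_R$ lives on $\p Q_R$. Mass on the parts of $\p Q_R$ lying in $\{x_1<-D_{bdy}(R)^{1/2}\}$ cannot come from $\Omega_0$ or $\Omega_1$, except through trajectories that only pass through that region. We define a map $P\colon \p Q_R\to\p\tilde Q_R$:
\begin{itemize}
\item on the lateral faces, $P$ is the projection in the $e_1$-direction onto $\{x_1\ge -D_{bdy}^{1/2}\}$;
\item on the face $\{x_1=-R\}$, $P$ translates to $\{x_1=-D_{bdy}^{1/2}\}$;
\item elsewhere $P$ is the identity.
\end{itemize}
Set $f_R':=P_\# f_R$. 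A point $X(\sigma)$ with $x_1$-coordinate below $-D_{bdy}^{1/2}$ lies on a segment whose endpoints both lie in $\Omega_0\cup\Omega_1$, hence in $\{x_1\ge -D_{bdy}^{1/2}\}$. The displacement $|X(\sigma)-PX(\sigma)|$ is therefore bounded by $|x-y|$. Using the coupling $(\mathrm{id},P)$ gives
\[
W_2^2(f_R,f_R')\le\int_{\p(\#_R)}|x-y|^2\,d\pi .
\]

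\textbf{Step 2: decomposition into near and far parts.} As in \cite{KochOtto}, split $\pi\llcorner\p(\#_R)$ by trajectory length.
\begin{itemize}
\item For short trajectories, $|X(\sigma)-x|\le|x-y|$, and $x$ lies within distance $|x-y|$ of $\p Q_R$. This gives a coupling of $f_R$ with $\lambda$ restricted to a thin layer around $\p Q_R$, at cost $\int_{\p(\#_R)}|x-y|^2d\pi$.
\item We then replace $\lambda$ by $\kappa_{\lambda}\,dx\llcorner\Omega_0$, using $D(4)$.
\item We use the density bound \eqref{eq:densityBound} so that the layer mass, pushed to $\p\tilde Q_R$ by the normal-coordinate projection, has density controlled by $\int\mathbf 1_{\{|x-y|\ge \mathrm{dist}(x,\p Q_R)\}}d\pi$.
\end{itemize}
Concretely, $\bar f_R$ is defined as the push-forward of $\pi\llcorner\{(x,y)\in\p(\#_R)\}$ under $x\mapsto \Pi_{\p\tilde Q_R}(x)$. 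The relevant points are $x\in\Omega_0\cap\{x_1\ge g_0\}$, so after the tangency normalisation $\Pi$ is Lipschitz with constant $1+O(D'_{bdy})$. If this push-forward is not absolutely continuous, we instead use the smeared version of \cite{KochOtto}: spread each trajectory's contribution uniformly over a boundary patch of size $|x-y|$. Either way the triangle inequality gives \eqref{c61}.

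\textbf{Step 3: the averaged $L^2$ bound \eqref{c62}.} This is where the work lies. Integrating over $R\in(2,3)$ and using Fubini, a trajectory contributes to $\p(\#_R)$ only for $R$ in a set of length at most $\min(1,|x-y|)$. The $L^2$ norm of the smeared density is bounded, pointwise on the boundary, by the local mass of such trajectories divided by the patch area. Via the $L^\infty$/crossing control in (B), the averaged $L^2$ norm is then bounded by
\[
\int_{\#_4}|x-y|^2\,d\pi+D(4)\lesssim E(\pi,4)+D(4).
\]
The main obstacle is the corner of $\tilde Q_R$ at $\{x_1=-D_{bdy}^{1/2}\}$ and the non-flat boundary. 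Near these points the area of the patch meeting $\Omega_0$ could degenerate. We handle this with the Lipschitz bound $D'_{bdy}(4)\ll1$: it guarantees that each patch of size $r$ centred on $\p\tilde Q_R$ meets $\Omega_0$ in a set of measure $\gtrsim r^{d}$, which is what is needed to keep the density estimates uniform.
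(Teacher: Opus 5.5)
Your construction is on a workable track, and in one respect it is simpler than the paper's. The $x$-marginal $\nu_R$ of $\pi\llcorner\{X(\sigma)\in\p Q_R\}$ satisfies $\nu_R\le\lambda\le c$, so you can project the endpoints themselves. The paper instead follows \cite[Lemma 4]{KochOtto} (treating $g_R$): it couples $\mu\llcorner Q_4$ optimally with $\kappa_{\mu,4}\,dx\llcorner\Omega_{1,4}$, projects the coupled variable $z$, and pays $D(4)$ for that coupling. Your route would not even need the $D(4)$. You must restrict to trajectories with $X(\sigma)\in\p Q_R$ rather than all of $\p(\#_R)$, or $f_R$ and $\bar f_R$ have different masses.

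The genuine gap is Step 3, which you rightly call the heart of the matter but which contains no mechanism producing \eqref{c62}. ``Local mass divided by patch area'' is only a pointwise bound on $\bar f_R$. You never explain how contributions from trajectories of different lengths combine into an $L^2$ bound, and the inputs you name are the wrong ones. Assumption (B) is not needed here, and under (B1) it gives no $L^\infty$ control at all. The Lipschitz constant of $\Pi$ is irrelevant for an $L^2$ bound. The obstacle you single out, a \emph{lower} bound on $|\text{patch}\cap\Omega_0|$, is beside the point. What is needed is an \emph{upper} bound on the density of the measure being projected, together with its first moment of distance to $\p Q_R$. Nothing in your construction produces the $D(4)$ you write at the end either. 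Finally, a nearest-point or normal-coordinate projection onto $\p\tilde Q_R$ sends mass from the outer edge and vertex regions onto lower-dimensional edges, so its push-forward is not in $L^2$.

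The missing idea is the one-dimensional packing inequality behind \cite[(62)]{KochOtto}. If $0\le\rho\le M$ on $[R,\infty)$ and $m=\int\rho\,dr$, then $\int(r-R)\rho\,dr\ge m^2/(2M)$, because mass $m$ is cheapest when packed against $r=R$.
\begin{enumerate}
\item Use the radial projection $x\mapsto Rx/|x|_\infty$. It pushes absolutely continuous measures to absolutely continuous ones on $\p Q_R$, and it maps $\{x_1\ge -D_{bdy}^{1/2}\}\setminus Q_R$ into $\p\tilde Q_R$.
\item Apply the inequality ray by ray to $\nu_R\le c$, using the Jacobian of $\ell^\infty$-polar coordinates. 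This gives $\int_{\p Q_R}\bar f_R^2\lesssim\int(|x|_\infty-R)\,d\nu_R\le\int_{\{X(\sigma)\in\p Q_R\}}|x-y|\,d\pi$.
\item A trajectory enters $Q_R$ only for $R$ in a set of length at most $\min(1,|x-y|)$. Fubini then yields $\int_2^3\int\bar f_R^2\,dR\lesssim\int_{\#_4}|x-y|^2\,d\pi\lesssim E(\pi,4)$, with no use of (B).
\end{enumerate}
This is precisely the paper's chain \cite[(60)]{KochOtto}, \cite[(62)]{KochOtto}, $\operatorname{ess\,sup}g_R'\lesssim1$, with $z$ replaced by the endpoint.

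Your smearing idea can also be completed for trajectories of length at most $1$, by a dyadic decomposition. Use $\|\bar f_k\|_{L^\infty}\lesssim 2^{-k}$ for lengths $\sim2^{-k}$, and pair it against $\|\bar f_j\|_{L^1}$ for the longer scales $j<k$. That route too rests on the density upper bound, which your Step 3 never invokes.
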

\begin{proof}
By symmetry it suffices to prove the statements involving $g_R$ and to construct $\bar g_R$. Let $\bar\pi$  be such that
\begin{align*}
\bar\pi\mbox{ is optimal for }W_2(\mu\llcorner Q_4,\kappa_{\mu, 4}dx\llcorner\Omega_{1,4}).
\end{align*}
and then extend $\bar\pi$ to $\R^d\times\R^d$ by identity. By disintegration 
\begin{align*}
\bar \pi(dydz)=\bar\pi(dz|y)\mu(dy)\quad\mbox{where}\quad \{\bar\pi(dz|y)\}_{y\in Q_4} \mbox{are probability measures},
\end{align*}
so that we can define a measure on $\R^d\times\R^d\times\R^d$ by
\begin{align*}
\tilde\pi:=\bar\pi(dz|y)\pi(dxdy)
\end{align*}
which has the property that 
\begin{align*}
\mbox{marginal of $\tilde\pi$ w.~r.~t.~$(x,y)=\pi$},\\
\mbox{marginal of $\tilde\pi$ w.~r.~t.~$(y,z)=\bar\pi$}.
\end{align*}
From $\tilde\pi$, we define $g'_R$ which tracks the distribution of $z$ for which $(x,y,z)\in\supp\pi$ and $X(\tau)\in\p Q_R$, more precisely,
\begin{align*}
\int\zeta dg_R'=\int_{\#_R\cap \{X(\tau)\in Q_R\}\times \R^d}\zeta(z)\tilde\pi(dxdydz).
\end{align*}
Now let $e_1=(1,0,\ldots,0)\in Q_R$ and let $P_R:\R^d\mapsto\p Q_R$ be the projection of $\R^d$ onto $\p Q_R$ with respect to $e_1$. We define $\bar g_R$ via
\begin{align*}
\int\zeta d\bar g_R:=\int\zeta(P_R z)g'_R(dz).
\end{align*}

For the proof \eqref{c61}, one observes that the following is an admissible coupling of $W_2(g_R,\bar g_R)$, 
\begin{align*}
\int_{\#_R\cap\{X(\tau)\in\p Q_R\}}\zeta(X(\tau),P_R z)d\tilde\pi,
\end{align*}
so that
\begin{align*}
W_2^2(g_R,\bar g_R)\lesssim \int_{\#_R\cap\{X(\tau)\in\p Q_R\}}|x-y|^2d\pi+D\lesssim\int_{\p(\#_R)}|x-y|^2d\pi+D(4).
\end{align*}

\medskip

For the proof of \eqref{c62}, we claim the following estimates:
\begin{align*}
\int_{2}^{3}\int ||z|-R|dg_R'dR\lesssim E(\pi,4)+D(4),\\
\int_{\p Q_R}\bar g_R^2\lesssim \esssup g'_R\int||z|-R|g'_Rdz.\\
\esssup g'_R\lesssim 1
\end{align*}
Combining these estimates implies \eqref{c62}. The first estimate is \cite[(60)]{KochOtto}. If $g_R'$ has compact support, the second estimate is \cite[(62)]{KochOtto}.  Depending on whether (B1) or (B2) holds, we have 
\begin{align*}
	\text{either }\supp g_R'\subset \supp\, \mu \quad\text{or}\quad \supp\, g_R'\subset Q_4.
\end{align*}
Regarding the third estimate, if assumption (B1) is satisfied, then  for $\zeta \geq 0$,
\begin{align*}
\int\zeta dg_R'\leq \int\zeta(z)d\mu(z)\quad\text{ so that }g_R'\leq \|\mu\|_{L^\infty}.
\end{align*}
If (B2) is satisfied, then we have that $y\in B_4$ for any $y$ such that $X(\tau)\in \p Q_R$ for some $R\in(2,3) $. Since $\bar\pi(Q_4\times Q_4^c)=0$, we see that for $\zeta \geq 0$,
\begin{align*}
\int\zeta d g_R'\leq\int_{\{z\in Q_4\}}\zeta(z)\bar\pi(dydz)=\kappa_{\mu, 4}\int_{Q_4}\zeta(z)dz\text{ for }\zeta\geq 0,\quad \text{ so that } g_R'\lesssim \kappa_{\mu,4}.
\end{align*}
\end{proof}

\subsection{Choice of good radius}\label{sec:goodRadius}
In this section, we make a choice of $R\in (2,3)$ such that a number of crucial estimates hold.

Our goal is to find $R\in (2,3)$ with the property that for any $\tau>0$ there exists $\e_1>0$ such that if $E(\pi,4)+D(4)+D_{bdy}(4) \leq \e_1$, then the following estimates hold:

First of all, we use (B1) to ensure that
\begin{align}\label{eq:goodCrossing}
\int_{\p(\#_R)}|x-y|^2d\pi dr\leq& \tau(E(\pi,4)+C_\tau(D(4T)+D_{bdy}(4T))\\
\label{eq:goodCrossingMeasure}
\pi(\p(\#_R))\leq& \tau.
\end{align}

Integrating \eqref{c61} over $R\in (2,3)$, we find using (B1),
\begin{align*}
\int_2^3 W_2^2(f_R,\bar f_R)+W_2^2(g_R,\bar g_R)\lesssim& \int_2^3 \int_{\partial(\#_R)} |y-x|^2 d\pi + D(4)\\
\leq& \tau E(\pi,4)+D((4)+C_\tau(D(4)+D_{bdy}(4T)).
\end{align*}
Hence, in light of Lemma \ref{L:2}, we may ensure
\begin{align}
W_2^2(f_R,\bar f_R)+W_2^2(g_R,\bar g_R)\leq \tau E(\pi,4)+C_\tau(D(4)+D_{bdy}(4T))\label{eq:goodWasserstein}\\
    \int_{\partial \tilde Q_R} \bar f_R^2+\bar g_R^2 \lesssim E(\pi,4)+D(4)\label{eq:goodBoundaryData}.
\end{align}

We claim that we may furthermore ensure that 
\begin{align}
&W_2^2(\lambda \llcorner Q_R,\hat\kappa_{\lambda,R}\llcorner \tilde Q_R+\hat g_{1,R})+(\hat \kappa_{\lambda,R}-1)^2+ W_2^2(\mu \llcorner Q_R,\hat\kappa_{\mu,R}\llcorner \tilde Q_R+\hat g_{1,R})+(\hat \kappa_{\lambda,R}-1)^2 \nonumber\\
\lesssim& D(4)+D_{bdy}(4),\label{eq:goodData}\\
&W_2^2(\lambda\llcorner Q_R,\kappa_{\lambda,R}dx\llcorner\Omega_{0,R})+(\kappa_{\lambda,R}-1)^2+W_2^2(\mu\llcorner Q_R,\kappa_{\mu,R}dx\llcorner\Omega_{1,R})+(\kappa_{\mu,R}-1)^2\nonumber\\
\lesssim& D(4)+D_{bdy}(4).
\label{eq:goodData2}
\end{align}
where on $\p\tilde Q_R\cap\{x_1=-D_{bdy}^{\frac 1 2}\}$,
\begin{align}
	\hat g_{0,R}:=\kappa_{\lambda,R}(D_{bdy}^{\frac 1 2}-g_0(x')) dx'\llcorner Q_R\cap \{x_1=-D_{bdy}^{\frac 1 2} \}\label{hatg-def}\\
	\hat g_{1,R}:=\kappa_{\mu,R}(D_{bdy}^{\frac 1 2}-g_1(x')) dx'\llcorner Q_R\cap \{x_1=-D_{bdy}^{\frac 1 2} \}.\nonumber
\end{align}
\eqref{eq:goodData} is a consequence of the following estimate, which is the analog of \cite[Lemma 6]{KochOtto}.
\begin{lemma}\label{L:1}
Assume that $D(4)+D_{bdy}(4)\ll1$, then
\begin{align*}
&\int_{2}^{3}W_2^2(\lambda\llcorner Q_R,\hat\kappa_{\lambda,R}dx\llcorner\tilde Q_R+\hat g_{0,R})+(\hat\kappa_{\lambda,R}-1)^2dR\\
&\quad+\int_{2}^{3}W_2^2(\mu\llcorner Q_R,\hat\kappa_{\mu,R}dx\llcorner\tilde Q_R+\hat g_{1,R})+(\hat\kappa_{\mu,R}-1)^2dR\lesssim D(4)+D_{bdy}(4).
\end{align*}
\end{lemma}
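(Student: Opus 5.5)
By symmetry we only treat the $\lambda$-terms; the $\mu$-terms are handled identically with $g_1$ in place of $g_0$. The plan is a triangle inequality through the intermediate measure $\kappa_{\lambda,R}\,dx\llcorner\Omega_{0,R}$:
\begin{align*}
W_2(\lambda\llcorner Q_R,\hat\kappa_{\lambda,R}dx\llcorner\tilde Q_R+\hat g_{0,R})\leq W_2(\lambda\llcorner Q_R,\kappa_{\lambda,R}dx\llcorner\Omega_{0,R})+W_2(\kappa_{\lambda,R}dx\llcorner\Omega_{0,R},\hat\kappa_{\lambda,R}dx\llcorner\tilde Q_R+\hat g_{0,R}).
\end{align*}
Before using this, one checks that the masses agree. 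The mass of $\hat\kappa_{\lambda,R}dx\llcorner\tilde Q_R$ is $\kappa_{\lambda,R}|\hat Q_R|$. The mass of $\hat g_{0,R}$ is $\kappa_{\lambda,R}\int_{Q_R'}(D_{bdy}^{1/2}-g_0)$. Their sum equals $\kappa_{\lambda,R}|\Omega_{0,R}|$, up to the normalisation convention on $\hat Q_R$ versus $\tilde Q_R$. If the conventions do not match exactly, I would adjust the definition by an $O(D_{bdy}^{1/2})$ mass error, which is absorbed below.

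\textbf{First term.} Following \cite[Lemma 6]{KochOtto}, I would average over $R\in(2,3)$. Fix the optimal plan $\bar\pi$ for $W_2(\lambda\llcorner Q_4,\kappa_{\lambda,4}dx\llcorner\Omega_{0,4})$. Restricting it to pairs with $x\in Q_R$ gives a coupling of $\lambda\llcorner Q_R$ with a sub-measure of $\kappa_{\lambda,4}\Omega_{0,4}$. That sub-measure differs from $\kappa_{\lambda,4}dx\llcorner\Omega_{0,R}$ only through mass crossing $\partial Q_R$.

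The crossing mass is controlled in the standard way. Transport it along the $e_1$-projection onto $\partial Q_R$, as in Lemma \ref{L:2}. The cost is bounded by $\int||z|-R|$-type quantities, and integrating in $R$ gives $\lesssim D(4)$, exactly as in \cite[(60)--(62)]{KochOtto}. Densities stay bounded because $\lambda\le c$.

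The constant changes from $\kappa_{\lambda,4}$ to $\kappa_{\lambda,R}$. This costs $(\kappa_{\lambda,R}-\kappa_{\lambda,4})^2$, and that quantity is itself controlled by the crossing mass and $D(4)$. The same argument bounds $(\kappa_{\lambda,R}-1)^2$. Finally, since $|\hat Q_R|/|\tilde Q_R|=1+O(D_{bdy}^{1/2})$, we get $(\hat\kappa_{\lambda,R}-1)^2\lesssim(\kappa_{\lambda,R}-1)^2+D_{bdy}(4)$. This yields \eqref{eq:goodData2} as well.

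\textbf{Second term.} Here I would build an explicit vertical coupling. On $\hat Q_R$ the two densities are constants differing by the factor $|\hat Q_R|/|\tilde Q_R|=1+O(D_{bdy}^{1/2})$. In the boundary strip $\{|x_1|\le D_{bdy}^{1/2}\}$, for each $x'\in Q_R'$, the fibre $\{x_1\in(g_0(x'),D_{bdy}^{1/2})\}$ carries mass $\kappa_{\lambda,R}(D_{bdy}^{1/2}-g_0(x'))$.

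The coupling works fibre by fibre. Send most of this fibre mass to the point $(-D_{bdy}^{1/2},x')$, where $\hat g_{0,R}$ places exactly this amount. The transport distance is $\lesssim D_{bdy}^{1/2}$, so the cost is $\lesssim D_{bdy}^{1/2}\cdot D_{bdy}\cdot|Q_R'|$. The residual density mismatch $\hat\kappa_{\lambda,R}-\kappa_{\lambda,R}=O(D_{bdy}^{1/2})$ on $\tilde Q_R$ is redistributed by a one-dimensional rearrangement along each vertical fibre of length $O(1)$. Its cost is bounded by the mass moved times the squared distance. I expect this to be $\lesssim D_{bdy}^{1/2}$ if done crudely; to reach $\lesssim D_{bdy}$ one uses that the mismatch is a uniform rescaling along fibres. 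That rescaling moves points by $O(D_{bdy}^{1/2})$ and therefore costs $O(D_{bdy})$.

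\textbf{Main obstacle.} The hard part is this last bookkeeping. Nominally $\hat\kappa$ is chosen so that only the boundary layer is reshuffled, and one must verify that the fibrewise coupling cost is $O(D_{bdy})$ and not merely $O(D_{bdy}^{1/2})$. I would realise it as the map $x_1\mapsto$ affine rescaling of $(g_0(x'),R)$ onto $(-D_{bdy}^{1/2},R)$ combined with a point mass at the bottom. I would then bound the displacement pointwise by $CD_{bdy}^{1/2}$, using $|g_0|\le D_{bdy}^{1/2}$.
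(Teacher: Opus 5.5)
Your proposal is correct in outline, and your second term is handled exactly as in the paper. The paper uses the affine stretch of $\hat Q_R$ onto $\tilde Q_R$ in the $x_1$-direction (displacement $O(D_{bdy}^{1/2})$, cost $O(D_{bdy})$). It then collapses the layer $\Omega_{0,R}\setminus\hat Q_R$ onto the face by $(x_1,x')\mapsto(-D_{bdy}^{1/2},x')$, at cost $O(D_{bdy}^{3/2})$.

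Where you differ is the order of operations.

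The paper first runs the restriction argument of \cite{KochOtto} with the modified reference measure $\hat\kappa_{\lambda,R}dx\llcorner\tilde Q_R+\hat g_{0,R}$. This reduces the $R$-average to the single scale-$4$ quantity $W_2^2(\lambda\llcorner\Omega_{0,4},\hat\kappa_{\lambda,4}dx\llcorner\tilde Q_4+\hat g_{0,4})+(\hat\kappa_{\lambda,4}-1)^2$. Only then does it use the triangle inequality and the explicit maps, once, at $R=4$.

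You instead split at every $R$ and apply the restriction argument to the pair $(\lambda,\kappa\,dx\llcorner\Omega_0)$. This gives \eqref{c19} directly, whereas the paper only deduces it afterwards, with a $D_{bdy}(4)$ loss. The cost is that you must run the Koch--Otto argument on the curved domain $\Omega_{0,R}$ rather than on a rectangle, and your sketch passes over two issues this creates:
\begin{enumerate}
\item The crossing mass, once projected onto $\partial Q_R$, need not land in $\overline{\Omega_0}$ near the corners where $\partial\Omega_0$ meets the lateral faces. For the rectangle $\tilde Q_R\supset\Omega_{0,R}$ it lands in $\partial\tilde Q_R$ automatically, because $z_1>g_0(z')\geq-D_{bdy}^{1/2}$ survives clamping of the coordinates.
\item The Neumann/Benamou--Brenier step of \cite[Lemma 3]{KochOtto} has to be posed on a Lipschitz domain with corners.
\end{enumerate}
Both are repairable. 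For the first, shift the offending boundary mass vertically by $O(D_{bdy}^{1/2})$; for the second, use energy estimates that are uniform in the Lipschitz character. The resulting $D_{bdy}(4)$ error is tolerated by the lemma. Avoiding these issues is precisely why the paper introduces $\tilde Q_R$ and $\hat\kappa$.

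Two small points.
\begin{itemize}
\item No mass correction is needed. Since $|\Omega_{0,R}\setminus\hat Q_R|=\int_{Q_R'}(D_{bdy}^{1/2}-g_0)\,dx'$ and $\lambda(Q_R)=\kappa_{\lambda,R}|\Omega_{0,R}|$, the masses agree exactly.
\item In your last paragraph, the rescaling should map $(D_{bdy}^{1/2},R)$, not $(g_0(x'),R)$, onto $(-D_{bdy}^{1/2},R)$. Rescaling the whole fibre would give an $x'$-dependent bulk density and no face part. Your preceding paragraph already describes the right construction.
\end{itemize}
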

\begin{proof}
By symmetry it suffices to prove the estimate for the terms involving $\lambda$. As $\hat\kappa_{\lambda, R}dx\llcorner\tilde Q_R$ is uniformly positive on $\tilde Q_R$ by definition, following line-by-line the argument in [KO, Corollary 2 and Lemma 6], we find
\begin{align*}
&\int_{2}^{3}W_2^2(\lambda\llcorner Q_R,\hat\kappa_{\lambda,R}dx\llcorner\tilde Q_R+\hat g_{0,R})+(\hat\kappa_{\lambda,R}-1)^2dR\\
\lesssim& W_2^2(\lambda \llcorner \Omega_{0,4}\hat \kappa_{\lambda,4}dx \llcorner \tilde Q_4+\hat g_{0,4})+(\hat \kappa_{\lambda,4}-1)^2.
\end{align*}

We now first note that 
\begin{align*}
	|\hat\kappa_{\lambda,4}-\kappa_{\lambda,4}|^2=\kappa_{\lambda,4}^2\frac{||\hat Q_4|-|\tilde Q_4||^2}{|\tilde Q_4|^2}\lesssim D_{bdy}(4).
\end{align*}
Next, we estimate by triangle inequality,
\begin{align*}
&W_2(\lambda\llcorner\Omega_{0,4},\hat\kappa_{\lambda,4}dx\llcorner\tilde Q_4+\hat g_{0,4})\\
&\quad\leq W_2(\lambda\llcorner\Omega_{0,4},\kappa_{\lambda,4}\llcorner\Omega_{0,4})+W_2(\kappa_{\lambda,4}dx\llcorner\Omega_{0,4},\hat\kappa_{\lambda,4}dx\llcorner\tilde Q_R+\hat g_{0,4})\\
&\quad\leq D^{\frac{1}{2}}+W_2(\kappa_{\lambda,4}dx\llcorner\hat Q_4,\hat\kappa_{\lambda,4}dx\llcorner \tilde Q_4)+W_2(\kappa_{\lambda,4}dx\llcorner \Omega_{0,4}\backslash\hat Q_4,\hat g_{0,4}).
\end{align*}
Note that the map
\begin{align*}
	T(x_1,x')=\left(4+\frac{4+D_{bdy}^\frac 12}{4-D_{bdy}^\frac 1 2}(x_1-4),x' \right),
\end{align*}
satisfies $T_{\# }\kappa_{\lambda,4} \dx \llcorner \hat Q_4 = \hat \kappa_{\lambda,4}\dx \llcorner \tilde Q_4$, so that
\begin{align*}
	&W_2^2(\kappa_{\lambda,4}dx\llcorner\hat Q_4,\hat\kappa_{\lambda,4}dx\llcorner\tilde Q_4)&\leq \int_{\hat Q_4}|x-T(x)|^2\kappa_{\lambda,4}dx\\
    \lesssim& (1-\frac{4+D_{bdy}^\frac 1 2}{4-D_{bdy}^\frac 1 2})^2\overset{D_{bdy}(4)\ll 1}{\lesssim} D_{bdy}(4).
\end{align*}
Further, $\tilde T(x_1,x')=(-D_{bdy}^\frac 1 2,x')$ satisfies $\tilde T_{\# \kappa_{\lambda,4} dx \llcorner \Omega_{0,4}\setminus\hat Q_4} = \hat g_{0,4}$, so that
\begin{align*}
W_2^2(\kappa_{\lambda,4}dx\llcorner\Omega_{0,4}\setminus\hat Q_4,\hat g_{0,4})\leq& \int_{\Omega_{0,4}\setminus\hat Q_4}|x-\tilde T(x)|^2\kappa_{\lambda,4}dx\\
\lesssim& D_{bdy}(4) |\Omega_{0,4}\setminus \hat Q_4|\lesssim D_{bdy}(4)^\frac 3 2.
\end{align*}
Collecting estimates finishes the proof.
\end{proof}
We  remark that the proof of the second inequality in Lemma \ref{L:1} shows in the data term $D(R)$, we may measure the flatness of $\lambda\llcorner Q_R$ using either $\kappa_{\lambda, R}dx\llcorner\Omega_{0,R}$ or $\hat\kappa_{\lambda,R}dx\llcorner\tilde Q_R+\hat g_{0,R}$ up to a $O(D_{bdy}(4))$ error. More precisely, the above lemma also yields that provided $D+D_{bdy}(4)\ll 1$,
\begin{align}\label{c19}
\int_{2}^{3}W_2^2(\lambda\llcorner Q_R,\kappa_{\lambda,R}dx\llcorner\Omega_{0,R})+(\kappa_{\lambda,R}-1)^2dR\lesssim D(4)+D_{bdy}(4),
\end{align}
justifying \eqref{eq:goodData2}.

For later use, we explicitly record the following fact contained in the above proof.
\begin{corollary}\label{cor:changingCubes}
Assume Assumption \ref{ass:tangencyLocal} holds in $Q_R$. If $D_{bdy}(R)\ll 1$, then there is $C>0$ such that
\begin{align*}
D(R)\leq W_2^2(\lambda \llcorner \tilde Q_R+\hat f_R,\hat \kappa_{\lambda,R} dx \llcorner \tilde Q_R+\hat g_{0,R})+W_2^2(\mu \llcorner Q_R+\hat g_R,\hat \kappa_{\mu,R} dx \llcorner \tilde Q_R+\hat g_{1,R}),
\end{align*}
where for $(D_{bdy}^\frac 1 2,x)\in Q_R$,
\begin{align}
\hat f_R(x) = \int_{\supp\,\lambda\cap \{x'<D_{bdy}^\frac 1 2\}} f(x',x) dx',\quad \hat g_R(x) = \int_{\supp\,\mu\cap \{x'<D_{bdy}^\frac 1 2\}} g(x',x) dx'.
\end{align}
\end{corollary}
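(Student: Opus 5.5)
The plan is to reverse the triangle-inequality argument in the proof of Lemma \ref{L:1}. The inequality is read with the constant $C$ announced in the statement, and, as in the remark following Lemma \ref{L:1}, up to errors of order $D_{bdy}(R)$, which the hypothesis $D_{bdy}(R)\ll 1$ lets us absorb. This last point is a reading of the statement that I cannot justify from its wording alone. By symmetry it suffices to treat the $\lambda$-terms of $D(R)$. Interpreting $\hat f_R$ as the measure obtained by collapsing the part of $\lambda\llcorner Q_R$ lying below $\{x_1=D_{bdy}^{\frac 1 2}\}$ onto the face $\{x_1=-D_{bdy}^{\frac 1 2}\}$ of $\tilde Q_R$, I would write
\begin{align*}
W_2(\lambda\llcorner Q_R,\kappa_{\lambda,R}dx\llcorner\Omega_{0,R})
\leq{}& W_2(\lambda\llcorner Q_R,\lambda\llcorner\tilde Q_R+\hat f_R)
+W_2(\lambda\llcorner\tilde Q_R+\hat f_R,\hat\kappa_{\lambda,R}dx\llcorner\tilde Q_R+\hat g_{0,R})\\
&+W_2(\hat\kappa_{\lambda,R}dx\llcorner\tilde Q_R+\hat g_{0,R},\kappa_{\lambda,R}dx\llcorner\Omega_{0,R}).
\end{align*}
The middle term is exactly the quantity on the right-hand side of the corollary.

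For the first term I would use the collapsing map $\tilde T(x_1,x')=(-D_{bdy}^{\frac 1 2},x')$ on $\Omega_{0,R}\setminus\hat Q_R$, extended by the identity on $\hat Q_R$. This gives an admissible coupling. Each point moves a distance at most $2D_{bdy}^{\frac 1 2}$, and by \eqref{eq:densityBound} the moved mass is at most $c\,|\Omega_{0,R}\setminus\hat Q_R|\lesssim D_{bdy}^{\frac 1 2}$. Hence this term squared is $\lesssim D_{bdy}(R)^{\frac 3 2}$.

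For the third term I would reuse verbatim the two explicit maps from the proof of Lemma \ref{L:1}. The affine stretch in $x_1$ pushes $\kappa_{\lambda,R}dx\llcorner\hat Q_R$ onto $\hat\kappa_{\lambda,R}dx\llcorner\tilde Q_R$ at cost $\lesssim D_{bdy}(R)$. The collapse $\tilde T$ pushes $\kappa_{\lambda,R}dx\llcorner(\Omega_{0,R}\setminus\hat Q_R)$ onto $\hat g_{0,R}$ at cost $\lesssim D_{bdy}(R)^{\frac 3 2}$. This bounds the third term squared by $\lesssim D_{bdy}(R)$.

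Squaring and using $(a+b+c)^2\le 3(a^2+b^2+c^2)$ gives the transport part of $D(R)$ bounded by $C$ times the right-hand side plus $O(D_{bdy}(R))$. For the mass term I would use the identity $|\hat\kappa_{\lambda,R}-\kappa_{\lambda,R}|^2\lesssim D_{bdy}(R)$ from the proof of Lemma \ref{L:1}, which reduces $(\kappa_{\lambda,R}-1)^2$ to $(\hat\kappa_{\lambda,R}-1)^2$ plus $O(D_{bdy}(R))$. The main obstacle is this mass term: it does not appear explicitly on the right-hand side of the corollary. I would first try to recover $(\hat\kappa_{\lambda,R}-1)^2$ from the middle $W_2^2$ term, comparing the densities near the base point $0$, where $\lambda(0)=1$ by the normalisation of Section \ref{SS:bdy-normalization}. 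Failing that, I would record it explicitly as part of the $O(D_{bdy}(R))$-type error tolerated in the remark after Lemma \ref{L:1}.
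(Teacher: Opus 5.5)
Your treatment of the transport part is the argument the paper intends. The corollary is only said to be contained in the proof of Lemma \ref{L:1}. Running that triangle inequality backwards, with the same stretch on $\hat Q_R$ and the same collapse $\tilde T$, gives at $R\sim 1$
$W_2^2(\lambda\llcorner Q_R,\kappa_{\lambda,R}dx\llcorner\Omega_{0,R})\le 3W_2^2(\lambda\llcorner\hat Q_R+\hat f_R,\hat\kappa_{\lambda,R}dx\llcorner\tilde Q_R+\hat g_{0,R})+C\,D_{bdy}(R)$, and likewise for $\mu$.

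Two remarks on this part:
\begin{enumerate}
\item Your first coupling lands on $\lambda\llcorner\hat Q_R+\hat f_R$, not on $\lambda\llcorner\tilde Q_R+\hat f_R$. The latter counts the mass of $\Omega_{0,R}\setminus\hat Q_R$ twice and cannot be coupled to $\hat\kappa_{\lambda,R}dx\llcorner\tilde Q_R+\hat g_{0,R}$. So state explicitly that you read the first measure with $\hat Q_R$, as Lemma \ref{lem:changeOfData} does.
\item The additive $C\,D_{bdy}(R)$ is forced, not merely a convenient reading, which settles your worry about it. Let $\lambda$ have density $\hat\kappa_{\lambda,R}$ on $\hat Q_R$, with its remaining mass in the boundary layer. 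Then the right-hand side is $O(D_{bdy}^{3/2})$. But the bulk deficit $\kappa_{\lambda,R}-\hat\kappa_{\lambda,R}\sim D_{bdy}^{1/2}$ makes the left-hand side of order $D_{bdy}$. This matches the remark after Lemma \ref{L:1}.
\end{enumerate}

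The one genuine problem is the mass term, and your first remedy cannot work. Take $g_0=g_1\equiv 0$ and $\lambda=\mu=\kappa\,dx$ on $Q_R\cap\{x_1>0\}$ with a constant $\kappa\neq 1$. Then $D_{bdy}(R)=0$, $\tilde Q_R=\hat Q_R$, $\hat f_R=\hat g_R=\hat g_{0,R}=\hat g_{1,R}=0$ and $\hat\kappa_{\lambda,R}=\hat\kappa_{\mu,R}=\kappa$. Both Wasserstein terms on the right vanish, while $D(R)=2(\kappa-1)^2>0$.

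So the statement as printed is false, and no comparison of densities can extract $(\hat\kappa_{\lambda,R}-1)^2$ from the middle $W_2$ term. The normalisation $\lambda(0)=1$ is not a hypothesis of the corollary. Imposing it would only change $\lambda$ near $0$, which costs arbitrarily little in $W_2$ and leaves $(\kappa_{\lambda,R}-1)^2\approx(\kappa-1)^2$.

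Your fallback is mislabelled too: $(\kappa_{\lambda,R}-1)^2$ is not an $O(D_{bdy}(R))$ error. What your argument actually proves, and what is true, is that $D(R)$ is at most $C$ times the right-hand side plus $C\big((\kappa_{\lambda,R}-1)^2+(\kappa_{\mu,R}-1)^2+D_{bdy}(R)\big)$. Equivalently, it bounds the Wasserstein part of $D(R)$ alone. That is exactly how the corollary is used in Lemma \ref{lem:changeOfData}: there $(\kappa_{\lambda,R}-1)^2$ and $\sup_{Q_R}|g|^2$ are bounded directly by the H\"older seminorms, and the corollary is invoked only for the Wasserstein term.
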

\medskip

\subsection{Proof of Proposition \ref{P:1}}
We are now ready to present the proof of Proposition \ref{P:1}. 
\begin{proof}[Proof of Proposition \ref{P:1}]
The proof is divided into a number of steps.

\step 1 The main estimate. Fix $R\in (2,3)$ so that \eqref{eq:goodWasserstein}, \eqref{eq:goodBoundaryData}, \eqref{eq:goodData} and \eqref{eq:goodData2} from Section \ref{sec:goodRadius} hold. Introduce $\phi$ with $\int_{\tilde Q_R}\phi = 0$ satisfying
\begin{align*}
	-\Delta\phi =\hat\kappa_{\mu,R}-\hat\kappa_{\lambda,R}\mbox{ in }\tilde Q_R,\quad \nabla\phi\cdot\nu = \bar g_R+\hat g_{1,R}-\bar f_R-\hat g_{0,R}\mbox{ on }\p\tilde Q_R.
\end{align*}
as well as $\phi^r$ with $\int_{\tilde Q_R} \phi^r=0$ solving
\begin{align}\label{c12}
-\Delta\phi^r = \hat\kappa_{\mu,R}-\hat\kappa_{\lambda,R}\mbox{ in }\tilde Q_R,\quad \nabla\phi^r\cdot\nu = \bar g_R^r+\hat g_{1,R}-\bar f^r_R-\hat g_{0,R}\mbox{ on }\p\tilde Q_R.
\end{align}
where $\bar f^r, \bar g^r $ are mollification of $f,g$ at scale $r$ such that $\bar f^r,\bar g^r$ are smooth on each of the faces of $\p \tilde Q_R$, c.f. \eqref{eq:mollification} . Note in particular, that for $r\leq 1$, $\int_F f^r=\int_F f$, so that \eqref{c12} is well-posed. We record that combining the standard energy estimate, Lemma \ref{lem:regularity} and \eqref{eq:goodBoundaryData},
\begin{align}
\int_{\tilde Q_R} |\nabla \phi^r|^2\lesssim E(\pi,4)+D(4)+D_{bdy}(4T),\label{energy}\\
	\sup_{\tilde Q_R}|\nabla\phi^r|^2\lesssim\frac{1}{r^{d-1}}(E(\pi,4)+D(4)+D_{bdy}(4T)),\label{c15}\\
[\nabla\phi^r]_{\alpha,\tilde Q_R}^2\lesssim \frac{1}{r^{d-1+2\alpha}}(E(\pi,4)+D(4)+D_{bdy}(4T))\label{c16}.
\end{align}

Let $\tau>0$ be given and consider $\epsilon_1>0$ to be fixed later. 
Noting that $f_R,g_R$ are supported on $\p\tilde Q_R$ by \eqref{c2}, following the proof of \cite[Corollary 1]{KochOtto} line by line, we find that
\begin{align}\label{c3}
	\int_{\#_R}\int_{\sigma}^{\tau}|\dot X(t)-\nabla\phi^r(X(t))|^2dtd\pi&\leq \int_{\#_R}|y-x|^2d\pi-\int_{\tilde Q_R}|\nabla\phi^r|^2\\\label{c4}
	&+\int_{\#_R}\int_{\sigma}^{\tau}|\nabla\phi^r(X(t))|^2dtd\pi-\int_{\tilde Q_R}|\nabla\phi^r|^2\\\label{c5}
	+2\int_{\tilde Q_R}\phi^r(-\Delta\phi^r-d(\mu-\lambda))&+2\int_{\p \tilde Q_R}\phi(\nabla\phi^r\cdot\nu-d(g_R-f_R)).
\end{align}
We estimate each term on the right-hand side in turn.

\step 2 The first term. 
We first
estimate $W_2^2(\bar\lambda,\bar\mu)$ where $\bar \lambda,\bar\mu$ are marginals of $\pi\llcorner\#_R$. We observe that we can write
\begin{align*}
\bar\lambda=\lambda\llcorner Q_R+f'\quad\text{ where }\quad\int\zeta df':=\int_{Q_R^c\times Q_R}\zeta(x)d\pi,\\
\bar\mu =\mu\llcorner Q_R+g'\quad\text{ where }\quad\int\zeta dg':=\int_{Q_R\times Q_R^c}\zeta(y)d\pi.
\end{align*}
Thus, using 
\begin{align*}
\int_{Q_R^c\times Q_R}\zeta(X(\sigma),x)d\pi
\end{align*}
as a competitor, we obtain that after decreasing $\e_1$ if necessary,
\begin{align*}
W_2^2(f_R,f')\leq&\int_{Q_R^c\times Q_R}|X(\sigma)-x|^2d\pi\leq\int_{\p(\#_R)}|y-x|d\pi\\
\overset{\eqref{eq:goodCrossing}}{\leq}&\tau E(\pi,4)+C_\tau(D(4)+D_{bdy}(4T)),
\end{align*}
and similarly for $W_2^2(g,g')$. Consequently, by assumption (C), we obtain that
\begin{align*}
&\int_{\#_R}|x-y|^2d\pi\leq W_2^2(\lambda\llcorner Q_R+f',\mu\llcorner Q_R+g')+\Lambda R^{d+2}\\
&\leq W_2^2(\lambda\llcorner Q_R+f_R,\mu\llcorner Q_R+g_R)+\Lambda R^{d+2}+\tau E(\pi,4)+C_\tau(D(4)+D_{bdy}(4T))
\end{align*}
Hence we focus on the first RHS term. By triangle inequality, \eqref{eq:goodWasserstein} and \eqref{eq:goodData},
\begin{align*}
W_2(\lambda\llcorner Q_R+f_R&,\nu\llcorner Q_R+g_R)\leq W_2(\hat\kappa_{\lambda,R}dx\llcorner \tilde Q_R+\hat g_{0,R}+\bar f_R,+\hat\kappa_{\mu,R}dx\llcorner \tilde Q_R+\hat g_{1,R}+\bar g_R)\\
&+W_2(f_R,\bar f_R)+W_2(g_R,\bar g_R)+ W_2(\lambda\llcorner Q_R,\hat\kappa_{\lambda,R}dx\llcorner\tilde Q_R+\hat g_{0,R})\\
&+W_2(\mu\llcorner Q_R,\hat\kappa_{\mu,R}dx\llcorner\tilde Q_R+\hat g_{1,R})+\\
&\leq W_2(\hat\kappa_{\lambda,R}dx\llcorner \tilde Q_R+\hat g_{0,R}+\bar f_R,\hat\kappa_{\mu,R}dx\llcorner \tilde Q_R+\hat g_{1,R}+\bar g_R)\\
&+\tau E(\pi,4)^{\frac{1}{2}} +C(D(4)^{\frac{1}{2}}+D_{bdy}(4T)^\frac 1 2)
\end{align*}
Now the benefit of $\hat\kappa_{\lambda,R}dx\llcorner \hat Q_R$ and $\hat\kappa_{\mu,R}dx\llcorner\tilde Q_R$ in the marginals of the first right hand side term is that they are uniformly positive on their support. This allows us to use Benamou-Brenier formula to construct a competitor for  the first RHS term above, cf.~\cite[Lemma 3]{KochOtto}, in order to obtain 
\begin{align*}
	W_2^2(\hat\kappa_{\lambda,R}\llcorner\tilde Q_R+\bar f_R+\hat g_0,\hat \kappa_{\mu,R}dx\llcorner\tilde Q_R+\bar g_R+\hat g_1)\leq \frac{1}{\min\{\hat\kappa_{\lambda,R},\hat\kappa_{\mu,R}\}}\int_{\tilde Q_R}|\nabla\phi|^2
\end{align*}

\medskip

Collecting the estimates, and using that $a^2-b^2 \leq 2a(a-b)$, we deduce
\begin{align}
&\int_{\#_R}|y-x|^2d\pi-\int_{\tilde Q_R}|\nabla\phi^r|^2\\
\lesssim& \left(\tau^\frac 1 2 E(\pi,4)^\frac 1 2+D(4)^\frac 1 2+D_{bdy}(4T)^\frac 1 2+\Lambda^\frac 1 2\left(\frac 1 {\min(\hat \kappa_{\lambda,R},\hat \kappa_{\mu,R})}\int_{\tilde Q_R} |\nabla \phi|^2\right)^\frac  1 2\right)\\
&\times\left(\tau^\frac 1 2 E(\pi,4)^\frac 1 2+D(4)^\frac 1 2+D_{bdy}(4T)^\frac 1 2+\Lambda^\frac 1 2+\left(\frac 1 {\min(\hat \kappa_{\lambda,R},\hat \kappa_{\mu,R})}\int_{\tilde Q_R} |\nabla \phi|^2\right)^\frac  1 2-\left(\int_{\tilde Q_R}|\nabla \phi^r|^2\right)^\frac 1 2\right)
\end{align}
Recall the energy estimate \eqref{energy}. Standard mollification estimates give
\begin{align}
\int_{\tilde Q_R}|\nabla\phi^r-\nabla\phi|^2\lesssim r(E(\pi,4)+D(4T)),\label{c14}
\end{align}
In combination with elementary estimates, this allows us to deduce
\begin{align*}
	&\int_{\#_R}|y-x|^2d\pi-\int_{\tilde Q_R}|\nabla\phi^r|^2\\
    \lesssim& \tau E(\pi,4)+D(4)+D_{bdy}(4)+\Lambda\\
    &+C(\tau) \left(1-\frac 1 {\min(\hat \kappa_{\lambda,R},\hat \kappa_{\mu,R})}\right)\int_{\tilde Q_R} |\nabla \phi|^2+C(\tau)\int_{\tilde Q_R} |\nabla \phi-\nabla \phi^r|^2\\
    \stackrel{\eqref{eq:goodData}}\lesssim& (\tau+r)E(\pi,4)+C_\tau\left(D(4)+D_{bdy}(4T)+\Lambda\right).
\end{align*}

\step 3 The second term.
We write \eqref{c4} as follows
\begin{align*}
\int_{\#_R}&\int_{\sigma}^{\tau}|\nabla\phi^r(X(t))|^2dtd\pi-\int_{\tilde Q_R}|\nabla\phi^r|^2\\
&=\int_{\#_R}\int_{0}^{1}I(X(t)\in\tilde Q_R)|\nabla\phi^r(X(t))|^2dtd\pi-\int_{\tilde Q_R}|\nabla\phi^r|^2dx=I
\end{align*}
Then observe that
\begin{align*}
	I(X(t)\in\tilde Q_R)\leq I\left(X(t)\in\tilde Q_R,\:\exists s\in[0,1]\: X(s)\notin\p(\tilde Q_R)\right)+I\left(X(t)\in\tilde Q_R,\forall t\in[0,1]\right)
\end{align*}
so that we may estimate
\begin{align*}
I\lesssim& \sup_{\tilde Q_R}|\nabla\phi^r|^2\pi(\p(\#_R))+[|\nabla\phi^r|^2]_{\alpha,\tilde Q_R}\int_{\#_R}|y-x|^\alpha d\pi\\
\stackrel{\eqref{c15},\eqref{c16}}\lesssim& \left(\frac 1 {r^{d-1}}(\pi(\p(\#_R))+\frac 1 {r^{d-1+\alpha}} \left(\int_{\#_R} |y-x|^2 d\pi\right)^\frac \alpha 2\right) (E(\pi,4)+D(4)+D_{bdy}(4)) \\
\stackrel{\eqref{eq:goodCrossingMeasure}}\lesssim& \left(\frac{\tau}{r^{d-1}}+\frac{E(\pi,4)^\alpha}{r^{d-1+\alpha}}\right) E(\pi,4)+D(4)+D_{bdy}(4).
\end{align*}

\step 4 The third term.
By \eqref{c12} and \eqref{c13}, we need to estimate
\begin{align*}
&\int_{\tilde Q_R}\phi^r(-d(\mu-\lambda))+\int_{\p\tilde Q_R}\phi^r(\bar g^r_R-\bar f^r_R+\hat g_{1,R}-\hat g_{0,r}-d(g_R-f_R))\\
&=\left[\int_{\tilde Q_R}\phi^r(\kappa_{\lambda,R}\mathbf{1}_{\Omega_0}-\kappa_{\mu,R}\mathbf{1}_{\Omega_1})+\int_{\p\tilde Q_R}\phi^r(\hat g_{1,R}-\hat g_{0,R})\right]\\
&+\int_{\tilde Q_R}\phi^r(-\kappa_{\lambda,R}\mathbf{1}_{\Omega_0}+\kappa_{\mu,R}\mathbf{1}_{\Omega_1}-d(\mu-\lambda))+\int_{\p\tilde Q_R}\phi^r(\bar g^r_R-\bar f^r_R-d(g_R-f_R))\\
&=:I+II+III.
\end{align*}
Let us first estimate $I$. Using the normalizing condition $\int_{\tilde Q_R}\phi^r=0$ and that $\tilde Q_R=\hat Q_R\cup (Q_R\cap\{-D_{bdy}(4)^{\frac 1 2}< x_1\leq D_{bdy}(4)^{\frac 1 2}\})$, we can rewrite the first term in $I$ as follows,
\begin{align*}
&\int_{\tilde Q_R}\phi^r(\kappa_{\lambda,R}\mathbf{1}_{\Omega_0}-\kappa_{\mu,R}\mathbf{1}_{\Omega_1})=\int_{\hat Q_R}\phi^r(\kappa_{\lambda,R}-\kappa_{\mu,R})+\int_{-D_{bdy}(4)^{\frac 1 2}}^{D_{bdy}(4)^{\frac 1 2}}\int_{Q_R'}\phi^r(\kappa_{\lambda,R}\mathbf{1}_{\Omega_0}-\kappa_{\mu,R}\mathbf{1}_{\Omega_1})\\
=&\int_{-D_{bdy}(4)^{\frac 1 2}}^{D_{bdy}(4)^{\frac 1 2}}\int_{ Q_R'}\phi^r(\kappa_{\lambda,R}-\kappa_{\mu, R})+\int_{-D_{bdy}(4)^{\frac 1 2}}^{D_{bdy}(4)^{\frac 1 2}}\int_{Q_R'}\phi^r(\kappa_{\lambda,R}\mathbf{1}_{\Omega_0}-\kappa_{\mu,R}\mathbf{1}_{\Omega_1})
\end{align*}
where $Q_R':=(-R,R)^{d-1}$ is the $(d-1)$-dimensional cube. Thus we have that $I=I_1+I_2$ where 
\begin{align*}
I_1&:=\int_{-D_{bdy}(4)^{\frac 1 2}}^{D_{bdy}(4)^\frac 1 2}\int_{ Q_R'}\phi^r(\kappa_{\lambda,R}-\kappa_{\mu, R}),\\
I_2&:=\int_{-D_{bdy}(4)^{\frac 1 2}}^{D_{bdy}(4)^{\frac 1 2}}\int_{Q_R'}\phi^r(\kappa_{\lambda,R}\mathbf{1}_{\Omega_0}-\kappa_{\mu,R}\mathbf{1}_{\Omega_1})+\int_{\p\tilde Q_R}\phi^r(\hat g_{1,R}-\hat g_{0,R}).
\end{align*}
We now estimate $I_1$. Note that $\sup_{\tilde Q_R}|\phi^r|\lesssim \sup_{\tilde Q_R}|\nabla\phi^r|$, therefore,
\begin{align*}
|I_1|&\lesssim  D_{bdy}(4)^\frac 1 2|\kappa_{\lambda, R}-\kappa_{\mu, R}|\sup_{\tilde Q_R}|\nabla\phi^r|\\
&\overset{\eqref{eq:goodData2},\eqref{c15}}{\lesssim} D_{bdy}(4) (D(4))^{\frac 1 2}\cdot\frac{1}{r^{\frac{d-1}{2}}}(E(\pi,4)+D(4)+D_{bdy}(4T))^{\frac 1 2}\\
&\lesssim \frac{\tau}{r^{\frac{d-1}{2}}}E(\pi,4)+C_\tau (D(4)+D_{bdy}(4T)).
\end{align*}
To estimate $I_2$,  we recall the Poincar\'e inequality in a thin rectangle: for a function $ f:Q_R\cap\{-\delta \leq x_1\leq \delta\}\to \R $ such that $ f(-\delta,x')\equiv 0 $, it holds that
\begin{align}\label{c17}
	\|f\|_{L^2(Q_R\cap\{-\delta\leq x_1\leq \delta\})}\lesssim \delta \|\nabla f\|_{L^2(Q_R\cap\{-\delta\leq x_1\leq \delta\})}.
\end{align} 
Then we have by definition of $\hat g_{1,R}$ and $\hat g_{0,R}$, that is \eqref{hatg-def},
\begin{align*}
I_2&=\int_{-D_{bdy}^{\frac 1 2}}^{D_{bdy}^{\frac 1 2}}\int_{Q_R'}\phi^r(\kappa_{\lambda,R}\mathbf{1}_{\Omega_0}-\kappa_{\mu,R}\mathbf{1}_{\Omega_1})+\int_{\p\tilde Q_R}\phi^r(\hat g_{1,R}-\hat g_{0,R})\\
&=\int_{-D_{bdy}(4)^{\frac 1 2}}^{D_{bdy}(4)^{\frac 1 2}}\int_{Q_R'}\phi^r(x_1,x')(\kappa_{\lambda,R}\mathbf{1}_{\Omega_0}-\kappa_{\mu,R}\mathbf{1}_{\Omega_1})\\
&+\int_{-D_{bdy}(4)^{\frac 1 2}}^{D_{bdy}(4)^{\frac 1 2}}\int_{Q_R'}\phi^r(-D_{bdy}(4)^{\frac 1 2},x')(\kappa_{\mu,R}\mathbf{1}_{\Omega_1}-\kappa_{\lambda,R}\mathbf{1}_{\Omega_0})
\end{align*}
Using $|\kappa_{\lambda,R}\mathbf{1}_{\Omega_0}-\kappa_{\mu,R}\mathbf{1}_{\Omega_1}|\leq 2 $, this implies
\begin{align*}
|I_2|&\leq 2\int_{-D_{bdy}(4)^{\frac 1 2}}^{D_{bdy}(4)^{\frac 1 2}}\int_{Q_R'}|\phi^r(-D_{bdy}(4)^{\frac 1 2},x')-\phi^r(x_1,x')|dx_1dx'\\
&\lesssim D_{bdy}(4)^{\frac{1}{4}}\left(\int_{-D_{bdy}(4)^{\frac 1 2}}^{D_{bdy}(4)^{\frac 1 2}}\int_{Q_R'}|\phi^r(-D_{bdy}(4)^{\frac 1 2},x')-\phi^r(x_1,x')|^2 \right)^{\frac{1}{2}}\\
&\overset{\eqref{c17},\eqref{energy}}{\lesssim} D_{bdy}(4)^{\frac 3 4}(\int_{\tilde Q_R}|\nabla\phi^r|^2)^{\frac{1}{2}}\\
&\lesssim D_{bdy}(4)^{\frac{3}{4}}(E(\pi,4)+D(4)+D_{bdy}(4T))^{\frac{1}{2}}\leq \tau E(\pi,4)+C_{\tau}(D(4)+D_{bdy}(4T)).
\end{align*}
Combining the estimates for $I_1$ and $I_2$, we obtain that 
\begin{align*}
|I|\lesssim\frac{\tau}{r^{\frac{d-1}{2}}}E(\pi,4)+C_\tau (D(4)+D_{bdy}(4T)).
\end{align*}

The control of $II$ and $III$ follows similarly as in \cite{KochOtto} by exploiting regularity of $\phi^r$. For $II$, we let $\pi$ be optimal for $W_2(\kappa_{\mu,R}dx\llcorner\Omega_{1,R},\mu\llcorner Q_R)$, then 
\begin{align*}
\int_{\tilde Q_R}\phi^r(\kappa_{\mu,R}\mathbf{1}_{\Omega_1}-d\mu)=&\int(\phi^r(x)-\phi^r(y))d\pi\leq \sup_{\tilde Q_R}|\nabla\phi^r|\int|x-y|d\pi\\
\lesssim& \sup_{\tilde Q_R}|\nabla\phi^r|W_2(\kappa_{\mu,R}dx\llcorner\Omega_{1,R},\mu\llcorner Q_R)\\
\overset{\eqref{c15},\eqref{eq:goodData2}}{\lesssim}&\frac{1}{r^{\frac{d-1}{2}}}(E(\pi,4)^{\frac{1}{2}}+D(4)^{\frac{1}{2}}+D_{bdy}(4T)^\frac 1 2)(D(4)^{\frac{1}{2}}+D_{bdy}(4T)^\frac 1 2)
\end{align*}
With similar considerations with $\mu$ replaced by $\lambda$, we obtain
\begin{align*}
II\lesssim& \frac{1}{r^{\frac{d-1}{2}}}(E(\pi,4)^{\frac{1}{2}}+D(4)^{\frac{1}{2}}+D_{bdy}(4T)^\frac 1 2)(D(4)^{\frac{1}{2}}+D_{bdy}(4)^\frac 1 2)\\
\lesssim& \frac{\tau}{r^{\frac{d-1}{2}}}E(\pi,4)+C_\tau(D(4)+D_{bdy}(4T)).
\end{align*}
We next estimate $III$. Letting $\pi$ be optimal for $W_2(\bar g,g)$, we see by similar arguments as for $II$,
\begin{align*}
\int_{\p\tilde Q_R}\phi^r(\bar g^r-dg)&=\int_{\p\tilde Q_R}\phi^r(\bar g^r-\bar g)+\int_{\p\tilde Q_R}\bar g-dg\\
&\lesssim \left(r\int_{\tilde Q_R}|\nabla\phi^r|^2\int_{\p\tilde Q_R}\bar g^2 \right)^{\frac{1}{2}}+\sup_{\tilde Q_R}|\nabla\phi^r|W_2(\bar g,g)(\int_{\p\tilde Q_R}\bar g)^{\frac{1}{2}}\\
&\overset{\eqref{c15},\eqref{eq:goodWasserstein},\eqref{eq:goodBoundaryData}}{\lesssim}\left(r^{\frac{1}{2}}+\frac{\tau}{r^{\frac{d-1}{2}}}\right)(E(\pi,4)+D(4)+D_{bdy}(4T)).
\end{align*}
The same argument shows that the above is also true with $g$ replaced by $f$, hence 
\begin{align*}
III\lesssim \left(r^{\frac{1}{2}}+\frac{\tau}{r^{\frac{d-1}{2}}}\right)(E(\pi,4)+D(4)+D_{bdy}(4T)).
\end{align*}
Combining the estimates for $I,II$ and $III$ above, we obtain the estimate of \eqref{c5},
\begin{align}\label{c20}
\int_{\tilde Q_R}&\phi(-\Delta\phi-d(\mu-\lambda))+\int_{\p \tilde Q_R}\phi(\nabla\phi\cdot\nu-d(g-f))\notag\\
&\lesssim (r^{\frac{1}{2}}+\frac{\tau}{r^{\frac{d-1}{2}}})E(\pi,4)+\frac{C_{\tau}}{r^{\frac{d-1}{2}}}(D(4)+D_{bdy}(4T)).
\end{align}

\step 5 Conclusion.
We combine the estimates \eqref{c3}-\eqref{c5} obtained from previous sections to see that 
\begin{align*}
\int_{\#_R}\int_{\sigma}^{\tau}|y-x-\nabla\phi^r(X(t))|^2dtd\pi&\lesssim \left(\frac{\tau+E(\pi,4)}{r^{d-1+\alpha}}+r^{\frac{1}{2}}\right)E(\pi,4)+\frac{C_\tau}{r^{d-1+\alpha}}(D(4)+D_{bdy}(4T)).
\end{align*}
To finish the proof, we need to estimate the LHS and then choose $\tau$ and $r$. We begin by noting that
\begin{align}\label{c26}
(\tau-\sigma)(x,y)=1\text{ for any }(x,y)\notin \p(\#_R).
\end{align}
Therefore we have that
\begin{align*}
&\int_{\#_1}|y-x-\nabla\phi^r(x)|^2d\pi\\
&\leq\int_{\#_1\backslash \p(\#_R)}|y-x-\nabla\phi^r(x)|^2d\pi+\int_{\p(\#_R)}2|y-x|^2+2|\nabla\phi^r(x)|^2d\pi\\
&\overset{\eqref{c26}}{\lesssim} \int_{\#_1}\int_{\sigma}^{\tau}|y-x-\nabla\phi^r(X(t))|^2dtd\pi+\int_{\#_1}\int_{\sigma}^{\tau}|\nabla\phi^r(X(t))-\nabla\phi^r(X(0))|^2dtd\pi\\
&\quad\quad+\frac{\tau}{r^{d-1}}(E(\pi,4)+D(4)+D_{bdy}(4T))
\end{align*}
where for the last line we also used \eqref{eq:goodCrossing}, \eqref{eq:goodCrossingMeasure} and \eqref{c15}. We next estimate
\begin{align*}
&\int_{\#_1}\int_{\sigma}^{\tau}|\nabla\phi^r(X(t))-\nabla\phi^r(X(0))|^2dtd\pi\lesssim [|\nabla\phi^r|^2]_{\alpha,\tilde Q_{\frac{3}{2}}} \int |y-x|^{2\alpha} d\pi\\
&\overset{\eqref{c16}}{\lesssim}\frac{1}{r^{d-1+\alpha}}\left(E(\pi,4)+D(4)+D_{bdy}(4T)\right)^{1+\alpha}.
\end{align*}
Collecting estimates, we conclude that
\begin{align}\label{eq:finalEstimateApproximation}
\int_{\#_1}&|y-x-\nabla\phi^r(x)|^2\,\mathrm{d}\pi\lesssim(\frac{\tau+E(\pi,4)^\alpha}{r^{d-1+\alpha}}+r^{\frac{1}{2}})E(\pi,4)+\frac{C_{\epsilon_1}}{r^{d-1+\alpha}}(D(4)+D_{bdy}(4T)).
\end{align}

Introduce now
\begin{align*}
-\Delta\tilde\phi^r = \hat\kappa_{\mu,R}-\hat\kappa_{\lambda,R}\mbox{ in }\tilde Q_R,\quad \nabla\tilde\phi^r\cdot\nu = h^r\mbox{ on }\p\tilde Q_R,
\end{align*}
where $h^r=\bar g_R^r-\bar f^r_R+T_r(\hat g_{1,R}-\hat g_{0,R})$. Here for any function $f$ defined on $F=\p\tilde Q_R \cap \{x_1 = - D_{bdy}^\frac 1 2\}$, we define
\begin{align*}
    T_r(f)(x) = f^r(x)-\langle\nabla f^r(0),x\rangle.
\end{align*}
Note that $\nabla' T_r(f)(0)=0$. We note that since $\nabla' \hat g_{1,r}=\nabla' \hat g_{0,R}=0$,
\begin{align*}
\|h^r\|_{L^2(F)}\lesssim& \|\hat g_{1,R}^r-\hat g_{0,R}^r\|_{L^2(F)}+r^\alpha ([\nabla' g_{0,R}]_{\alpha,Q'_R}+[\nabla' g_{1,R}]_{\alpha,Q'_R}).
\end{align*}
By standard properties of mollification and using the definition \eqref{hatg-def} of $\hat g_{0,R},\hat g_{1,R}$ as well as Assumption \ref{ass:tangencyLocal}, we deduce
\begin{align}
\|h^r\|_{L^2(F)}\lesssim& \|\hat g_{1,R}^r-\hat g_{0,R}^r\|_{L^2(F)}+r^\alpha ([\nabla' g_{0,R}]_{\alpha,Q'_R}+[\nabla' g_{1,R}]_{\alpha,Q'_R})\nonumber\\
\lesssim& r \left(\|\hat g_{0,R}\|_{C^{0,1}(\partial \tilde Q_R)}+\|\hat g_{1,R}\|_{C^{0,1}(\partial \tilde Q_R)}\right)+r^\alpha ([\nabla' g_{0,R}]_{\alpha,Q'_R}+[\nabla' g_{1,R}]_{\alpha,Q'_R})\nonumber\\
\lesssim r^\alpha D'_{bdy}(R).\label{eq:difficultFace}
\end{align}

By triangle inequality,
\begin{align*}
&\int_{\#_1} |y-x-\nabla \tilde \phi^r|^2\dd\pi \\
=& \int_{\#_1} |y-x-\nabla \phi^r|^2\dd \pi+\int_{\#_1} \langle \nabla \phi^r-\nabla \tilde \phi^r,2(y-x)-\nabla \phi^r-\nabla \tilde \phi^r\rangle\dd \pi\\
\leq& \int_{\#_1} |y-x-\nabla \phi^r|^2 \dd \pi+\|\nabla \phi^r-\nabla \tilde \phi^r\|_{L^2(\tilde Q_R)}\left(E(\pi,4)+\|\nabla \phi^r\|_{L^2(\tilde Q_R)}^2+\|\nabla \tilde\phi^r\|_{L^2(\tilde Q_R)}^2\right)^\frac 1 2.
\end{align*}

By energy estimate and \eqref{eq:difficultFace},
\begin{align*}
\|\nabla \phi^r-\nabla \tilde \phi^r\|_{L^2(\tilde Q_R)}\lesssim& \|h^r\|_{L^2(\partial \tilde Q_R)}\lesssim r^\alpha D'_{bdy}(R).
\end{align*}

In combination with \eqref{energy}, we have shown
\begin{align*}
\int_{\#_1} |y-x-\nabla \tilde \phi^r|^2\lesssim r D_{bdy}(E(\pi,4)+D(4)+D_{bdy}'(4T))
\end{align*}
and it follows that \eqref{eq:finalEstimateApproximation} holds with $\tilde \phi$ in place of $\phi$. Choosing first $r\ll 1$ sufficiently small and then ensuring $\frac{\tau+E(\pi,4)^\alpha}{r^{d-1+\alpha}}\leq \frac{\tau+\varepsilon_1^\alpha}{r^{d-1+\alpha}}\ll 1$, we conclude the proof.

\step 6 Proof of \eqref{e:phi-C2alpha}. By Lemma \ref{lem:regularity}, in particular \eqref{c53} applied to the equation \eqref{c12}, as well as \eqref{eq:difficultFace}, we have 
\begin{align*}
\|\tilde\phi\|_{C^{3}(\tilde Q_1)}\lesssim \|\bar f^r_R\|^2_{C^2(\p \tilde Q_R)} +\|\bar g^r_R\|^2_{C^2(\p \tilde Q_R)}+\|\hat g_{0,R}^r\|^2_{C^{2}(Q_R')}+\|\hat g_{1,R}^r\|_{C^{2}(Q_R')}^2+D_{bdy}'(R).
\end{align*}
With $r$ being fixed relative to $\tau$ in the last step, standard mollification estimates give 
\begin{align*}
\|\bar f^r_R\|^2_{C^\alpha(\p \tilde Q_R)} +\|\bar g^r_R\|^2_{C^\alpha(\p \tilde Q_R)}\leq C\int_{\p \tilde Q_R}\bar f_R^2+\bar g_R^2\overset{\eqref{eq:goodBoundaryData}}{\leq} C (E(\pi,4)+D(4)+D_{bdy}(4T))
\end{align*}
where $C=C(\tau,d,\alpha)$. On the other hand, similar estimates in combination with the definition \eqref{hatg-def} of $\hat g_{0,R},\hat g_{1,R}$ as well as Assumption \ref{ass:tangencyLocal} give
\begin{align*}
\|\hat g_{0,R}^r\|^2_{C^{2}(Q_R')}+\|\hat g_{1,R}^r\|_{C^{2}(Q_R')}^2\lesssim&\|\hat g_{0,R}\|^2_{C^{1,\alpha}(Q_R')}+\|\hat g_{1,R}\|_{C^{1,\alpha}(Q_R')}^2\\
\lesssim& D_{bdy}(R)+ [\nabla g_0]^2_{\alpha,Q_4'}+ [\nabla g_1]^2_{\alpha,Q_4'}\\
\lesssim& [\nabla g_0]^2_{\alpha,Q_4'}+ [\nabla g_1]^2_{\alpha,Q_4'},
\end{align*}
where the last inequality again follows from Assumption \ref{ass:tangencyLocal}.
Combining the above two estimates finishes the proof of \eqref{e:phi-C2alpha}.
\end{proof}

\section{One-step improvement}
The goal of this section is to show that control of the energy at some scale implies control of the energy at a smaller scale, up to an affine change of coordinates. We derive this fact as a consequence of the harmonic approximation result in Proposition \ref{P:1} in combination with a Taylor expansion argument. Throughout this section, we utilise the notation introduced in Section \ref{sec:harmonixApproximation}. We also recall that
\begin{align}
\mathscr D(R,\lambda,\mu)= R^{2\alpha}\left([\lambda]_{\alpha,R}^2+[\mu]_{\alpha,R}^2\right)+R^{2\alpha}\left([\nabla g_0]^2_{\alpha, Q_R'}+[\nabla g_1]^2_{\alpha, Q_R'}\right)
\end{align}
which encodes the flatness of the marginals $\lambda,\mu$ and of the boundary $g_0,g_1$ in H\"older norms.
We begin by showing how to move from the Wasserstein data term in Section \ref{sec:harmonixApproximation} to the H\"older-based control above.

\begin{lemma}\label{lem:changeOfData}
Assume $\lambda$ is a measure with $C^{\alpha}$ density $f$ supported on a $C^{1,\alpha}$ domain $\Omega$ with $0\in\Omega$ such that $f(0)=1$. There is $R_0\geq 0$ such that if for $R\leq R_0$,
\begin{align}\label{eq:defgForData}
\Omega \cap Q_{2R} = \{(x_1,x')\in Q_{2R}\mid  x_1>g(x'),\,x'\in Q_{2R}'\},\quad g(0)=0,\, \nabla g(0)=1,
\end{align}
then
\begin{align}
&\frac 1 {R^{d+2}} W_2^2(\lambda \llcorner Q_R,\kappa_{\lambda,R}dx \llcorner \Omega \cap Q_R)+(\kappa_{\lambda,R}-1)^2+(\sup_{Q_R} |g|)^2\\
\lesssim& R^{2\alpha}\left([\lambda]_{\alpha,R}^2+ [\nabla g]^2_{\alpha,Q_R}\right),
\end{align}
\end{lemma}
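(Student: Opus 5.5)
The estimate splits into three pieces: the boundary term $(\sup_{Q_R}|g|)^2$, the density term $(\kappa_{\lambda,R}-1)^2$, and the Wasserstein term. I read the normalisation in \eqref{eq:defgForData} as $\nabla g(0)=0$, as in Assumption \ref{ass:tangencyLocal}, since otherwise $\sup|g|$ is of order $R$ and the claim fails. With that reading, for $|x'|\le \sqrt{d}R$ Taylor's formula gives
\begin{align*}
|g(x')|=\Big|\int_0^1 \langle \nabla g(sx')-\nabla g(0),x'\rangle\, ds\Big|\lesssim [\nabla g]_{\alpha,Q_R}R^{1+\alpha}.
\end{align*}
Hence $(\sup_{Q_R}|g|)^2\lesssim R^{2+2\alpha}[\nabla g]_{\alpha}^2\le R^{2\alpha}[\nabla g]_\alpha^2$ once $R\le R_0\le 1$. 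For the density, $\kappa_{\lambda,R}$ is the average of $f$ over $\Omega\cap Q_R$. Since $f(0)=1$, we have $|f-1|\le [\lambda]_{\alpha,R}(\sqrt d R)^\alpha$ on $Q_R$, and therefore $(\kappa_{\lambda,R}-1)^2\lesssim R^{2\alpha}[\lambda]_{\alpha,R}^2$.

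For the Wasserstein term I rescale $x=R\hat x$. The rescaled domain $\hat\Omega=\{\hat x_1>\hat g(\hat x')\}$ with $\hat g(\hat x')=R^{-1}g(R\hat x')$ is a Lipschitz graph over $Q_1'$, and its Lipschitz constant $\sup|\nabla g|\lesssim R^\alpha[\nabla g]_\alpha$ is small for $R\le R_0$. The target is to show $W_2^2(\hat f\,dx\llcorner U,\kappa\,dx\llcorner U)\lesssim \|\hat f-\kappa\|_{L^\infty(U)}^2$ on $U=\hat\Omega\cap Q_1$, with a constant uniform over such nearly flat domains. Granting this, undoing the scaling gives
\begin{align*}
R^{-(d+2)}W_2^2\lesssim \sup_{Q_R}|f-\kappa_{\lambda,R}|^2\lesssim R^{2\alpha}[\lambda]_{\alpha,R}^2.
\end{align*}

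To prove the target bound I use the standard negative Sobolev estimate $W_2^2(\rho_0,\rho_1)\lesssim \|\rho_0-\rho_1\|_{\dot H^{-1}(U)}^2$. This holds when both densities are bounded below by a positive constant on a connected Lipschitz domain $U$; here $\hat f\ge c^{-1}$ by \eqref{eq:densityBound} and $\kappa\ge c^{-1}$. To get the $\dot H^{-1}$ bound, solve the Neumann problem $-\Delta u=\hat f-\kappa$ in $U$ with $\partial_\nu u=0$; it is solvable because $\int_U(\hat f-\kappa)=0$. Then
\begin{align*}
\|\nabla u\|_{L^2(U)}\lesssim C_P(U)\,\|\hat f-\kappa\|_{L^2(U)},
\end{align*}
where $C_P(U)$ is the Poincaré constant of $U$. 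Equivalently, the Benamou--Brenier competitor with velocity field $\nabla u/\rho_t$ along the linear interpolation $\rho_t$ of the two densities gives the same bound directly; this mirrors \cite[Lemma 3]{KochOtto}, as used in Step 2 of the proof of Proposition \ref{P:1}.

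I expect the main obstacle to be uniformity of the Poincaré constant. The domain $U$ is bi-Lipschitz equivalent to $(0,1)\times Q_1'$ through the map $(\hat x_1,\hat x')\mapsto(\hat x_1-\hat g(\hat x'),\hat x')$, up to trimming near the top face; this is the source of the requirement $R\le R_0$. Because $\operatorname{Lip}(\hat g)\ll 1$, this map has Jacobian close to the identity, so the Poincaré constant, and hence the $W_2$--$\dot H^{-1}$ comparison constant, is uniform. A cleaner alternative I would also consider is to push both densities forward through this flattening map. This turns the problem into one on a fixed box with densities that are still bounded below. It costs an additional $W_2^2$ error of order $\sup|g|^2$, which is already controlled by the first step.
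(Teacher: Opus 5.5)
Your proposal is correct, and reading $\nabla g(0)=0$ is right: the paper itself uses this at the end of its proof. Your boundary and density estimates are the same as the paper's. The difference lies in how you handle the Wasserstein term.

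\emph{The paper's route.} It first invokes Corollary~\ref{cor:changingCubes}. This replaces both measures by flattened configurations: the box $\hat Q_R$, plus the boundary layer below $x_1=D_{bdy}^{1/2}$ collapsed onto a face as the measures $\hat f_R,\hat g_{0,R}$. It then applies \cite[Lemma 3]{KochOtto} on that box with an \emph{inhomogeneous} Neumann condition $\nabla\phi\cdot\nu=\hat f_R-\hat g_R$. The energy estimate therefore produces a term $R^{d+2}\sup_{Q_R}|g|^2$ in addition to $R^{d+2+2\alpha}[\lambda]^2_{\alpha,R}$.

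\emph{Your route.} You keep both measures on the same curved domain $U$ and solve a \emph{homogeneous} Neumann problem there. The only source is then $\hat f-\kappa$, and the geometry enters only through a uniform Poincar\'e constant. You obtain $R^{-(d+2)}W_2^2\lesssim R^{2\alpha}[\lambda]_{\alpha,R}^2$ with no dependence on $g$, and no flattening step is needed.

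\emph{Trade-off.} Your argument is self-contained and slightly sharper. The paper's route reuses the box-plus-boundary-measure machinery that is genuinely needed in Proposition~\ref{P:1}, where $\Omega_0\neq\Omega_1$ and no common domain is available. Your ``cleaner alternative'' of pushing forward through the flattening map is closer in spirit to the paper's approach.

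Two details you should make explicit:
\begin{enumerate}
\item The Benamou--Brenier competitor is admissible even though $U$ is not convex. This is because the zero extension of $(\rho_t,\nabla u)$ solves the continuity equation distributionally on $\R^d$, which is exactly the weak form of $\partial_\nu u=0$.
\item Instead of ``trimming'', use the map $(\hat x_1,\hat x')\mapsto\bigl((\hat x_1-\hat g(\hat x'))/(1-\hat g(\hat x')),\hat x'\bigr)$. It sends $U$ bi-Lipschitz onto $(0,1)\times Q_1'$ with derivative close to the identity, since $\sup|\hat g|+\operatorname{Lip}(\hat g)\lesssim R^\alpha[\nabla g]_\alpha$. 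This gives the uniform Poincar\'e constant directly.
\end{enumerate}
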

\begin{proof}
We note that \eqref{eq:defgForData} implies that
\begin{align}\label{eq:boundaryDatumEstimate}
\sup_{Q_R} |g|\lesssim R^{1+\alpha}[\nabla g_0]_{Q_R\cap \supp\,\lambda, \alpha}
\end{align}
We next note that for $x\in \Omega \cap Q_R$,
\begin{align}\label{lambdabound}
&|f(x)-1| = |f(x)-f(0)|\lesssim [\lambda]_{\alpha, R}R^\alpha.
\end{align}
Consequently, there is $C>0$ such that
\begin{align}\label{kappabound}
1-C[f]_{\alpha, R}R^\alpha\leq\kappa_{\lambda, R} = \frac{\int_{Q_R\cap \Omega_0}\lambda}{|Q_R\cap \Omega_0|}\leq 1+C[f]_{\alpha, R}R^\alpha.
\end{align}
Further, reducing $R_0$ if necessary, we may ensure that in $\Omega \cap Q_R$, $f(x)\in [1/2,2]$. 

It remains to bound the Wasserstein term. In light of Corollary \ref{cor:changingCubes}, it suffices to estimate
\begin{align*}
W_2^2(\lambda \llcorner \hat Q_R+\hat f_R, \hat\kappa_{\lambda,R} d x \llcorner \hat Q_R+\hat g_{0,R}).
\end{align*}
Using \cite[Lemma 3]{KochOtto}, we find that
\begin{align}\nonumber
W_2^2(\lambda \llcorner Q_R, \hat\kappa_{\lambda,R} d x \llcorner \tilde Q_R+\hat g_0)\leq&\frac 1 {\min(\lambda \llcorner \hat Q_R, \kappa_{\lambda,R}\llcorner\hat{Q}_R}\int_{\hat Q_R} |\nabla \phi|^2
\lesssim \int_{\hat Q_R} |\nabla \phi|^2,
\end{align}
where with $\hat g_{R}$ is defined from $g$ as in \eqref{hatg-def},
\begin{align*}
-\Delta\phi=\kappa_{\lambda, R} -\lambda\quad& \text{in}\;\hat{Q}_R\\
\nabla\phi\cdot \nu = \hat{f}_R-\hat g_{R}\quad& \text{on}\; \partial \hat{Q}_R\cap \{x_1=D_{bdy}\}.
\end{align*}
It is straightforward to see from the definition of $\hat f_R$ and $\hat g_{0,R}$ that
\begin{align*}
|\hat f_R|+|\hat g_R|\lesssim \sup_{Q_R} |g|
\end{align*}
Consequently, energy estimates and \eqref{lambdabound} give
\begin{align}
\int_{\hat Q_R} |\nabla \phi|^2\lesssim R\int_{\partial \hat Q_R \cap \{x_1=D_{bdy}\}} |\hat f_R|^2+|\hat g_{R}|^2+R^2 \|\kappa_{\lambda,R}-\lambda\|_{L^2(\hat Q_R)}\lesssim R^{d+2} \sup_{Q_R}|g|^2+R^{d+2+2\alpha} [\lambda]_{\alpha,R}^2
\end{align}

Finally, we note that as $g(0)=0$, $\nabla g(0)=0$,  $\sup_{Q_R}|g|^2\lesssim R^{2\alpha}[\nabla g]_{\alpha,Q_R}^2$.
Collecting estimates, this completes the proof.
\end{proof}

We are now ready to prove the following one-step improvement.
\begin{proposition}\label{one step improvement}
Assume $\lambda,\mu\in \mathscr M(\R^d)$ admit $C^{0,\alpha}$-densities satisfying $\lambda(0)=\mu(0)=1$. Suppose $\pi\in \Pi(\lambda,\mu)$ satisfies assumptions (A),(B1) and (C). For any $\beta \in (0, 1)$ there exists $\e_1>0$ such that if $E(\pi,R)+\mathscr D(R,\lambda,R)\leq \e_1$, then there exist constants $\theta \in (0, 1]$ and $C_\beta \ge 1$ depending only on $d, \alpha, \beta, r$, symmetric $B \in \mathbb{R}^{d \times d}$, $b \in \mathbb{R}^d$ and $\gamma>0$ satisfying
\begin{equation}\nonumber
|B - Id|^2 + \frac{|b|^2}{R^2} \lesssim E(\pi,R) + \mathscr D(R,\lambda,\mu),
\end{equation}
such that with $s=(A,b,\gamma,1)$ and $\pi_s$ defined as in Section \ref{sec:changeOfCoordinates},
\begin{align*}
E(\pi_s,\theta R)\leq \theta^{2\beta} E(\pi,R)+C_\beta \left(\mathscr D(R,\lambda,\mu)+ \Lambda\right)\\
\mathscr D(\theta R,\lambda_s,\mu_s)\leq \theta^{2\alpha}\left(1+C\left(E(\pi,R)^\frac 1 2+\mathscr D(R,\lambda,\mu)^\frac 1 2\right)\right)\mathscr D(R,\lambda,\mu).
\end{align*}
Moreover, $\lambda_s(0)=\mu_s(0)=1$ and Assumption \ref{ass:tangency} holds for the measures $\lambda_s,\mu_s$ in $Q_{\theta R}(0)$ and
\begin{align}\label{eq:conditiongamma}
|\gamma-1|^2\lesssim E(\pi,R)+\mathscr D(R,\lambda,\mu).
\end{align}
If (B2) holds instead of (B1), all statements hold with $D(R,\lambda,\mu)$ replaced by \begin{align*}
    R^{2\alpha}([\lambda]_{\alpha,R}+[\mu]_{\alpha,R})+R^{2\alpha}([\nabla g_0]_{\alpha,Q_{TR}}+[\nabla g_1]_{\alpha,Q_{TR}}).
\end{align*}
\end{proposition}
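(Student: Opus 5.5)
By scaling, we reduce to $R=4$ and follow the standard one-step improvement of \cite{KochOtto}, adapted to the boundary. First, Lemma \ref{lem:changeOfData} converts the Hölder data into the Wasserstein data: $D(4)+D'_{bdy}(4)\lesssim \mathscr D(4,\lambda,\mu)$, using $\lambda(0)=\mu(0)=1$ and Assumption \ref{ass:tangencyLocal}. For $\e_1$ small, Proposition \ref{P:1} then applies. It gives a radius $R\in(2,3)$ and a solution $\phi$ of the Neumann problem on $\tilde Q_R$ with
\begin{align*}
\int_{\#_1}|y-x-\nabla\phi(x)|^2\,\dd\pi\le \tau E(\pi,4)+C_\tau(\mathscr D+\Lambda),
\qquad
\|\phi\|^2_{C^3(\tilde Q_1)}\lesssim E(\pi,4)+\mathscr D .
\end{align*}

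Next I would define the affine correction from $\phi$ at the origin. Set $b:=\nabla\phi(0)$ and $A:=\nabla^2\phi(0)$, and take $B$ (and the matrix in $s$) to be $(\mathrm{Id}+A)^{1/2}$, or an equivalent symmetric choice. Choose $\gamma$ so that, after the change of coordinates of Section \ref{sec:changeOfCoordinates}, the densities satisfy $\lambda_s(0)=\mu_s(0)=1$. Since $|A|^2+|b|^2\lesssim\|\phi\|^2_{C^3}$, the bounds on $|B-\mathrm{Id}|^2+|b|^2$ and \eqref{eq:conditiongamma} follow; the bound on $\gamma$ uses that $\mathrm{tr}A=\Delta\phi(0)=\hat\kappa_{\lambda,R}-\hat\kappa_{\mu,R}$ is controlled by \eqref{eq:goodData}.

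The Taylor estimate is as follows. On $\#_\theta$ we have $|\nabla\phi(x)-b-Ax|\lesssim \theta^2\|\phi\|_{C^3}$. Using this and the harmonic approximation bound, we get
\begin{align*}
\theta^{-(d+2)}\int_{\#_\theta}|y-x-b-Ax|^2\,\dd\pi\lesssim \theta^{-(d+2)}\big(\tau E+C_\tau(\mathscr D+\Lambda)\big)+\theta^{2}(E+\mathscr D).
\end{align*}
After the change of coordinates, and a slight enlargement of $\theta$ to absorb the distortion of cubes, this becomes a bound on $E(\pi_s,\theta R)$. Choosing $\theta$ with $C\theta^2\le\frac12\theta^{2\beta}$, and then $\tau$ with $C\theta^{-(d+2)}\tau\le\frac12\theta^{2\beta}$, gives the first inequality.

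The step I expect to be the main obstacle is preserving the boundary geometry under $s$. The new domains must again satisfy Assumption \ref{ass:tangency} at $0$: this needs $0\in\partial\Omega_0\cap\partial\Omega_1$ and coinciding normals. The linear part $A$ rotates normals, so it has to preserve $e_1$ as the common normal. Here I would exploit the Neumann structure of Proposition \ref{P:1}, namely $h'(0)=0$ on the bottom face. This forces $\partial_1\partial_{j}\phi(0)=0$ for $j\neq1$, so $A$ maps $e_1$ to a multiple of itself. It also gives $\partial_1\phi(0)$ of order $D_{bdy}^{1/2}$, and I would correct this by a shift of $b$ along $e_1$. If the first claim fails, I would instead compose $s$ with a rotation and translation of the $y$-domain, whose size is controlled by $|A|+|b|$, to realign the normals.

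Finally, the data estimate follows from the scaling of Hölder seminorms. Under the near-identity map, $[\lambda_s]_{\alpha,\theta R}\le (1+C|B-\mathrm{Id}|+C|\gamma-1|)[\lambda]_{\alpha,R}$, and similarly for $[\nabla g_i]_\alpha$ of the transformed graphs. Multiplying by $(\theta R)^{2\alpha}$ gives the factor $\theta^{2\alpha}(1+C(E^{1/2}+\mathscr D^{1/2}))$.
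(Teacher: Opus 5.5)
Your architecture is the paper's: rescaling, Lemma \ref{lem:changeOfData} plus Proposition \ref{P:1}, affine normalisation from $\nabla\phi(0)$ and $\nabla^2\phi(0)$, Taylor expansion, and H\"older scaling of the data. You also correctly identify restoring Assumption \ref{ass:tangency} as the crux. However, your treatment of that step has a genuine gap.

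\emph{Exact normal preservation does not hold.} The condition $\nabla' h(0)=0$ lives on the face $\{x_1=-D_{bdy}^{1/2}\}$. So it gives $\partial_j\partial_1\phi(-D_{bdy}^{1/2},0)=0$ for $j\neq 1$, not at the origin. More importantly, $b$ must be shifted so that $b\in\partial\Omega_1$, i.e.\ $b=(g_1(\bar b'),\bar b')$ with $\bar b'=\nabla'\phi(0)\neq 0$, not merely $b_1=0$. The normal you must then match is $\nu_1(b)$, which is not $-e_1$. Even a matrix fixing the direction $e_1$ therefore leaves the normals of $\supp\,\lambda_s$ and $\supp\,\mu_s$ at $0$ misaligned. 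Tangency requires exact equality, so a further correction is unavoidable.

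\emph{Your fallback cannot provide that correction}, for two reasons.
\begin{enumerate}
\item A rotation of the $y$-domain alone is not admissible. It changes the cost by the non-marginal term $2\langle x,(\mathrm{Id}-R^{-1})y\rangle$, so $\pi_s$ is no longer (quasi-)minimal and (C) is lost at the next step. The admissible linear changes are $(x,y)\mapsto(M^{-T}x,\gamma My)$. Up to a common rotation, which cannot change the angle between the two normals, this is the symmetric form of Section \ref{sec:changeOfCoordinates}.
\item Any linear correction $M$ adds roughly $|M-\mathrm{Id}|^2$ to $E(\pi_s,\theta R)$, with no gain in $\theta$. A correction of size $|A|+|b|\sim (E+\mathscr D)^{1/2}$ thus adds a term of order $E(\pi,R)$, which destroys the contraction $\theta^{2\beta}E(\pi,R)$.
\end{enumerate}

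\emph{What is missing} is an argument that the misalignment is of higher order, followed by a symmetric near-identity fix.
\begin{enumerate}
\item Take $\bar B=e^{-\nabla^2\phi(0)/2}$. This is the inverse of your $(\mathrm{Id}+A)^{1/2}$, given the convention $Q(x,y)=(A^{-1}x,\gamma A(y-b))$.
\item Let $\hat B$ be built from $\nabla^2\phi(-D_{bdy}^{1/2},0)$; this matrix does map $e_1$ to a multiple of $e_1$.
\item Then $|\bar B-\hat B|\lesssim \|\phi\|_{C^3}D_{bdy}^{1/2}$ and $|\nu_1(b)-\nu_1(0)|\lesssim [\nabla g_1]_\alpha|b|^\alpha$. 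Hence the angle between $\bar B\nu_0(0)$ and $\bar B^{-1}\nu_1(b)$ is $\lesssim (E^{1+\alpha}+\mathscr D)^{1/2}$.
\item Choose a symmetric $\tilde B$ with $|\tilde B-\mathrm{Id}|^2\lesssim E^{1+\alpha}+\mathscr D$ that realigns these two directions, and set $B=\tilde B\bar B$.
\item Check that the shift satisfies $|\tilde b|^2\lesssim E^{1+\alpha}+\mathscr D$.
\end{enumerate}

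These corrections, together with $|\gamma-1|^2$, must then be carried through the Taylor estimate. Note that your trace observation bounds $|\gamma-1|^2$ by data alone. That sharper fact, not just $|\gamma-1|^2\lesssim E+\mathscr D$, is what is needed here. In the Taylor estimate the corrections contribute $\theta^{-(d+2)}(E^{1+\alpha}+\mathscr D)$. The part $E^{1+\alpha}=E^{\alpha}\cdot E$ is absorbed only by choosing $\e_1$ small after fixing $\theta$ and $\tau$, and your choice of constants omits this step.
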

\begin{proof}
We only prove the statement with (B2). The proof with (B1) is analogous with the obvious changes arising from the presence of $T$.

By rescaling, it suffices to consider the case $R=4$. Fix $\beta<\alpha$. We apply Proposition \ref{P:1} in combination with Lemma \ref{lem:changeOfData} to obtain $\phi$ satisfying for $\tau>0$ to be determined at a later stage,
\begin{align}\label{eq:harmonicApproximationApplied}
\int_{\#_1} |x-y-\nabla \phi|^2 d\pi\leq \tau E(\pi,4)+ C_\tau\left( \mathscr D(4,\lambda,\mu)+\Lambda\right)\\
\int_{\tilde Q_1} |\nabla \phi|^2 dx \leq C(E(\pi,4)+\mathscr D(4,\lambda,\mu)).
\end{align}
Set $\bar{b}=\nabla \phi(0)$, $\bar{A}=\nabla^2\phi(0)$ and $\bar{B}=e^{\frac{-\bar{A}}{2}}$. Using Lemma \ref{lem:regularity} and recalling the definition of $\phi$ from Proposition \ref{P:1},
\begin{align}\label{estimatebarb}
&\lvert\bar{B}-Id\rvert^2+|\bar b|^2\lesssim |\bar{A}|^2+|\bar b|^2 \lesssim \|h\|_{L^2(\partial\tilde Q_R)}^2 \lesssim E(\pi,4)+\mathscr D(4,\lambda,\mu).
\end{align}

We now modify $\bar b, \bar B$ in order to ensure Assumption \ref{ass:tangency} holds for $\lambda_s,\mu_s$. Write $\bar b= (\bar b_1,\bar b')$, where $\bar b'$ is the $d-1$-dimensional entry. Set $\tilde b = (g_1(b')-\partial_1 \phi(0))$ and $b=\bar b+\tilde b =(g_1(\bar b'),b')$. Using Lemma \ref{lem:regularity}, which in particular implies that the Neumann-boundary condition is assumed in a strong sense at $0$, the definition of $\phi$, \eqref{estimatebarb} and Lemma \ref{lem:changeOfData}, we find
\begin{align}\nonumber
|\tilde b|^2 \lesssim& |g_1(\bar b')|^2+|\partial_1 \phi(0)|^2\\
\lesssim& \left([\nabla g_1]_{\alpha,\tilde Q_4} |\bar b|^{1+\alpha}\right)^2+|\kappa_{\mu,R}(D_{bdy}-g_1(0))-\kappa_{\lambda,R}(D_{bdy}(4)-g_0(0))|\\
\stackrel{\eqref{estimatebarb}}\lesssim& (E(\pi,4)+\mathscr D(4,\lambda,\mu))^{1+\alpha}+|(\kappa_{\mu,R}-\kappa_{\lambda,R})| D_{bdy}(4)\nonumber\\
\lesssim& E(\pi,4)^{1+\alpha}+\mathscr D(4,\lambda,\mu).\label{eq:tildeb}
\end{align}
Consequently,
\begin{align}\label{eq:bEstimate}
|b|^2 \lesssim |\tilde b|^2+|\bar b|^2 \lesssim E(\pi,4)+\mathscr D(4,\lambda,\mu).
\end{align}
We note that this choice of $b$ ensures that $0\in \partial\supp\,\lambda_s,\partial\supp\,\mu_s$. 

We next modify $B$ by constructing a symmetric matrix $\tilde B$ such that $\tilde B^{-\ast}\bar B^{-\ast}\nu_1(b)$ and $\tilde B \bar B \nu_0(0)$ agree and 
\begin{align}\label{eq:tildebEstimate}
|\tilde B-\textup{id}|^2 \lesssim E(\pi,4)^{1+\alpha}+\mathscr D(4,\lambda,\mu).
\end{align}
Setting $B=\bar B+\tilde B$ then ensures that the normals of $\supp\,\lambda_s$ and $\supp\,\mu_s$ agree. As in a small neighbourhood of the identity, the square-root of a symmetric matrix is well-defined and Lipschitz, it suffices to construct such a symmetric matrix $\tilde A$ satisfying $\tilde A \frac{\bar B \nu_0(0)}{|\bar B \nu_0(0)|}=\frac{\bar B^{-\ast}\nu_1(0)}{|\bar B \nu_1(0)|}$ and $|\tilde A-\textup{id}|^2\lesssim E(\pi,4)^{1+\alpha}+\mathscr D(4,\lambda,\mu)$.

We construct $\tilde A$ as follows. We think of $\R^d$ as the span of $v_1 = \frac{\bar B \nu_1(0)}{|\bar B \nu_1(0)|}$ and its orthogonal complement. On the one-dimensional space $\tilde A$ is defined by requiring $\tilde A v_1 = \frac{\bar B^{-\ast}\nu_1(b)}{|\bar B^{-\ast}\nu_1(b)|}$. By the symmetry requirement, this defines $\tilde A$ aside from the $d-1$-dimensional block, where we set $\tilde A$ to be the identity. 

In order to estimate $|\tilde A-\textup{id}|$, note it is sufficient to estimate how close the first column is to the coordinate vector of $v_1$, i.e. it suffices to estimate $|\tilde A v_1-v_1|$.

Set $\hat B = e^{\frac{-\nabla^2\phi(-D_{bdy}^\frac 1 2,0)}{2}}$. Due to $\nabla g_0(x'=0)=\nabla g_1(x'=0)=0$ and the boundary condition, $\hat B^{-\ast}\nu_1(0)$ and $\hat B \nu_0(0)$ are parallel to $-{e_1}=\nu_1(0)$.
Indeed, note that due to Lemma \ref{lem:regularity} the boundary condition is assumed in a $C^2$-sense and we may compute
\begin{align*}
	\nabla'\left(\nabla\phi\cdot \nu(-D_{bdy}^\frac 1 2,0)\right) &= \nabla' h(0)=0,
\end{align*}
giving the claim. This allows us to estimate, recalling that $\nu_1(0)=\nu_0(0)$,
\begin{align*}
|\tilde A-\textup{id}|^2\lesssim&\left|\frac{\bar B^{-\ast}\nu_1(b)}{|\bar B^{-\ast}\nu_1(b)|}-\frac{ \bar B \nu_1(0)}{|\bar B \nu_1(0)|}\right|^2 \\
\leq&\left|\frac{\hat B^{-\ast}\nu_1(b)}{|\hat B^{-\ast}\nu_1(b)|}-\frac{ \hat B \nu_1(0)}{|\hat B \nu_1(0)|}\right|^2 +  \left|\frac{\bar B\nu_1(0)}{|\bar B\nu_1(0)|}-\frac{ \hat B \nu_1(0)}{|\hat B \nu_1(0)|}\right|^2   +\left|\frac{\bar B^{-\ast}\nu_1(b)}{|\bar B^{-\ast}\nu_1(b)|}-\frac{ \hat B^{-\ast} \nu_1(b)}{|\hat B^{-\ast} \nu_1(b)|}\right|^2 \\
=&I + II + III.
\end{align*}

Note that, since $\hat B^{-\ast}\nu_1(0)$ and $\hat B \nu_0(0)$ are parallel to $-{e_1}=\nu_1(0)$,
\begin{align*}
I=&\left|\frac{\hat B^{-\ast}\nu_1(b)}{|\hat B^{-\ast}\nu_1(b)|}-\frac{ \hat B \nu_1(0)}{|\hat B \nu_1(0)|}\right|^2\\
\lesssim & |\nu_1(b)-\nu_1(0)|^2\lesssim ([\nabla' g_1]_{\alpha,\tilde Q_4}|b|^\alpha)^2\stackrel{\eqref{eq:bEstimate}}\lesssim E(\pi,4)^{1+\alpha}+\mathscr D(4,\lambda,\mu).
\end{align*}
$II$ and $III$ are estimated similarly and we focus on $II$. Exactly as for \eqref{estimatebarb}, we find $|\hat B-\textup{id}|^2 \lesssim E(\pi,4)+\mathscr D(4,\lambda,\mu)$. This allows to estimate using Lemma \ref{lem:regularity} and Lemma \ref{lem:changeOfData},
\begin{align*}
II\lesssim& |\hat B-\bar B|^2\lesssim |\nabla^2\phi(0)-\nabla^2 \phi(0,-D_{bdy}^\frac 1 2|^2\\
\lesssim& (E(\pi,4)+\mathscr D(4,\lambda,\mu))D_{bdy}^\frac 1 2\lesssim E(\pi,4)^{1+\alpha}+\mathscr D(4,\lambda,\mu).
\end{align*}

Using triangle inequality, we conclude setting $B=\tilde B\bar B$,
\begin{align}\label{eq:Bestimate}
|B-\textup{id}|^2\lesssim (|B| |\tilde B-\textup{id}|+|\tilde B| |B-\textup{id}|)^2\lesssim E(\pi,4)+\mathscr D(4,\lambda,\mu).
\end{align}

We now consider the change of coordinates $s=(A,b,\gamma,1)$ with $\gamma = \frac{\mu(b)^\frac 1 d}{\textup{det}(B)^\frac 2 d}$ and the associated transport plan $\pi_s$ with marginals $\lambda_s$ and $\mu_s$. Note that due to our work so far, $\lambda_s(0)=\mu_s(0)=1$ and Assumption \ref{ass:tangency} holds for $\lambda_s$ and $\mu_s$. 

We note due to smoothness of $\textup{det}$ near $1$ and by construction of $\tilde B$,
\begin{align*}
|\textup{det}(\tilde B)-1|^2\lesssim |\tilde B-1|^2\lesssim E(\pi,4)^{1+\alpha}+\mathscr D(4,\lambda,\mu).
\end{align*}
By definition of $\bar B$ and $\phi$ and using Lemma \ref{lem:changeOfData},
\begin{align*}
|\textup{det}(\tilde B)-1|^2 = |e^{(\hat \kappa_{\mu,R}-\hat \kappa_{\lambda,R})/2}-1|^2 \lesssim |\hat \kappa_{\lambda,R}-1|^2+|\hat \kappa_{\mu,R}-1|^2 \lesssim \mathscr D(4,\lambda,\mu).    
\end{align*}
We deduce that
\begin{align}\label{eq:gammaEstimate}
|\gamma-1|^2\lesssim& E(\pi,4)^{1+\alpha}+\mathscr D(4,\lambda,\mu).
\end{align}

We now estimate for $\theta \leq \frac 1 8$, noting that we may ensure $Q_s(\#_\theta)\subset \#_{2\theta}$,
\begin{align*}
 E(\pi_s,\theta) =& \theta^{-(d+2)}\int_{Q_s\#_\theta} |\gamma B(y-b)-B^{-\ast}x|^2 |\textup{det}(B)|^{-1} d\pi\\
 \lesssim& \theta^{-(d+2)} \int_{\#_{2\theta}} |(y-b)-B^{-1}B^{-T}x|^2+|\gamma-1|^2 |B^{-T}x|^2 d \pi\\
 \lesssim& \theta^{-(d+2)} \int_{\#_{2\theta}} |y-x-\nabla \phi|^2 + |\nabla \phi-\bar b-\bar A x|^2 + |x+\bar Ax-\bar B^{-2}x|^2\\
 &+ |\bar B^{-2}x-B^{-1}x|^2+|\bar b-b|^2+|\gamma-1|^2 |B^{-T}x|^2\d\pi.
\end{align*}
We now estimate each term in turn. First, using \eqref{eq:harmonicApproximationApplied},
\begin{align*}
\theta^{-(d+2)} \int_{\#_{2\theta}} |y-x-\nabla \phi|^2d\pi\leq \tau \theta^{-(d+2)}E(\pi,4)+C_{\tau,\theta} \mathscr D(4,\lambda,\mu).
\end{align*}
Next, by Taylor expansion and using \eqref{e:phi-C2alpha},
\begin{align*}
\theta^{-(d+2)} \int_{\#_{2\theta}} |\nabla \phi-\bar b-\bar A x|^2d\pi=& \theta^{-(d+2)}\int_{\#_{2\theta}} \int_{\#_{2\theta}} |\nabla^2 \phi(tx)x-\nabla^2 \phi(0)|^2 d\pi\\
\lesssim& \theta^{2} \|\phi\|^2_{C^{3}(\tilde Q_{2\theta})}\lesssim \theta^{2}(E(\pi,4)+\mathscr D(4,\lambda,\mu)).
\end{align*}
By Taylor expansion, recalling the definition of $\bar B, \bar A$ and using Lemma \ref{lem:regularity},
\begin{align*}
    \theta^{-(d+2)} \int_{\#_{2\theta}} |x+\bar Ax-\bar B^{-2}x|^2\lesssim \|\phi\|_{C^{3}(\tilde Q_{2\theta})}^2\lesssim E(\pi,4)^2+\mathscr D(4,\lambda,\mu).
\end{align*}
Next, we note
\begin{align*}
|\bar B^{-2}-B^{-1}|+|\bar b-b|\lesssim |\tilde B-\textup{id}|+|\tilde b|\stackrel{\eqref{eq:tildeb},\eqref{eq:tildebEstimate}}\lesssim E(\pi,4)^{1+\alpha}+\mathscr D(4,\lambda,\mu),
\end{align*}
to find
\begin{align*}
&\theta^{-(d+2)} \int_{\#_{2\theta}} |\bar B^{-2}x-B^{-1}x|^2+|\bar b-b|^2+|\gamma-1|^2 |B^{-T}x|^2\d\pi\\
\lesssim& \theta^{-(d+2)}\left(E(\pi,4)^{1+\alpha}+\mathscr D(4,\lambda,\mu)\right).
\end{align*}
Finally, using \eqref{eq:gammaEstimate},
\begin{equation}\label{gamma-1}
    \theta^{-(d+2)}\int_{{\#}_{2\theta}}\lvert \gamma-1\rvert^2\lvert B^{-T}x\rvert^2\,\mathrm{d}\pi\lesssim \theta^{-(d+2)}\left(E(\pi,4)^{1+\alpha}+\mathscr D(4,\lambda,\mu)\right).
\end{equation}
Collecting estimates, we have shown for some $C>0$,
\begin{align*}
E(\pi_s,\theta)\leq C\left(\tau \theta^{-(d+2)}+\theta^{2}+E(\pi,4)^{\alpha}\right)E(\pi,4)+C_{tau,\theta}\left(\mathscr D(4,\lambda,\mu)+\Lambda\right)
\end{align*}
Fixing first $\tau$ sufficiently small that $C\tau\theta^{-(d+2)}\leq \frac 1 3\theta^{2\beta}$, then $\e_1$ sufficiently small that $CE(\pi,4)^\alpha \leq \frac 1 3\theta^{2\beta}$ and finally $\theta$ sufficiently small that $C\theta^{2}\leq \frac{1} 3\theta^{2\beta}$, completes the proof of the one-step improvement for the energy.

It remains to prove the one-step improvement for the data term. We first note that
\begin{align*}
[\mu_s]_{\alpha,\tilde Q_{\theta}} =& \gamma^{-d}|\gamma^{-1}B^{-1}|^{\alpha} \sup_{x,y\in Q_{\theta}} \frac{|\mu(\gamma^{-1} B^{-1} x+b)-\mu(\gamma^{-1} B^{-1}y+b)|}{|(\gamma^{-1}B^{-1}x+b)-(\gamma^{-1}B^{-1} y+b)|^{\alpha}}\\
\leq& \gamma^{-(d+\alpha)}|B^{-1}|^{\alpha} \sup_{x',y'\in \tilde Q_4} \frac{|\mu(x')-\mu(y')|}{|x'-y'|^\alpha}\\
\stackrel{\eqref{eq:Bestimate}}\leq&\left(1+C\left(E(\pi,4)^\frac 1 2+\mathscr D(4,\lambda,\mu)^\frac 1 2\right)\right)[\lambda]_{\alpha,\tilde Q_4}.
\end{align*}

An analogous (and slightly simpler) argument applies for the term involving $\lambda$. Finally, we note that the functions $g_{0,s},g_{1,s}$ describing the boundary of $\supp\,\lambda_s$, $\supp\,\mu_s$ may be obtained via
\begin{align*}
g_{0,s}=g_0(Bx),\quad g_{1,s}(y)=g_1(\gamma^{-1} B^{-1}y+b).
\end{align*}
Consequently,
\begin{align*}
[\nabla g_{1,s}]_{\alpha,\tilde Q_{\theta}}^2\leq |B|^2 [g_{1}]_{\alpha,\tilde Q_4}\stackrel{\eqref{eq:Bestimate}}\leq \left(1+C\left(E(\pi,4)^\frac 1 2+\mathscr D(4,\lambda,\mu)\right)\right) [g_1]_{\alpha,\tilde Q_4},
\end{align*}
and an analogous estimate holds for $[\nabla g_{0,s}]_{\alpha,\tilde Q_{\theta}}$. Collecting estimates completes the proof.
\end{proof}

We will mainly apply Proposition \ref{one step improvement} in the following form adapted to regularised optimal transport. We remind the reader that we defined the extended energy $E_p$ in \ref{eq:extendedEnergy}.

\begin{corollary}\label{cor:oneStepRegularised}
Let $p\in (1,2]$. Assume $\lambda,\mu$ admit $C^{\alpha}$-densities on $\supp\,\lambda$ and $\supp\,\mu$, respectively. Let $\pi$ a minimiser of $OT_{\e,h_p}(\lambda,\mu)$. There is $R_0>0$ such that if $R\leq R_0$, Assumption \ref{ass:topological} is verified in $Q_R$, Assumption \eqref{ass:tangency} holds for $\supp\,\lambda$ and $\supp\,\mu$ in $Q_R$ and $\lambda(0)=\mu(0)=1$, then the following statement holds:  
For any $\beta \in (0,1)$, there exists $\e_1>0$  such that if 
\begin{align}\label{c85}
E_p(\pi,R)+\mathscr D(R,\lambda,\mu)\leq \e_1 \quad \mbox{and}\quad \frac{\e^{\frac{2}{d(p-1)+2}}}{R}\leq\e_1,
\end{align} and Assumption \eqref{eq:topological} holds in $B_R$, then there exist constants $\theta \in (0, 1]$ and $C_\beta \ge 1$ depending only on $d, \alpha, \beta$, $B \in \mathbb{R}^{d \times d}$ and $b \in \mathbb{R}^d$ satisfying
\begin{equation}\nonumber
|B - Id|^2 + \frac{|b|^2}{R^2} \lesssim E(\pi,R) + \mathscr D(R, \lambda, \mu)+\frac{R_c^2}{R^2},
\end{equation}
such that with $s=(B,b,\mu(b)^\frac 1 d,1)$ and $\pi_s$ defined as in Section \ref{sec:changeOfCoordinates},
\begin{align*}
E_p(\pi_s,\theta R)\leq \theta^{2\beta} E_p(\pi,R)+C_\beta\left( D(R)+\frac{R_c^2}{R^2}\right)\\
\mathscr D(\theta R,\lambda_s,\mu_s)\leq \theta^{2\alpha}\left((1+C\left(E_p(\pi,R)^\frac 1 2+\mathscr D(R,\lambda,\mu)^\frac 1 2\right)\right) \mathscr D(R,\lambda,\mu)\\
|\gamma-1|^2\lesssim E(\pi,R)+\mathscr D(R,\lambda,\mu).
\end{align*}
Moreover, $\lambda_s(0)=\mu_s(0)$ and Assumption \ref{ass:tangency}holds for the measures $\lambda_s,\mu_s$.

If $p=1$, the statement holds assuming that Assumption \ref{ass:tangency} holds in $Q_{TR}$ for some $T\geq 2$, replacing $D(R,\lambda,\mu)$ with
\begin{align*}
    R^{2\alpha}([\lambda]_{\alpha,R}+[\mu]_{\alpha,R})+R^{2\alpha}([\nabla g_0]_{\alpha,Q_{TR}}+[\nabla g_1]_{\alpha,Q_{TR}}).
\end{align*}
and assuming instead of Assumption \ref{ass:topological} that,
\begin{align*}
(TR)^{-(d+2)}\int_{\#_R\cap \{|x-y|\geq (T-1)R\}} |x-y|^2 \dd \pi\leq \frac{\e^2}{T^2 R^2}+D_{bdy}(TR).
\end{align*}
\end{corollary}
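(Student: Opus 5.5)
The plan is to deduce Corollary \ref{cor:oneStepRegularised} from Proposition \ref{one step improvement}. To do so, I verify assumptions (A), (B) and (C) for the minimiser $\pi$ of $OT_{\e,h_p}$ with $\Lambda\lesssim R_c^2/R^2$ (in the normalisation $R^{-(d+2)}\Lambda R^{d+2}$). I then upgrade the resulting energy estimate from $E$ to the extended energy $E_p$.

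Assumption (A) is exactly the hypothesis that Assumption \ref{ass:tangency} holds in $Q_R$, rewritten via Assumption \ref{ass:tangencyLocal}. For $p\in(1,2]$ I verify (B2). The topological condition (Assumption \ref{ass:topological}) in $Q_R$ gives \eqref{eq:topological2} with $b=0$. Corollary \ref{cor:Linfty} then yields
\begin{align*}
\sup_{\#_{R/2}\cap\supp\pi}\frac{|y-x|}{R}\lesssim \max\bigl(E^{\frac12},E^{\frac1{d+2}},R_c/R\bigr)+L_1 .
\end{align*}
Here $L_1\lesssim \mathscr D(R)^{1/2}$ because $\nabla g_i(0)=0$. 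The right-hand side is smaller than any prescribed $\tau$ once $\e_1$ is small, since $R_c/R\le\e_1$ by \eqref{c85}. By Remark \ref{rk:1}, (B1) also holds with $T=1$.

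Assumption (C) follows from Lemma \ref{lem:localQuasiminimality}. Because $E\le E_p$, it gives
\begin{align*}
\int_{\#_R}|x-y|^2\,d\pi\le W_2^2(\bar\lambda,\bar\mu)+\tau R^{d+2}E_p(2R)+C_\tau R_c^2R^d .
\end{align*}
To obtain a genuine $\Lambda$, I run the argument at scale $R/2$ in place of $R$, which is harmless after rescaling. I then absorb $\tau E_p(R)$ into the $\theta^{2\beta}E_p(\pi,R)$ term by choosing $\tau\ll\theta^{2\beta}$. Finally, Lemma \ref{lem:changeOfData} converts $D(R)$ and $D_{bdy}(R)$ into $\mathscr D(R)$. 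With this, Proposition \ref{one step improvement} gives $B,b,\gamma$ with
\begin{align*}
E(\pi_s,\theta R)\le\theta^{2\beta}E(\pi,R)+C_\beta\bigl(\mathscr D(R)+R_c^2/R^2\bigr),
\end{align*}
together with the bounds on $B$, $b$, $\gamma$, the data-term improvement, the normalisation $\lambda_s(0)=\mu_s(0)=1$, and the tangency of $\lambda_s,\mu_s$.

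It remains to control the regularisation part of $E_p(\pi_s,\theta R)$. Section \ref{sec:changeOfCoordinates} shows that $\pi_s$ is optimal for $OT_{\e',h_p}$ with $\e'=\e\gamma^{1/2}$. Since $|\gamma-1|\ll1$, the critical scales satisfy $R_c'\simeq R_c$. The term $\e'^2(\theta R)^{-(d+2)}\int_{\#_{\theta R}}h_p(d\pi_s/dP)\,dP$ is then bounded in two steps. First, the $L^\infty$ bound gives $\#_{\theta R}\subset Q_{2\theta R}\times Q_{2\theta R}$ for $\pi_s$. Second, the density bound $d\pi_s/dP\lesssim R_c^{-d}$ holds; it comes from \eqref{c44}--\eqref{c43} and the construction of $R_c$ in Proposition \ref{prop:LinftyPartial}. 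Together these give that the regularisation part is $\lesssim \e^2R_c^{-d(p-1)}(\theta R)^{-2}\simeq R_c^2/(\theta R)^2$, using $\e^2=R_c^{d(p-1)+2}$. This error is absorbed into $C_\beta R_c^2/R^2$.

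For $p=1$ the same scheme applies with (B1) in place of (B2). The long-trajectory hypothesis and Corollary \ref{eq:crossingEntropic} supply (B1) with the given $T$, and Lemma \ref{lem:localQuasiminimality} supplies (C), including the extra $D_{bdy}$ term. Here $E_1=E$, so no regularisation term has to be handled.

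I expect the main obstacle to be the bookkeeping in (C): the quasi-minimality bound carries $E_p(2R)$ rather than $E_p(R)$, which forces the change of scale and the absorption described above. A second delicate point is the uniform density bound $d\pi_s/dP\lesssim R_c^{-d}$ near the boundary. If that bound is awkward to obtain directly, I would instead bound the regularisation term by $E_p(\pi,R)$ through $\pi_s$'s local minimality against the unregularised competitor.
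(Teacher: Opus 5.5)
Your verification of (A), (B2) and (C), and your use of Proposition \ref{one step improvement} for the transport part $E(\pi_s,\theta R)$, agree with the paper up to bookkeeping. (Note that $E\le E_p$ only holds up to an error $O(\varepsilon^2/R^2)\le O(R_c^2/R^2)$, since $h_p\ge -\frac{1}{p-1}$.) The gap is in the regularisation part of $E_p(\pi_s,\theta R)$.

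\textbf{The density bound.} Your bound $\frac{d\pi_s}{dP}\lesssim R_c^{-d}$ is equivalent to $\xi\lesssim R_c^2$ on $\supp\pi$. It does not follow from \eqref{c44}--\eqref{c43} and Proposition \ref{prop:LinftyPartial}. Those inequalities bound densities at crossed pairs from below, and hence displacements from above; they do not bound $\xi$ from above. What they do give, via $\xi(x,y)\le \xi(x,y')+\xi(x',y)+2\langle x-x',y-y'\rangle$ for $(x',y')\in\supp\pi$ and \eqref{c43} with $x'\in B_r(x)$, is $\xi(x,y)\lesssim \varepsilon^2 r^{-d(p-1)}+r\ell$. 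Here $\ell\sim R\max(E(\pi,R)^{1/(d+2)},R_c/R)$ is the displacement bound available at scale $R$. Optimising in $r$ yields only $\xi\lesssim R_c^{(k+2)/(k+1)}\ell^{k/(k+1)}$ with $k=d(p-1)$. This is $\lesssim R_c^2$ only when $\ell\lesssim R_c$, i.e.\ when $E(\pi,R)^{1/(d+2)}\lesssim R_c/R$. In the regime that matters for the iteration, $\ell\gg R_c$, the resulting error is neither $\lesssim \theta^{2\beta}E_p(\pi,R)$ nor $\lesssim R_c^2/R^2$.

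The sharp bound $\xi\lesssim R_c^2$ (equivalently $|\mathcal S_x|\gtrsim R_c^d$) appears in the paper only in Proposition \ref{lem:smallScaleGeometric}. That proposition needs smallness of the energy at scale $C_0R_c$, which comes from Proposition \ref{prop:largeScale}, which in turn iterates the very corollary you are proving. Invoking it here is circular.

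\textbf{The fallback and the missing idea.} Your fallback does not close the gap. Bounding the regularisation term on $\#_{\theta R}$ by $E_p(\pi,R)$ through local minimality loses a factor $\theta^{-(d+2)}$ and gives no decay. The missing idea is to carry the regularisation term through the harmonic approximation, as the paper does.

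\begin{enumerate}
\item Use Lemma \ref{lem:localQuasiminimality} in its full form in place of (C), keeping $\varepsilon^2\int_{\#_R}h_p(\frac{d\pi}{dP})\,dP$ on the left-hand side.
\item Since (C) enters only Step 2 of the proof of Proposition \ref{P:1}, this term lands on the left-hand side of the main estimate \eqref{c3}. You obtain $\varepsilon^2R^{-(d+2)}\int_{\#_{R/4}}h_p(\frac{d\pi}{dP})\,dP\le \tau E_p(\pi,R)+C(\mathscr D(R,\lambda,\mu)+R_c^2/R^2)$ with $\tau$ arbitrary. The negative part of $h_p$ costs only $O(\varepsilon^2/R^2)$.
\item Transfer this to $\pi_s$ on $\#_{\theta R}\subset Q(\#_{R/4})$, at the cost of a factor $\theta^{-(d+2)}$ and a factor $\gamma$.
\item Choose $\tau$ after $\theta$. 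Absorb $\gamma$ by running the argument with $\tilde\beta=\frac{1+\beta}{2}$ and shrinking $\theta$ using \eqref{eq:conditiongamma}.
\end{enumerate}

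With the regularisation term controlled this way, no pointwise density bound is needed.
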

\begin{proof}
In light of Proposition \ref{one step improvement}, it suffices to verify assumptions (A)-(C) with $\Lambda = R_c^2 R^d$. For a sufficiently small choice of $R_0$, (A) is an immediate consequence of the equivalence between Assumption \ref{ass:tangency} and Assumption \ref{ass:tangencyLocal} up to a rotation and translation of the coordinate system.

For $1< p\leq 2$, note that due to Assumption \ref{ass:topological}, the topological assumption \eqref{eq:topological2} of Corollary \ref{cor:Linfty} is verified. Hence we obtain that 
\begin{align*}
\sup_{(x,y)\in (\Omega_0\cap B_R)\times\R^d\cap \supp\pi}\frac{|y-x|}{R}\lesssim \max\{E(\pi,R)^{\frac{1}{d+2}},\frac{\e^{\frac{2}{d(p-1)+2}}}{R}\}.
\end{align*}
Then the assumption \eqref{c85} shows that  (B2) is verified.

Lemma \ref{lem:localQuasiminimality} gives
\begin{align}
E_p(\pi,R)\leq W_2^2(\bar \lambda,\bar \mu)+\frac{R_c^2}{R^2}+\int_{\#_{2R}} h_p\left(\frac{\dd\pi}{\dd P}\right)\dd P.
\end{align}
Noting that (C) is only employed in Step 2 of the proof, this shows we may still use (the proof of) Proposition \ref{P:1} to obtain
\begin{align*}
\int_{\#_{R/4}} |x-y-\nabla \phi|^2\dd\pi+\e^2\int_{\#_{R/4}}h_p\left(\frac{\dd\pi}{\dd P}\right)\leq \tau E_p(\pi,R)+C\left(\mathscr D(R, \lambda, \mu)+\frac{R_c^2}{R^2}\right).
\end{align*}
In particular, (the proof of) Proposition \ref{one step improvement} applies and gives for $\tilde \beta=\frac{1+\beta} 2\in (0,1)$, $\theta \in (0, 1]$ and $C_\beta \ge 1$ depending only on $d, \alpha, \beta, r$, $B \in \mathbb{R}^{d \times d}$ and $b \in \mathbb{R}^d$ satisfying
\begin{equation}\nonumber
|B - Id|^2 + \frac{|b|^2}{R^2} \lesssim E(\pi,R) + D(R),
\end{equation}
such that with $s=(B,b,\mu(b)^\frac 1 d,1)$ and $\pi_s$ defined as in Section \ref{sec:changeOfCoordinates},
\begin{align}\label{eq:applicationOfGenericOneStep}
E(\pi_s,\theta R)+\frac{\e^2}{(\theta R)^{d+2}}\int_{\#_{R/4}} h_p\left(\frac{\dd \pi}{\dd P}\right)\dd P\leq \theta^{2\tilde\beta} E_p(\pi,R)+C_\beta \left(\mathscr D(R, \lambda, \mu)+\frac{R_c^2}{R^2}\right).
\end{align}
Note that in light of Section \ref{sec:changeOfCoordinates},
\begin{align*}
\frac{\e^2}{(\theta R)^{d+2}}\int_{\#_{R/4}}h_p\left(\frac{\dd\pi}{\dd P}\right)\dd P_s= \frac{\e^2}{(\theta R)^{d+2}\gamma} \int_{Q(\#_{R/4})} h_p\left(\frac{\dd \pi_s}{\dd P_s}\right)\dd P_s,
\end{align*}
where $Q(x,y) = (B^{-1}x,\gamma B(y-b))$. Reducing the choice of $\theta$ if necessary, we may ensure $\#_{\theta R}\subset Q(\#_R)$ and deduce
\begin{align*}
\frac{\e^2}{(\theta R)^{d+2}}\int_{\#_{R/4}}h_p\left(\frac{\dd\pi}{\dd P}\right)\dd P_s\geq \frac{\e^2}{\gamma(\theta R)^{d+2}}\int_{\#_{\theta R}} h_p\left(\frac{\dd \pi_s}{\dd P_s}\right)\dd P_s-C \frac{\e^2 R^d}{(\theta R)^{d+2}}.
\end{align*}
Thus, we have shown
\begin{align}
E_p(\pi_s,\theta R)\leq \gamma\left(\theta^{2\tilde\beta} E_p(\pi,R)+C_\beta \left(\mathscr D(R, \lambda, \mu)+\frac{R_c^2}{R^2}\right)\right).
\end{align}
In light of \eqref{eq:conditiongamma} as $\tilde \beta>\beta$, reducing $\theta$ further if necessary, we may ensure $\gamma \theta^{2\tilde \beta}\leq \theta^{2\beta},$ completing the proof.

For $p=1$, (B1) is satisfied by Lemma \ref{eq:crossingEntropic}. (C) is given by Lemma \ref{lem:localQuasiminimality}. The proof now proceeds with the obvious changes due to the presence of $T$ in Proposition \ref{one step improvement}.
\end{proof}

\section{The ROT-problem- global regularity and sparsity bound}
We now turn towards establishing our main results. We begin by proving a large-scale boundary $\e$-regularity result. We then study the small-scale geometric properties of $\mathcal S_x$ and $\mathcal T_y$. Combining these two results, we will obtain our main result Theorem \ref{thm:main}.

\subsection{Large-scale regularity}\label{sec:large-scaleRegularity}
The goal of this section is to prove a large-scale $\e$-regularity result for regularised optimal transport. We begin by proving a boundary Campanato iteration result. The proof is essentially a direct consequence of the one-step improvement Corollary \ref{cor:oneStepRegularised} and a Campanato iteration as implemented in the interior case in \cite[Section 1.3]{GvalaniKoch}.

\begin{proposition}\label{prop:largeScale}
Fix $p\in (1,2]$. Assume that for some $\alpha\in (0,1)$, $\lambda,\mu$ admit $C^{1,\alpha}$ densities and $\Omega_0=\supp\,\lambda$, $\Omega_1=\supp\,\mu$ are $C^{1,\alpha}$-domains. Suppose $0\in \partial \Omega_0,\partial \Omega_1$, $\nu_0(0)=\nu_1(0)$ and $\lambda(0)=\mu(0)=1$. Let $\pi$ be a solution of $OT_{\e,h_p}(\lambda,\mu)$. Denote $R_c = \e^\frac 2 {d(p-1)+2}$. Then there is $R_0>0$ and $\e_1>0$ such that if $R\leq R_0$, Assumption \ref{ass:topological} holds in $Q_R$ and
\begin{align*}
E_p(\pi,R)+\mathscr D(R,\lambda,\mu)+\frac{R_c^2}{R^2}<\e_1,
\end{align*}
then for any $R_c = \e^\frac 2 {d(p-1)+2} \ll r \leq R$, we may find $b_r\in \R^d$ and $A_r\in \R^{d\times d}$ symmetric such that
\begin{align*}
\frac 1 {r^{d+2}} \int_{(Q_r(0)\times \R^d)\cup (\R^d\times Q_r(b_k))} |y-A_r x-b_r|^2 d\pi\lesssim E_p(\pi,R)+\mathscr D(R,\lambda,\mu)+\frac{R_c^2}{r^2},\\
\frac{|b_r|^2}{R^2}+|A_r-\textup{id}|^2\lesssim  E_p(\pi,R)+\mathscr D(R,\lambda,\mu)+\frac{R_c^2}{r^2}.
\end{align*}

If $p=1$, the statement holds assuming that Assumption \ref{ass:tangency} holds in $Q_{TR}$ for some $T\geq 2$, replacing $\mathscr D(R,\lambda,\mu)$ with
\begin{align*}
    R^{2\alpha}([\lambda]_{\alpha,R}+[\mu]_{\alpha,R})+R^{2\alpha}([\nabla g_0]_{\alpha,Q_{TR}}+[\nabla g_1]_{\alpha,Q_{TR}}).
\end{align*}
and assuming that  instead of Assumption \ref{ass:topological}, it holds that
\begin{align*}
(TR)^{-(d+2)}\int_{\#_R\cap \{|x-y|\geq (T-1)R\}} |x-y|^2 \dd \pi\leq \frac{\e^2}{T^2 R^2}+D_{bdy}(TR)).
\end{align*}
\end{proposition}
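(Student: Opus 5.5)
The proof is a Campanato iteration built on Corollary \ref{cor:oneStepRegularised}, following the interior scheme of \cite[Section 1.3]{GvalaniKoch}. Set $\pi_0=\pi$, $\lambda_0=\lambda$, $\mu_0=\mu$, $R_0'=R$. Define inductively $R_{k+1}=\theta R_k$, and let $\pi_{k+1}=(\pi_k)_{s_k}$, $\lambda_{k+1}$, $\mu_{k+1}$ be obtained from the one-step improvement with change of coordinates $s_k=(B_k,b_k,\gamma_k,1)$. Fix $\beta\in(\alpha,1)$, so that $\theta^{2\beta}<\theta^{2\alpha}$, and abbreviate $\mathscr D_k:=\mathscr D(R_k,\lambda_k,\mu_k)$ and $E_k:=E_p(\pi_k,R_k)$. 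The iteration continues as long as $R_k\geq R_c/\e_1$.

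\textbf{Step 1: propagate the hypotheses inductively.} Suppose $E_k+\mathscr D_k+R_c^2/R_k^2\leq \e_1$, the tangency assumption holds at $0$, $\lambda_k(0)=\mu_k(0)=1$, and the topological condition holds in $Q_{R_k}$. Then Corollary \ref{cor:oneStepRegularised} gives
\[
\mathscr D_{k+1}\leq \theta^{2\alpha}(1+C\e_1^{1/2})\mathscr D_k ,
\]
so $\mathscr D_k\lesssim \theta^{2\alpha' k}\mathscr D_0$ for some $\alpha'<\alpha$. Using that $R_c^2/R_k^2=\theta^{-2k}R_c^2/R^2$ and summing the geometric recursion $E_{k+1}\leq \theta^{2\beta}E_k+C_\beta(\mathscr D_k+R_c^2/R_k^2)$, one obtains
\[
E_k\lesssim \theta^{2\beta k}E_0+\mathscr D_0+R_c^2/R_k^2 ,
\]
where the $R_c^2/R_k^2$ term dominates its own geometric sum since it grows like $\theta^{-2k}$. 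This bound preserves smallness provided $R_k\gg R_c$.

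The \emph{main obstacle} is verifying the topological condition for $\pi_k$ in $Q_{R_k}$. It is only assumed at scale $R$, and the coordinate changes move $Q_{R_k}$. I would obtain it from the $L^\infty$ bound, Corollary \ref{cor:Linfty}, applied at the previous scale $R_{k-1}$ in the current coordinates. That bound gives $|y-x|\lesssim R_{k-1}\max(E^{1/(d+2)},R_c/R_{k-1})$ on $\#_{R_{k-1}/2}$. Since the composed affine maps are $C\e_1^{1/2}$-close to the identity, this forces $|y-x|\ll \theta R_{k-1}=R_k$. Hence transport starting in $Q_{R_k/2}$ stays in $Q_{R_k}$, as required, once $\e_1\ll\theta^{d+2}$.

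\textbf{Step 2: compose the coordinate changes.} The composition of the $s_j$, $j<k$, is again of the form $x\mapsto \tilde A_k x$, $y\mapsto \gamma\tilde A_k(y-\tilde b_k)$. Unwinding, $\pi_k$ approximated by the identity in $\#_{R_k}$ means that $\pi$ is close in $\#_{R_k}$ to the affine map $y=A_{R_k}x+b_{R_k}$, with $A_{R_k}$ built from $\prod_j B_j^{-1}B_j^{-T}\gamma_j^{-1}$. The increments satisfy
\[
|B_j-\textup{id}|^2+|b_j|^2/R_j^2\lesssim E_j+\mathscr D_j+R_c^2/R_j^2 .
\]
The square roots of the first two contributions decay geometrically, so they sum to $\lesssim (E_0+\mathscr D_0)^{1/2}$. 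The $R_c/R_j$ terms sum to $\lesssim R_c/R_k$, again dominated by the last term. This yields the estimate on $|A_r-\textup{id}|^2+|b_r|^2/R^2$, and the products stay near the identity, which keeps all changes of variables comparable (e.g. $\#_{\theta R}\subset Q(\#_R)$).

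\textbf{Step 3: pass to arbitrary $r$, and the case $p=1$.} For a general $r\in[R_c/\e_1,R]$, choose $k$ with $R_{k+1}<r\leq R_k$ and use $E(\pi,r)\leq \theta^{-(d+2)}E(\pi,R_k)$ in the transformed coordinates, changing variables back with the bounded Jacobian. For $p=1$ the same iteration runs, except that the long-trajectory hypothesis must be propagated instead of the topological condition. I would obtain it from Lemma \ref{lem:longTrajectories} and Lemma \ref{lem:mediumTrajectories}, whose exponential and $\e^2$ error terms are absorbed into $R_c^2/R_k^2$ with $R_c=\e$.
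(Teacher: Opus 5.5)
Your scheme is the paper's: iterate Corollary \ref{cor:oneStepRegularised} along $R_k=\theta^kR$ with $\beta>\alpha$, sum the recursions geometrically, compose the affine changes, and interpolate between consecutive radii. Your propagation of Assumption \ref{ass:topological} from $Q_{R_{k-1}}$ to $Q_{R_k}$ via Corollary \ref{cor:Linfty} is a useful addition, since the paper's proof does not spell this point out. Keep the additive $L_1$ term of that corollary, though; in normalised coordinates it is $\lesssim \mathscr D_{k-1}^{1/2}$ and hence also $\ll\theta$.

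The step you are missing is the one the paper treats as the essential remaining check. By Section \ref{sec:changeOfCoordinates}, the change $s_j=(B_j,b_j,\gamma_j,1)$ turns a minimiser of $OT_{\e,h_p}$ into a minimiser of $OT_{\e\gamma_j^{1/2},h_p}$. So $\pi_k$ solves $OT_{\e_k,h_p}(\lambda_k,\mu_k)$ with $\e_k=\e\prod_{j<k}\gamma_j^{1/2}$, not the problem with parameter $\e$. Three quantities refer to this drifted parameter: the smallness hypothesis $\e_k^{2/(d(p-1)+2)}/R_k\le \e_1$ of Corollary \ref{cor:oneStepRegularised}, the error $R_c^2/R_k^2$ in your recursion, and the $\tau(\e)$ in Corollary \ref{cor:Linfty}. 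Meanwhile the number of steps is of order $\log(R/R_c)$, which is unbounded as $\e\to0$. Your induction must therefore also carry $|\prod_{j<k}\gamma_j-1|\ll 1$.

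This does follow from $|\gamma_j-1|^2\lesssim E_j+\mathscr D_j+R_c^2/R_j^2$ by the summation in your Step 2. It does not follow from the bound you state in Step 1. With the non-decaying term $\mathscr D_0$ in $E_k\lesssim \theta^{2\beta k}E_0+\mathscr D_0+R_c^2/R_k^2$, the sum $\sum_j E_j^{1/2}$ is of size $\log(R/R_c)\,\mathscr D_0^{1/2}$. Then neither $\prod_j\gamma_j$ nor your bound on $|A_r-\textup{id}|$ would be uniform in $\e$, and your Step 2 claim that these contributions decay geometrically contradicts your own Step 1 bound.

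The fix is to unroll your recursion correctly. Using $\mathscr D_j\lesssim\theta^{2\alpha' j}\mathscr D_0$ and $\beta>\alpha'$, you get $E_k\lesssim \theta^{2\beta k}E_0+\theta^{2\alpha' k}\mathscr D_0+R_c^2/R_k^2$. With this bound, and with Steps 1 and 2 run as one joint induction that also tracks $\prod_j\gamma_j$ (hence $\e_k\simeq\e$), the argument closes.
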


\begin{proof}
We give the proof in the case $p\in (1,2]$. The case $p=1$ follows by straightforward adjustments arising from the presence of $T$ in Corollary \ref{cor:oneStepRegularised}.

We aim to iteratively apply Corollary \ref{cor:oneStepRegularised}. Set $r_k = \theta^k R$, where $\theta$ is as in Corollary \ref{cor:oneStepRegularised} and fix $\beta>\alpha$. As long as we can justify the use of Corollary \ref{cor:oneStepRegularised}, we obtain a sequence of measures $\lambda_k, \mu_k$, $C^{1,\alpha}$-domains $\Omega_{0,k}$, $\Omega_{1,k}$ described by $C^{1,\alpha}$-functions $g_{0,k}, g_{1,k}$, symmetric matrices $B_k\in \R^{d\times d}$, vectors $b_k\in \R^d$ and real numbers $\gamma_k$ such that
\begin{align}\label{eq:IterationUnjustified}
E_k:=&E_p(\pi_k,r_k)\leq \theta^{2\beta} E_{k-1}+C_\beta\left(\mathscr D(r_{k-1},\lambda_{k-1},\mu_{k-1})+\frac{R_c^2}{r_{k-1}^2}\right)\\
|\gamma_k-1|^2+&|B_k-\textup{id}|^2+\frac{|b_k|^2}{r_{k-1}^2}\lesssim E_{k-1}+\mathscr D(r_{k-1},\lambda_{k-1},\mu_{k-1})\\
\mathscr D(r_k,\lambda_k,\mu_k)\leq& \theta^{2\alpha}\left(1+C\left(E_{k-1}+^\frac 1 2 + \mathscr D(r_{k-1},\lambda_{k-1},\mu_{k-1})^\frac 1 1\right)\right)\mathscr D(r_{k-1},\lambda_{k-1},\mu_{k-1}).
\end{align}

Here $\pi_k$ is a minimiser of regularised optimal transport between $\lambda_k$ and $\mu_k$ with regularisation parameter $\prod_{i=0}^k \gamma_k \e^2$.

Before justifying the application of Corollary \ref{cor:oneStepRegularised}, we claim that \eqref{eq:IterationUnjustified} implies for $r_k \gg R_c$ that there is $C>0$ independent of $k$ such that
\begin{align}\label{eq:claimIteration1}
r_k^{-2\alpha}\mathscr D(r_k,\lambda_k,\mu_k) \leq (1+\theta^{k\alpha}+C r_{k-1}^{-2}R_c^2)r_{k-1}^{-2\alpha}\mathscr D(r_{k-1},\lambda_{k-1},\mu_{k-1})\\
\label{eq:claimIteration2}
\sup_{i\leq k} \theta^{-2i\alpha} E_i\leq E(\pi,R)+C\left( \mathscr D(R,\lambda,\mu)+\frac{R_c^2}{\theta^{2k\alpha} r_k^2}\right).
\end{align}

We prove \eqref{eq:claimIteration1} by induction. The kase $k=1$ is clear, so suppose \eqref{eq:claimIteration1} holds for $k=1, \ldots, K-1$. By the induction hypothesis, we find
\begin{align*}
   \mathscr D(r_{k-1},\lambda_{K-1},\mu_{K-1}) \leq \theta^{2\alpha {K-1}}\prod_{k=1}^{K-2} (1+\theta^{\alpha k}+C r_{k-1}^{-1}R_c)\mathscr D(R,\lambda,\mu).
\end{align*}
Now note for $r_K \gg R_c$, i.e. $K \ll \frac{\log(R_c/R)}{\theta}$,
\begin{align*}
\prod_{k-1}^{K-2} \left(1+\theta^{\alpha k}+\frac{\e}{C\theta^{k-1}R}\right)\leq& \prod_{k-1}^{K-2} \left(1+\theta^{\alpha k}+\frac{R_c}{C\theta^{K-1}R}\right)\\
\leq& \prod_{k-1}^{k_K} (1+\theta^{2\alpha k})\prod_{k_K+1}^{K-2} \left(1+\frac{2C R_c}{\theta^{K-1}R}\right).
\end{align*}
Here $k_K = \max\{k\colon \theta^{\alpha k}\geq C(\theta^{K-1}R)^{-1} \e\}$. The first product clearly admits a bound independent of $k_K$. Regarding the second bound, as long as $\theta^k R\geq C_0 R_c$,
\begin{align*}
\left(1+\frac{2C R_c}{\theta^{K-1}R}\right)^K\leq \left(1+\frac{2C}{C_0}\right)^\frac{\log(C_0R_c/R)}{\log \theta}.
\end{align*}
By elementary calculations, this expression is bounded independently of $K$ if $C_0$ is chosen sufficiently large. It follows that there is $C>0$, independent of $K$ such that as long as $\theta^k R \gg R_c$,
\begin{align}\label{eq:uniformDataBound}
    \mathscr D(r_{K-1},\lambda_{K-1},\mu_{K-1})\leq C<+\infty.
\end{align}
Inserting this into the last equation of \eqref{eq:IterationUnjustified} and using the induction hypothesis, we deduce \eqref{eq:claimIteration1} for $K$.

We turn to proving \eqref{eq:claimIteration2} for $K$. Using \eqref{eq:uniformDataBound}, the induction hypothesis and \eqref{eq:IterationUnjustified}, we may ensure that
\begin{align*}
    \sup_{1\leq i\leq K} \theta^{-(2i\alpha)} E_i\leq& \theta^{2(\beta-\alpha)}\left(\sup_{1\leq i\leq K-1} \theta^{-2i\alpha} E_i+E(\pi,R)\right)+C \mathscr D(R,\lambda,\mu)+\frac{C R_c^2}{ \theta^{2K\alpha} r_K^2}\\
    \leq& E(\pi,R)+C\mathscr D(R,\lambda,\mu)+\frac{C R_c^2}{\theta^{2K\alpha} r_K^2}.
\end{align*}
To obtain the last line, we used that $\beta>\alpha$ and potentially decreased the choice of $\theta$. Thus \eqref{eq:claimIteration2} is established. We note that \eqref{eq:claimIteration2} implies that
\begin{align}\label{eq:uniformBoundEnergy}
    E_k\leq C\left(E(\pi,R)+\mathscr D(R,\lambda,\mu)+\frac{C R_c^2}{r_K^2}\right).
\end{align}

This almost justifies the iterated application of Corollary \ref{cor:oneStepRegularised} in \eqref{eq:IterationUnjustified}. However, at each stage the regularisation parameter changes. Thus the proof is complete, once we prove that the regularisation parameter remains comparable to $\e^2$. Thus, it suffices to establish for $K \ll \frac{\log(R_c/R)}{\theta}$,
\begin{align}\label{eq:iterationFinalClaim}
\left|\prod_{i=1}^K \gamma_k-1\right|\ll 1.
\end{align}
Indeed, with $k_K$ as above,
\begin{align*}
\left|\prod_{i=1}^K \gamma_k-1\right|\leq \prod_{j=1}^{k_K} |\gamma_j|\left(\sum_{i=1}^{k_K} |\gamma_i-1|^2\right)^\frac 1 2
\end{align*}
Using \eqref{eq:IterationUnjustified}, \eqref{eq:uniformDataBound} and \eqref{eq:uniformBoundEnergy}, we find
\begin{align*}
\prod_{j=1}^{k_K} |\gamma_j|^2 \leq& \prod_{j=1}^{k_K} \left(1+E_{j-1}+\mathscr D(r_{j-1},\lambda_{j-1},\mu_{j-1})+\frac{R_c^2}{r_{j-1}^2}\right)\\
\leq& \prod_{j=1}^{k_K} \left(1+\theta^{2 k_K\alpha}+C \sum_{i=1}^{j-1} \frac{R_c^2}{r_i^2}+\theta^{2\alpha(j-1)}+\frac{R_c^2}{r_{j-1}^2}\right)\\
\leq& \prod_{j=1}^{k_K} \left(1+\theta^{2k\alpha}+\frac{C R_c^2}{\theta^{2(j-1)}R^2}\right)\leq C<+\infty.
\end{align*}
Similarly, using \eqref{eq:IterationUnjustified} and \eqref{eq:claimIteration2},
\begin{align*}
\sum_{i=1}^{k_K} |\gamma_k-1|^2 \lesssim& \sum_{i=1}^{k_K} E_{i-1}+\mathscr D(r_{i-1},\lambda_{i-1},\mu_{i-1})+\frac{R_c^2}{r_{i-1}^{2}}\\
\lesssim& \sum_{i=1}^{k_K} \e_1 \theta^{2(i-1)\alpha}+\frac{R_c^2}{r_{i-1}^2}\lesssim \e_1+\frac{R_c^2}{r_{k_K}^2}\lesssim \e_1+C_0^{-2}.
\end{align*}
Enlarging $C_0$ if necessary, this shows \eqref{eq:iterationFinalClaim}.

Similar estimates show that with $B_k=\prod_{i=1}^{k}B_i$ and $b_k=\sum_{i=1}^k \Gamma_{i-1}^{-1} M_{i-1}^{-1} b_i$ where $M_i = B_i\cdots B_1$ and $\Gamma_i=\gamma_i\cdots \gamma_1$, we have for $k\leq k_K$,
\begin{align*}
|A_k-\textup{id}|^2+\frac{|b_k|}{R}\lesssim E(\pi,R)+\mathscr D(R,\lambda,\mu)+\frac{R_c^2}{r_k^2}.
\end{align*}

For $k\leq k_K$, rewriting the estimate for $E_{k}$ in \eqref{eq:IterationUnjustified} in terms of $\pi$, it is an easy calculation to see that
\begin{align*}
    \frac 1 {r^{d+2}} \int_{(Q_r(0)\times \R^d)\cup (\R^d\times Q_r(b_k))} |y-A_k x-b_k|^2 d\pi \lesssim E_p(\pi,R)+\mathscr D(R,\lambda,\mu)+\frac{R_c^2}{r_k^2}.
\end{align*}
Filling in the gaps between $r_k$ and $r_{k+1}$ in a routine fashion this completes the proof.
\end{proof}
\begin{remark}\label{CampanatoDecay}
   We remark that it is possible to slightly sharpen Proposition \ref{prop:largeScale}. Retaining the decay in \eqref{eq:claimIteration2}, it is straightforward by the same arguments as in the proof of Proposition \ref{prop:largeScale} to prove
    \[\frac{E_k}{r_k^{2\alpha}}\lesssim\frac{E_p(\pi,R)+\mathscr D(R, \lambda, \mu)}{R^{2\alpha}}+\frac{R_c^2}{r_k^{2+2\alpha}}, \quad r_k = \theta^k R\]
    as long as $\frac{R_c^2}{r_k^{2+2\alpha}} R^{2\alpha}\ll 1 \Leftrightarrow \left(\frac{R_c}{R}\right)^2\ll \left(\frac{r_k}{R}\right)^{2+2\alpha}$.
For quadratic OT, $R_c=0$ and the decay holds at arbitrarily small $r_k$.
\end{remark}

\subsection{Small-scale geometric bounds}\label{sec:smallScaleResults}
We begin with a geometric observation.
\begin{lemma}\label{lem:convexHullGeometry}
Suppose $\Omega\subset \R^d$ is a $C^{1,\alpha}$-domain and $\omega$ is a convex domain. Let $\alpha\in (0,1)$. There are $C=C(\Omega),\delta=\delta(\Omega)>0$ such that if $\textup{diam}(\omega\cap\Omega)\leq \delta$, then for $x\in \textup{conv}(\omega\cap \Omega)$,
\begin{align*}
d(x,\Omega)\leq C\textup{diam}(\omega\cap\Omega)^{1+\alpha}.
\end{align*}
Moreover, if $\omega \cap \Omega$ contains a ball of radius $c \textup{diam}(\omega \cap \Omega)$ for some $c>0$, then the estimate can be improved to
\begin{align*}
d(x,\omega\cap\Omega)\leq C\textup{diam}(\omega\cap\Omega)^{1+\alpha}
\end{align*}
for some $C=C(\Omega,c)$.
\end{lemma}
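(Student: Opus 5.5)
We localise near the boundary and work in graph coordinates. Write $\rho:=\textup{diam}(\omega\cap\Omega)$ and fix $z_0\in\omega\cap\Omega$. If $d(z_0,\partial\Omega)>\rho$, then $B(z_0,\rho)\subset\Omega$, and this ball is convex and contains $\omega\cap\Omega$. Hence $\textup{conv}(\omega\cap\Omega)\subset\Omega$ and the first claim holds trivially. Otherwise, pick $p\in\partial\Omega$ with $|p-z_0|\le\rho$. Since $\Omega$ is a compact $C^{1,\alpha}$-domain, there are uniform $r_0,L$ such that, after a rotation and translation taking $p$ to $0$ with normal $-e_1$, we have $\Omega\cap Q_{r_0}=\{x_1>g(x')\}$ with $g(0)=0$, $\nabla g(0)=0$ and $[\nabla g]_{\alpha}\le L$. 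Choosing $\delta\ll r_0$ gives $\omega\cap\Omega\subset Q_{2\rho}\subset Q_{r_0}$.

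\textbf{First claim.} For $x'\in Q'_{2\rho}$, the $C^{1,\alpha}$ bound gives $|g(x')|\le L|x'|^{1+\alpha}\le CL\rho^{1+\alpha}$, so $\omega\cap\Omega\subset\{x_1\ge -C\rho^{1+\alpha}\}\cap Q_{2\rho}$. This is a convex set, so $\textup{conv}(\omega\cap\Omega)$ also lies in it. Now take $x=(x_1,x')$ in the convex hull. If $x_1>g(x')$, then $x\in\Omega$. Otherwise $g(x')\ge x_1\ge -C\rho^{1+\alpha}$ and $g(x')\le C\rho^{1+\alpha}$. The point $(g(x')+,x')$ lies in $\bar\Omega$ at vertical distance $g(x')-x_1\le 2C\rho^{1+\alpha}$ from $x$. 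Therefore $d(x,\Omega)\le C\rho^{1+\alpha}$.

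\textbf{Second claim.} Here we need distance to $\omega\cap\Omega$ rather than to $\Omega$, and the new ingredient is the interior ball $B(z,c\rho)\subset\omega\cap\Omega$. Given $x\in\textup{conv}(\omega\cap\Omega)$, Step 1 yields $\hat x\in\bar\Omega$ with $|x-\hat x|\le C\rho^{1+\alpha}$, where $\hat x$ is a vertical translate of $x$. The difficulty is that $\hat x$ need not lie in $\omega$. To fix this, use convexity of $\omega$ and shrink towards the ball. Set
\[
x_t:=(1-t)x+tz,\qquad t:=C'\rho^{\alpha}.
\]
Since $x\in\bar\omega$ and $B(z,c\rho)\subset\omega$, convexity gives $B(x_t,tc\rho)\subset\omega$, i.e. a ball of radius $cC'\rho^{1+\alpha}$. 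We also have $|x_t-x|\le t\rho=C'\rho^{1+\alpha}$. It remains to check that $B(x_t,tc\rho)$ meets $\Omega$. The point $z$ lies in $\Omega\cap Q_{2\rho}$ at height at least $g(z')+c\rho$. Along the segment from $x$ to $z$, the height of $x_t$ above the graph is at least
\[
(1-t)\bigl(x_1-g(x')\bigr)+t\,c\rho-C\rho^{1+\alpha},
\]
where the last term accounts for the deviation of $g$ from linearity on $Q'_{2\rho}$, which is $\lesssim\rho^{1+\alpha}$. Using $x_1-g(x')\ge -2C\rho^{1+\alpha}$, this is at least $-3C\rho^{1+\alpha}+C'c\rho^{1+\alpha}$, which is positive once $C'\gg C/c$. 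Then $x_t$ itself lies in $\Omega\cap\omega$, so $d(x,\omega\cap\Omega)\le|x-x_t|\le C'\rho^{1+\alpha}$ with $C'=C'(\Omega,c)$.

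\textbf{Main obstacle.} The delicate step is the second claim, specifically producing a point lying in both $\omega$ and $\Omega$ near $x$. Projecting onto $\Omega$ alone fails, and convexity of $\omega$ has to be combined with the almost-flatness of $\partial\Omega$. The key point is that $g$ is affine up to an $O(\rho^{1+\alpha})$ error on the relevant scale. The interpolation $x_t$ is what I would try first; care is needed to keep all constants uniform via the uniform $C^{1,\alpha}$ charts, so that $C$ and $\delta$ depend only on $\Omega$, and on $c$ in the second part.
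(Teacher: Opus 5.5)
Your proof is correct and follows the same strategy as the paper. Both work in a uniform $C^{1,\alpha}$ chart and use that $g$ is affine up to $O(\rho^{1+\alpha})$ at scale $\rho$ to put the hull within $O(\rho^{1+\alpha})$ of the graph. For the second claim, both move from $x$ toward a point of the interior ball by a fraction $t\sim\rho^{\alpha}$, so that the height above the graph becomes positive and convexity of $\omega$ keeps the point in $\omega$.

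The only difference is in the first step. The paper uses a Jensen-type Taylor expansion of $g$ at the barycentre $\bar x'$, together with the convex lower envelope $l$ of $\omega$, which is not actually needed. You instead centre the chart at a boundary point within distance $\rho$ and use the cruder bound $|g|\lesssim\rho^{1+\alpha}$. This reduces that step to a slab argument and shows that convexity of $\omega$ plays no role in the first claim.
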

\begin{proof}
Set $d= \textup{diam}(\omega\cap\Omega)$. We ensure $d<r_0$, where $r_0$ is the radius of a $C^{1,\alpha}$-boundary chart of $\Omega$. Then, we may assume $d(\omega\cap\Omega,\partial \Omega)<d$, as else $\textup{conv}(\omega\cap\Omega)\subset \Omega$ and the claim automatically holds.

After a change of coordinates, we may restrict ourselves to the situation where
\begin{align*}
\{(x',x_d)\colon x'\in B_{r_0}', r_0>x_d>g(x')\} \supset \omega\cap \Omega
\end{align*}
with $g(0)=0$, $\nabla g(0)=0$ and $[\nabla g]_{C^{0,\alpha}}\leq L$.

For $x'\in B_{r_0}'$, set $I(x') = \{t\colon (x',t)\in \omega\}$. As $\omega$ is convex, $I(x')$ is an interval and we write $I(x') = [l(x'),u(x')]$. Note that $l(x')$ is a convex function on the convex set where $I(x')$ is non-empty. In particular, we find
\begin{align*}
\Omega\cap \omega = \{(x',t)\colon x'\in B_{r_0}', t\in [\max(l(x'),g(x')), u(x')]\}.
\end{align*}

Let now $x_i = (x_i',t_i)\in \Omega \cap \omega$ and consider $\bar x = \sum \lambda_i x_i$, where $\sum \lambda_i = 1$. We may assume that $\bar x \not \in \Omega$, as else $d(\bar x,\omega\cap\Omega)=0$. Introduce $\bar x' = \sum \lambda_i x_i'$ and $\bar t = \sum \lambda_i t_i$. As $\omega$ is convex, $\bar t\in I(\bar x')$. Noting $t_i \geq \max(l(x_i'),g(x_i'))$, we estimate
\begin{align*}
\bar t \geq \sum \lambda_i \max(l(x_i'),g(x_i')).
\end{align*}
By convexity of $l$,
\begin{align}\label{eq:convexpart}
\sum \lambda_i l(x_i') \geq l(\bar x').
\end{align}
We claim that
\begin{align}\label{eq:claim}
\sum \lambda_i g(x_i')\geq g\left(\bar x'\right)- C L d^{1+\alpha}.
\end{align}
By Taylor-expansion, for some absolute constant $C>0$,
\begin{align*}
g(x_i') = g(\bar x')+\langle \nabla g(\bar x'),x_i'-\bar x'\rangle+R_i,\quad |R_i|\leq C L |x_i'-\bar x'|^{1+\alpha}.
\end{align*}
Consequently,
\begin{align*}
\sum \lambda_i g(x_i')=& g(\bar x')+\sum \langle \nabla g(\bar x'),\lambda_i(x_i'-\bar x')\rangle + \sum_i \lambda_i R_i\\
=& g(\bar x')+\sum_i \lambda_i R_i\geq g(\bar x')- C L d^{1+\alpha}.
\end{align*}
Combining \eqref{eq:convexpart} and \eqref{eq:claim}, we deduce
\begin{align}\label{eq:bargLower}
\bar t \geq \max(l(\bar x'), g(\bar x')-C L d^{1+\alpha})\geq \max(l(\bar x'), g(\bar x'))-C L d^{1+\alpha}.
\end{align}
Set $\tilde x = (\bar x', \max(l(\bar x'), g(\bar x'))$. Recall that $\bar t< g(\bar x')$ since $\bar x\notin\Omega$ and that by definition $\tilde x\in \Omega$. Therefore, 
\begin{align*}
d(\bar x,\Omega)\leq |\bar x-\tilde x| = \max(l(\bar x'), g(\bar x'))-\bar t\leq C L d^{1+\alpha}.
\end{align*}

For the moreover part, suppose $B_\rho(y_0)\subset \Omega\cap\omega$, where $\rho=c\diam(\omega\cap\Omega)$. Consider $z=y_0+\frac{\rho} 2 e_d$. Note $z=(z',t_z)\in \Omega\cap\omega$ and $t_z-g(z')\geq \frac{\rho} 2$. As $\omega$ is convex, the segment connecting $\bar x$ and $z$ is contained in $\omega$. Consider $(p',t_p)=p:=\bar x+s(z-\bar x)\in \omega$ for $s\in [0,1]$. Now by \eqref{eq:claim},
\begin{align*}
g(p')\leq s g(z')+(1-s) g(\bar x')+C L d^{1+\alpha}.
\end{align*}
We deduce
\begin{align*}
t_p-g(p')\geq& s (t_z-g(z))+(1-s)(\bar t-g(\bar x'))-C L d^{1+\alpha}\\
\stackrel{\eqref{eq:bargLower}}\geq& \frac{s \rho} 4-2C L d^{1+\alpha}
\end{align*}
Choose $s=K d^\alpha$ for a sufficiently large constant $K>0$ to ensure $t_p-g(p')> 0$ and hence $p\in \Omega \cap \omega$. Moreover, since $|y_0-\bar x|\leq 2 d$,
\begin{align*}
d(\bar x, \omega\cap \Omega)\leq |p-\bar x|\leq s (|y_0-\bar x|+\frac{\rho} 2)\lesssim  d^{1+\alpha}.
\end{align*}
This concludes the proof.
\end{proof}

We now proceed to establish small-scale geometric properties of $\mathcal S_x$ and $\mathcal T_y$.
\begin{proposition}\label{lem:smallScaleGeometric}
Let $p\in (1,2]$. Assume $\supp\,\lambda,\,\supp\,\mu$ are $C^{1,\alpha}$-domains. Suppose $\pi$ solves $OT_{\e,h_p}(\lambda,\mu)$. There are $C_0,\e_1>0$ such that if $E(\pi,C_0 R_c)+\e+C_0^{-1}+\e\leq \e_1$, then there is $C_1>0$, independent of $\e$, such that for $x\in B_{C_0 R_c/6}, \,y\in B_{C_0 R_c/6}$,
\begin{align*}
 &B\left(\frac 1 2\nabla \phi_\e(x),C_1^{-1}R_c\right)\cap \supp\,\mu \subset \mathcal S_x\subset B\left(\frac 1 2\nabla \phi_\e(x),C_1 R_c\right),\\
 &B\left(\frac 1 2\nabla \psi_\e(y),C_1^{-1}R_c\right)\cap \supp\,\lambda \subset \mathcal T_y\subset B\left(\frac 1 2\nabla \psi_\e(y),C_1 R_c\right).
\end{align*}
Moreover, $\left|B\left(\frac 1 2\nabla \phi_\e(x),C_1^{-1}R_c\right)\cap \supp\,\mu\right|\geq C_1^{-1} R_c^d$.
\end{proposition}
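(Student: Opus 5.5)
We will follow the interior strategy of \cite[Theorem 3]{GvalaniKoch1} and \cite[Theorem 3.1]{GonzalezGvalaniKoch}. The difference is that $\mathcal S_x$ is now only a superlevel set of a concave function intersected with the possibly non-convex set $\supp\,\mu$. Throughout, $x\in B_{C_0R_c/6}$ is fixed and we write $\xi(x,\cdot)=2\langle x,\cdot\rangle-\phi_\e(x)-\psi_\e(\cdot)$. Then $\mathcal S_x=\{y\in\supp\,\mu\colon \xi(x,y)\geq 0\}$ (using $h_p'(0)=0$). The claims for $\mathcal T_y$ follow by symmetry.

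\textbf{Step 1: smallness and Hessian bounds.} Corollary \ref{cor:Linfty} (with $b=0$) applies at scale $C_0R_c$ and gives $\mathcal S_x\subset B(x,\tau(C_0)C_0R_c)$, with $\tau(C_0)C_0\lesssim 1$ once $E(\pi,C_0R_c)\ll1$. Indeed $\max(E^{1/2},E^{1/(d+2)},\tau(\e)/(C_0R_c))C_0R_c\lesssim R_c$ after choosing $E\ll C_0^{-(d+2)}$. Hence $\operatorname{diam}\mathcal S_x\lesssim R_c$. Proposition \ref{prop:smallScale} gives $|\nabla^2\psi_\e|\leq C_2$ on the relevant set. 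Since $\psi_\e$ is convex, $y\mapsto \xi(x,y)$ is concave with Hessian bounded by $C_2$ in absolute value. We also use the upper bound from Proposition \ref{prop:smallScale}'s proof: $\xi_m:=\max_{\mathcal S_x}\xi(x,\cdot)$ satisfies $\xi_m\simeq \e^2R_c^{-d(p-1)}\simeq R_c^2$.

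\textbf{Step 2: the outer inclusion.} We will show that the maximiser $y_m$ is within $CR_c$ of $\frac12\nabla\phi_\e(x)$. This holds because, by \eqref{eq:gradPhiPower}, $\frac12\nabla\phi_\e(x)$ is a weighted average of $y\in\mathcal S_x$. It therefore lies in $\operatorname{conv}(\mathcal S_x)$, and $\operatorname{diam}\mathcal S_x\lesssim R_c$. This already gives $\mathcal S_x\subset B(\frac12\nabla\phi_\e(x),C_1R_c)$.

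\textbf{Step 3: the inner inclusion and the volume bound.} First we show $\xi(x,\frac12\nabla\phi_\e(x))\gtrsim R_c^2$, after projecting $\frac12\nabla\phi_\e(x)$ onto $\supp\,\mu$ if needed. For this we combine three facts:
\begin{itemize}
\item concavity: $\xi$ at a convex combination dominates the combination of the values;
\item the lower bound $\int_{\mathcal S_x}\xi^{q-2}\gtrsim \xi_m^{q-2+d}/L_\infty^d$ from the cone argument in Proposition \ref{prop:smallScale};
\item $\xi_m\simeq R_c^2$.
\end{itemize}
Together these show that the weights $\xi^{\frac{2-p}{p-1}}$ are concentrated where $\xi\gtrsim R_c^2$. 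The average $\bar y=\frac12\nabla\phi_\e(x)$ then satisfies $\xi(x,\bar y)\geq c R_c^2$ by concavity, extended to $\R^d$ via the convex extension of $\psi_\e$ from Proposition \ref{sec:basicPropertiesPotentials}.

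Next, the Hessian bound gives $\xi(x,y)\geq cR_c^2-C|\nabla_y\xi(x,\bar y)||y-\bar y|-C_2|y-\bar y|^2$. Moreover $|\nabla_y\xi(x,\bar y)|\lesssim R_c$, because $\xi\geq0$ holds on a set of diameter $\simeq R_c$ around $\bar y$ and $\xi$ is concave with bounded Hessian. Hence $\xi>0$ on $B(\bar y,C_1^{-1}R_c)$, so $B(\bar y,C_1^{-1}R_c)\cap\supp\,\mu\subset\mathcal S_x$.

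It remains to get the volume bound, and here we use Lemma \ref{lem:convexHullGeometry}. Take $\omega=\{\xi(x,\cdot)\geq0\}$, which is convex with $\operatorname{diam}\lesssim R_c$. Since $\bar y\in\operatorname{conv}(\omega\cap\supp\,\mu)$, the lemma gives $d(\bar y,\supp\,\mu)\lesssim R_c^{1+\alpha}\ll C_1^{-1}R_c$. The $C^{1,\alpha}$ (hence interior cone) condition of $\supp\,\mu$ then yields $|B(\bar y,C_1^{-1}R_c)\cap\supp\,\mu|\gtrsim R_c^d$.

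\textbf{Main obstacle.} We expect the hardest part to be Step 3: proving rigorously that the barycenter $\bar y$ has $\xi(x,\bar y)\gtrsim R_c^2$ uniformly. The weights are supported on a possibly non-convex set, so the interior argument cannot be used as is. We would first try to use the moreover part of Lemma \ref{lem:convexHullGeometry}, with the cone at $y_m$ furnishing a ball of radius $\gtrsim R_c$ inside $\omega\cap\supp\,\mu$. This would control the defect from non-convexity at order $R_c^{1+\alpha}$, which is negligible against $R_c$.
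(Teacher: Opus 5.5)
Your overall architecture is the paper's. You get the outer inclusion from the $L^\infty$ bounds of Section \ref{sec:Linfty} together with $\bar y:=\frac12\nabla\phi_\e(x)\in\textup{conv}(\mathcal S_x)$. You get $\xi(x,\bar y)\gtrsim R_c^2$ from Jensen applied to the concave, globally extended $\xi(x,\cdot)$. A gradient bound then spreads positivity to a ball of radius $\sim R_c$, and Lemma \ref{lem:convexHullGeometry} with the interior cone condition gives the volume bound.

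\textbf{The gap: the gradient bound at $\bar y$.} The step that does not go through as written is $|\nabla_y\xi(x,\bar y)|\lesssim R_c$. Your reason, ``$\xi\geq 0$ on a set of diameter $\simeq R_c$ around $\bar y$'', does not bound the gradient by itself, since the set could be thin or $\bar y$ could sit near its edge. Knowing that $\bar y$ lies well inside $\{\xi(x,\cdot)\geq0\}$ at scale $R_c$ is exactly the inner inclusion you are proving, so the argument is circular. (Also, only $\textup{diam}(\mathcal S_x)$ is controlled, not $\textup{diam}\{\xi(x,\cdot)\geq 0\}$; Lemma \ref{lem:convexHullGeometry} fortunately needs only the former.)

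The missing idea is the dual barycentre formula. For $y\in\mathcal S_x$ you have $x\in\mathcal T_y$, and by the $y$-analogue of \eqref{eq:gradPhiPower}, $\frac12\nabla\psi_\e(y)\in\textup{conv}(\mathcal T_y)$. Hence $|\nabla_y\xi(x,y)|=2|x-\frac12\nabla\psi_\e(y)|\leq 2\,\textup{diam}(\mathcal T_y)\lesssim R_c$. The Lipschitz bound on $\nabla\psi_\e$ from Proposition \ref{prop:smallScale} extends this to points of $\supp\,\mu$ within $O(R_c)$ of $\mathcal S_x$. The same bound is what underlies the cone estimate you quote for $\xi_m$.

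You should then use concavity in first-order form, with the gradient taken at $y$:
$\xi(x,y)\geq\xi(x,\bar y)-|\nabla_y\xi(x,y)|\,|y-\bar y|\geq cR_c^2-CR_c|y-\bar y|$ for $y\in\supp\,\mu$. Your second-order expansion at $\bar y$ would need Hessian control on segments $[\bar y,y]$. But $\bar y$ may lie outside $\supp\,\mu$ (at distance $\lesssim R_c^{1+\alpha}$), where Proposition \ref{prop:smallScale} says nothing about the extended $\psi_\e$. With the first-order form you get $B(\bar y,C_1^{-1}R_c)\cap\supp\,\mu\subset\mathcal S_x$ directly, without the paper's detour through $y_0$ and $y_m$.

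\textbf{The step you flag as the main obstacle.} This is not actually an obstacle. Jensen only needs concavity of $\xi(x,\cdot)$ on $\textup{conv}(\mathcal S_x)$, which the convex extension of $\psi_\e$ (Proposition \ref{sec:basicPropertiesPotentials}) provides whatever the shape of $\supp\,\mu$. The paper closes it in one line:
$\xi(x,\bar y)\geq(\int_{\mathcal S_x}\xi^{q-1}\,d\mu)/(\int_{\mathcal S_x}\xi^{q-2}\,d\mu)$, where $\frac1p+\frac1q=1$.
The numerator equals $q^{q-1}\e^{2(q-1)}$ exactly, by the marginal equation. The denominator is bounded \emph{above} by $\xi_m^{q-2}\mu(\mathcal S_x)\lesssim\xi_m^{q-2}R_c^d$. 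Combined with $\xi_m\lesssim R_c^2$, this gives $\xi(x,\bar y)\gtrsim R_c^2$.

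Your concentration argument, built on the \emph{lower} bound for $\int\xi^{q-2}$, does work for $p<2$. It degenerates at $p=2$, where the weights are constant, although the direct bound is immediate there. Non-convexity of $\supp\,\mu$ enters only when locating $\bar y$ relative to $\supp\,\mu$. You handle that correctly with the first part of Lemma \ref{lem:convexHullGeometry}; the moreover part is not needed.
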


\begin{proof}
By symmetry, it suffices to consider $x\in B_{C_0 R_c/3}$. The second inclusion is an immediate consequence of Section \ref{sec:Linfty}. Moreover, by Proposition \ref{prop:smallScale}, $\phi_\e,\psi_\e$ are $W^{2,\infty}$-functions on $B_{C_0 R_c/3}$. Increasing $C_0$ if necessary, we deduce that for some $C>0$, independent of $\e$ and $C_0$,
\begin{align}\label{eq:Lipschitz}
\|\nabla\phi_\e\|_{W^{1,\infty}\left(\cup_{y\in B_{C_0 R_c/6}}\mathcal T_y\right)}+\|\nabla\psi_\e\|_{W^{1,\infty}\left(\cup_{x\in B_{C_0 R_c/6}}\mathcal S_x\right)}\leq C.
\end{align}

Note that by Jensen's inequality, with $\frac 1 p+\frac 1 q=1$,
\begin{align*}
\xi(x,\frac{1}{2} \nabla \phi_\e(x)) \geq \left(\int_{\mathcal S_x} \xi(x,y)^{q-2}\dd\mu(y)\right)^{-1}\int_{\mathcal S_x} \xi(x,y)^{q-1}\dd \mu(y).
\end{align*}

We estimate,
\begin{align*}
\int_{\mathcal S_x} \xi(x,y)^{q-2}\dd \mu(y)\lesssim \max_{y\in \mathcal S_x} \xi(x,y)^{q-2} |\mathcal S_x|.
\end{align*}
Consequently, using the optimality condition and that we already established $|\mathcal S_x|\lesssim R_c^d$,
\begin{align*}
\xi\left(x,\frac{1}{2}\nabla \phi_\e(x)\right)\gtrsim& \frac{\e^{2(q-1)}}{R_c^d\max_{y\in \mathcal S_x} \xi(x,y)^{q-2}}.
\end{align*}

Finally, we use the concavity of $\xi$ to estimate for $y\in \mathcal S_x$,
\begin{align*}
   \xi(x,y) \geq  \xi(x,y_m)-\langle \nabla_y \xi(x,y),y_m-y\rangle.
\end{align*}
where $y_m$ is chosen so that $\xi(x,y_m) = \xi_m:=\max_{y\in \mathcal S_x} \xi(x,y)$. Now, for any $z\in \mathcal T_y$,
\begin{align*}
|\nabla_y \xi(x,y)|=2\left|x-\frac 1 2\nabla \psi_\e(y)\right|\leq  2|z-x|+2\left|z-\frac 1 2\nabla \psi_\e(y)\right|\leq 2\textup{diam}(\mathcal T_y)+2\left|z-\frac 1 2\nabla \psi_\e(y)\right|.
\end{align*}

As $\frac 1 2 \nabla \psi_\e(y)\in \textup{conv}(\mathcal T_y)$, $d\left(\frac 1 2 \nabla \psi_\e(y),\mathcal T_y\right)\leq \textup{diam}(\mathcal T_y)\lesssim R_c$. Taking infimums over $z$, we thus deduce $|\nabla_y \xi(x,y)|\lesssim R_c$ and hence for some $C>0$,
\begin{align*}
    \xi(x,y)\geq \xi(x,y_m)-C R_c |y_m-y|.
\end{align*}
Inserting this estimate into the Schr\"odinger equation and using the interior cone property, we deduce for $\lambda$-a.~e.~$x$,
\begin{align*}
q^{q-1}\e^{2(q-1)}=& \int_{\mathcal S_x} \xi(x,y)^{q-1}\dd\mu(y)\geq \int (\xi(x,y_m)-C R_c |y_m-y|)_+^{q-1}\dd\mu(y)\\
\gtrsim& \int (\xi(x,y_m)-C R_c t)_+^{q-1}t^{d-1}\dd t\gtrsim \frac{\xi(x,y_m)^{q+d-1}}{R_c^d}
\end{align*}
Combining estimates, we have shown
\begin{align*}
    \xi\left(x,\frac 1 2\nabla \phi_\e(x)\right)\geq C\frac{\e^{2(q-1)}}{R_c^d \e^\frac{2(q-1)(q-2)}{q+d-1}R_c^\frac{d(q-2)}{q+d-1}}=C R_c^2.
\end{align*}
By concavity of $\xi$, this shows for any $y$,
\begin{align*}
    \xi(x,y)\geq& \xi\left(x,\frac 1 2\nabla \phi_\e(x)\right)-\langle \nabla_y \xi(x,y),\frac 1 2\nabla \phi_\e(x)-y\rangle\\
    \geq& C R_c^2 - 4\left|x-\frac 1 2\nabla \psi_\e(y)\right|\left|y-\frac 1 2\nabla \phi_\e(x)\right|.
\end{align*}

By Lemma \ref{lem:convexHullGeometry}, applied to $\omega = \{y\colon \xi(x,y)\geq 0\}$ and $\Omega = \supp\,\mu$ we may find $y_0\in \supp\,\mu$ such that $\left|y_0-\frac 1 2 \nabla \phi_\e(x)\right|\leq c R_c^{1+\alpha}$. Further, for any $y\in \mathcal S_x$, using Proposition \ref{prop:smallScale},
\begin{align*}
\left|x-\frac 1 2 \nabla \psi_\e(y_0)\right|\leq& \left| x-\frac 1 2 \nabla \psi_\e(y)\right|+C |y-y_0|\\
\leq&\left| x-\frac 1 2 \nabla \psi_\e(y)\right|+C \left|y-\nabla \phi_\e(x)\right|+\left|\frac 1 2\nabla \phi_\e(x)-y_0\right|
\end{align*}
We have already shown that  $\left| x-\frac 1 2 \nabla \psi_\e(y)\right|\lesssim R_c$. A symmetric argument gives $\left| y-\frac 1 2 \nabla \phi_\e(x)\right|\lesssim R_c$. We deduce that for some $c>0$,
\begin{align*}
\xi(x,y_0)\geq C R_c^2 - c R_c^{2+\alpha}.
\end{align*}
In particular, $\xi(x,y_m)\geq \xi(x,y_0)\geq \frac{C} 2 R_c^2$ for a sufficiently small choice of $\e_1$. Since for $y\in\mathcal S_x$, due to Section \ref{sec:Linfty} and Lemma \ref{lem:convexHullGeometry},

\begin{align*}
    |\nabla_y \xi(x,y)| = 2|x-\frac 1 2\nabla \psi_\e(y)|\leq \inf_{z\in \mathcal T_y} 2|\frac 12 \nabla \psi_\e(y)-z|+2|x-z|\leq c(\e_1) R_c+R_c^{1+\alpha},
\end{align*}
we conclude that there is $c>0$ such that $B(y_m,2c R_c)\cap \supp\,\mu\subset \mathcal S_x$. By the interior cone property, it holds that $|B(y_m,2c R_c)\cap \supp\,\mu|\gtrsim R_c^d$. Reducing $\e_1$ if necessary, we may further ensure $B(y_m,c/2 R_c)\cap \supp\,\mu\subset B\left(\frac 1 2\nabla \phi_\e(x),c R_c\right)\cap \supp\,\mu\subset B(y_m,2c R_c)\cap \supp\,\mu$ by applying the moreover part of Lemma \ref{lem:convexHullGeometry}. Consequently,
\begin{align*}
    B\left(\frac 1 2\nabla \phi_\e(x),c R_c\right)\cap \supp\,\mu\subset \mathcal S_x,
\end{align*}
and
\begin{align*}
    \left|B\left(\frac 1 2\nabla \phi_\e(x),c R_c\right)\cap \supp\,\mu\right|\geq |B(y_m,c/2 R_c)\cap \supp\,\mu|\gtrsim R_c^d.
\end{align*}
This completes the proof.
\end{proof}

\subsection{Global regularity and geometric bounds}\label{sec:mainResultProof}
We now prove Theorem \ref{thm:main}.

\begin{proof}
We consider first the case $p\in (1,2]$ and comment at the end on the necessary changes for $p=1$. By symmetry it suffices to prove the statements involving $\phi_\e$.

For any $\e_2>0$, there exists $R_0>0$ such that for $R\leq R_0$, for any cube $Q_R(x)$ centered at some $x$,
\begin{align*}
R^{2\alpha}([\lambda]_{\alpha,Q_R(x)}^2+[\mu]_{\alpha,Q_R(x)}^2)<\e_2,
\end{align*}
as well as
\begin{align*}
R^{2\alpha}([\nabla g_0]^2_{\alpha,Q_R'(x)}+[\nabla g_1]^2_{\alpha,Q_R'(x)})<\e_2,
\end{align*}
whenever $g_0$ and $g_1$ describe (up to a rigid motion) $\partial \supp\,\lambda$ and $\partial \supp\,\mu$, respectively, in $Q_R(x)$. If $Q_R(x)\cap \partial\supp\lambda =\emptyset$ or $Q_R(x)\cap \partial \supp\,\mu=\emptyset$, we set the corresponding term to be $0$. 

Fix $x_0\in \supp\,\lambda$. If $Q_{R_0}(x_0)\subset \supp\,\lambda$, the result follows from the interior results \cite[Theorem 3]{GvalaniKoch1} and \cite[Theorem 3.1]{GonzalezGvalaniKoch}. Thus, we assume $Q_{R_0}(x_0)\cap \partial \supp\,\lambda\neq\emptyset$ and find $x_1\in \partial \supp\,\lambda$ such that $|x_0-x_1|=d(x_0,\partial \supp\,\lambda)=:R$. By translation, we may assume that $x_1=0$. Consider the affine change of coordinates, as described in Section \ref{sec:changeOfCoordinates} associated to $s=(A,b,\gamma,\kappa)=\left(\nabla T(0)^\frac 1 2,  T(0),\left(\frac{\lambda(0)}{\mu(T(0))}\right)^\frac 1 d,\frac 1 {\lambda(0)}\right)$. Then by Section \ref{SS:bdy-normalization}, $\Omega_0=\supp\,\lambda_s$ and $\Omega_1=\supp\,\mu_s$ satisfy Assumption \ref{ass:tangency}. Moreover, $\lambda_s(0)=\mu_s(0)=1$. 

Due to the $C^{1,\alpha}$-regularity of $T$, it holds $|A|+|A^{-1}|+|b|\lesssim \|T^{-1}\|_{C^{1,\alpha}(\supp\,\mu)}+\|T\|_{C^{1,\alpha}(\supp\,\lambda)}$. Note that $A$ is symmetric. In particular, this shows that choosing $\tau>0$ sufficiently small, we can ensure $\#_{\tau R_0}\subset Q(\#_{R_0})$, where $Q$ is the affine transformation associated to $s$.

Further, since $T$ is a $C^{1,\alpha}$ map,
\begin{align*}
\int_{Q_{R_0}} |T(x)-A^2x-b|^2 \dd \lambda(x)\lesssim R^{d+2+2\alpha}.
\end{align*}
Consequently, applying also Lemma \ref{prop:EnergyConvergence} and Lemma \ref{lem:EnergyConvergence},
\begin{align*}
\int_{\#_{R_0}} |y-A^2x-b|^2 \dd\pi_\e\lesssim& \int_{\#_{R_0}} |y-T(x)|^2+\int_{\#_{R_0}} |T(x)-A^2x-b|^2 \dd\pi_\e(x)\\
\lesssim& R_c^2+ R_0^{d+2+2\alpha}.
\end{align*}
Moreover, note that due to Proposition \ref{prop:EnergyConvergence} ,
\begin{align*}
\e^2\int_{\#_{R_0}} h_p\left(\frac{\dd \pi_\e}{\dd P}\right)\lesssim R_c^2.
\end{align*}
Consequently,
\begin{align*}
E_p(\pi_{s},\tau R_0)\lesssim \frac{R_c^2}{R_0^{d+2}} + R_0^{2\alpha}.
\end{align*}
Further, by \cite[Corollary 5.4.]{Wiesel}, there exists $\delta(\e)\to 0$ as $\e\to 0$ such that
\begin{align*}
\sup_{(x,y)\in \supp\pi_\e}|y-T(x)|\leq \delta(\e).
\end{align*}
Reducing first $R_0$ and ensuring $\e$ is sufficiently small, we may ensure, independently of $p$,
\begin{align}\label{eq:smallnessEnergy}
E_p(\tau R_0,\pi_{s})+\mathscr D(\tau R,\lambda_{s},\mu_{s})\leq \e_2,
\end{align}
as well as
\begin{align}\label{eq:topological}
\sup_{(x_s,y_s)\in  \supp\pi_{s}\cap \#_{\tau R_0}} \frac{|x_s-y_s|}{\tau R_0}\leq 1.
\end{align}
We recall that $\pi_s$ solves $OT_{\kappa^\frac{p-1} 2 \gamma^{\frac 1 2} \e,h_p}(\lambda_s,\mu_s)$. Note that $\kappa^\frac{p-1} 2 \gamma^{\frac 1 2}\e\simeq \e$.

Set $R_1 = \tau R_0$. 

\step 1 Treatment of the range $\max(C R_c,2R)\leq r\leq R_1$ by boundary regularity. 

In light of \eqref{eq:smallnessEnergy}, \eqref{eq:topological}, the fact that $\lambda_s(0)=\mu_s(0)=1$ and Assumption \ref{ass:tangency} holds in $Q_{\tau R}$, if $\e_2$ is sufficiently small, we are in a position to apply Corollary \ref{cor:oneStepRegularised}. We then find $C,C_2>0$, such that for $C R_c\leq r\leq R_1$, there exists $b_r\in \R^d$ and $A_r\in \R^{d\times d}$ symmetric with
\begin{align*}
\frac{|b_r|^2}{R_1^2}+|A_r-\textup{id}|^2\leq C_2 \e_2+\frac{C_2} C\\
\frac 1 {r^{d+2}}\int_{(Q_r(0)\times \R^d)\cup (\R^d \times Q_r(b_r))}|y-A_r x-b_r|^2 \dd \pi_{s}\leq C_2 \e_2+\frac{C_2 R_c^2}{r^2}\leq C_2 \e_2+\frac{C_2}{C}.
\end{align*}

\step 2 Treatment of the range $C R_c \leq r \leq 2R$ by interior regularity. In light of Step 1, we find $b\in \R^d$ and $A\in \R^{d\times d}$ symmetric such that
\begin{align*}
\frac 1 {(2R)^{d+2}}\int_{(Q_{2R}(0)\times \R^d)\cup (\R^d\times Q_{2R}(b))}|y-A x-b|^2 \dd \pi_{s}\leq C_2 \e_2+\frac{C_2} C\\
\frac{|b|^2}{R_1^2}+|A-\textup{id}|^2\leq C_2 \e_2+\frac{C_2} C.
\end{align*}
Note that with $z=Q_1(x_0)$, reducing $\tau$ if necessary, $(Q_{\tau R}(z)\times \R^d)\cup (\R^d\times Q_{\tau R}(z+b))\subset (Q_{2R}(0)\times \R^d)\cup (\R^d\times Q_{2R}(b))$ and
\begin{align*}
\frac 1 {(\tau R)^{d+2}}\int_{(Q_{\tau R}(z)\times \R^d)\cup (\R^d\times Q_{\tau R}(z+b))}|y-Ax-b|^2 \dd \pi_{s}\leq \tau^{-(d+2)} C_2 \e_2+\frac{C_2 }{ C\tau^{d+2}}.
\end{align*}

In order to apply the interior large-scale-regularity result \cite[Theorem 15]{GvalaniKoch1}, we now apply a further affine change of coordinates $s'$ to $\pi_s$, $\lambda_s$ and $\mu_s$, obtaining $\pi_{s'}$, $\lambda_{s'}$ and $\mu_{s'}$. Set
$s'= \left(A,b,\frac{\lambda_s(z)^\frac 1 d}{\mu_s(z)^\frac 1 d},\lambda_s(z)^{-1}\right)$. Then $\lambda_s(z)=\mu_s(z')=1$. Decreasing $\e_2$ and increasing $C$ if necessary, we can further ensure $\#_{\tau R/2}\subset Q'((Q_{\tau R}(z)\times \R^d)\cup (\R^d\times Q_{\tau R}(z+b)))$, where $Q'$ is the affine transformation associated to $s'$. Hence
\begin{align}\label{eq:interiorFinal}
\int_{\#_{\tau R}}|x-y|^2 \dd \pi_{s'}\lesssim \e_2+\frac 1 C.
\end{align}
Moreover, the $L^\infty$-bound Corollary \ref{cor:Linfty} applied to $\pi_s$ at scale $R$ gives
\begin{align*}
\sup_{(x_s,y_s)\in  \supp\pi_{s}\cap \#_{R/2}} \frac{|x_s-y_s|}{R}\lesssim_\tau \delta\left(\e_2,\frac 1 C\right),
\end{align*}
where $\delta\left(\e_2,\frac 1 C\right)\to 0$ as $\e_2,\frac 1 C \to 0$. Consequently, decreasing $\e_2$ and increasing $C$ further, if necessary, we are now able to apply \cite[Theorem 15]{GvalaniKoch1} to find $b'\in \R^d$ and $A'\in \R^{d\times d}$ symmetric such that
\begin{align*}
\frac 1 {(CR_c)^{d+2}}\int_{(Q_{C R_c}(0)\times \R^d)\cup (\R^d\times Q_{C R_c}(b'))}|y-A' x-b'|^2 \dd \pi_{s'}\lesssim \e_2+\frac{1} C\\
\frac{|b'|^2}{R_1^2}+|A'-\textup{id}|^2\lesssim \e_2+\frac{1} C.
\end{align*}

\step 3 Conclusion. Combining Step 1 and Step 2 with a suitably large choice of $C$, we find $R_{fin}=c R_c$ for some $c>0$ such that for some $b_{fin}\in \R^d$ and $A_{fin}\in \R^{d\times d}$ symmetric, and $\tilde s=s$ or $\tilde s=s'$,
\begin{align*}
\frac 1 {R_{fin}^{d+2}}\int_{(Q_{R_{fin}}(0)\times \R^d)\cup (\R^d\times Q_{R_{fin}}(b_{fin}))}|y-A_{fin}x-b_{fin}|^2 \dd \pi_{\tilde s}\lesssim \e_2\\
\frac{|b_{fin}|^2}{(CR_c)^2}+|A_{fin}-\textup{id}|^2\lesssim \e_2.
\end{align*}
Reducing $\e_2$ if necessary, we find,  arguing analogously as for \eqref{eq:interiorFinal},
\begin{align*}
\frac 1{ (\tau R_{fin})^{d+2}} \int_{\#_{\tau R_{fin}}} |y-x|^2 \dd \pi_{\hat s}\lesssim \e_2,    
\end{align*}
for some change of coordinates $\hat s=(\hat A,\hat b,\hat \gamma,\hat \kappa)$ with
\begin{align*}
    |\hat A|+|\hat A^{-1}|+|\hat \gamma|+|\hat \kappa|\lesssim 1.
\end{align*}
Let $\phi_{\e,\hat s}, \psi_{\e,\hat s}$ be the potentials for $\pi_{\hat s}$. 
Proposition \ref{prop:smallScale} shows that 
\begin{align*}
    \|\nabla^2 \phi_{\e,\hat s}\|_{L^\infty(B_{C R_c/3})}+\|\nabla^2 \psi_{\e,\hat s}\|_{L^\infty(B_{C R_c/3})}\lesssim 1.
\end{align*}

By Proposition \ref{lem:smallScaleGeometric}, there is $\tilde C>0$ such that for $x\in B_{C R_c/6}$, $y\in B_{C R_c/6}$,
\begin{align*}
B\left(\frac 1 2\nabla \phi_{\e,\hat s}(x), \tilde C^{-1} R_c\right)\cap \supp\,\mu_{\hat s}\subset \mathcal S_x\subset B\left(\frac 1 2\nabla \phi_{\e,\hat s}(x),\tilde C R_c\right).
\end{align*}

Returning to original variables and recalling that $p$ was arbitrary, this completes the proof.

\step 4. The case $p=1$. The case $p=1$ proceeds as the case $p>1$ with the exception of obtaining \eqref{eq:topological} at the initial scale $R_1$. In order to apply Proposition \ref{prop:largeScale} at scale $R_1$, it suffices to ensure
\begin{align}\label{eq:largeEntropic}
R_1^{-(d+2)}\int_{\#_{R_1}\cap \{|x-y|\geq R_1\}} |x-y|^2\dd \pi_s  \leq \e_1.
\end{align}

Denote by $T_s$ the optimal transport map between $\lambda_s$ and $\mu_s$. 
By Proposition \ref{prop:EnergyConvergence} and \eqref{lem:EnergyConvergence},
\begin{align*}
\int |x-T_s^{-1}(y)|^2+|y-T_s(x)|^2 \,d\pi_s\lesssim \e^2.
\end{align*}
For $(x,y)\in \#_{R_1}$ with $|x-y|\geq R_1$, by triangle inequality, we have if $x\in Q_{R_1}$,
\begin{align*}
|x-y|\leq |x-T_s(x)|+|y-T_s(x)|\leq c R_1^{1+\alpha}+|y-T_s(x)|\leq 2 |y-T_s(x)|.
\end{align*}
To obtain the last inequality, we used that $T_s(0)=0$ and $\nabla T_s(0)=\textup{id}$ in addition to the $C^{1,\alpha}$-regularity of $T_s$. If $y\in Q_{R_1}$, a similar estimate gives
\begin{align*}
|x-y|\leq |y-T_s^{-1}(y)|+|x-T_s^{-1}(y)|\leq  2 |x-T_s^{-1}(y)|.
\end{align*}
Consequently,
\begin{align*}
\int_{\#_{R_1}\cap \{|x-y|\geq R_1\}} |x-y|^2\dd \pi_s  \lesssim \int |x-T_s^{-1}(y)|^2+|y-T_s(x)|^2\dd \pi_s\lesssim \e^2.
\end{align*}
Reducing $\e$ if necessary, this gives \eqref{eq:largeEntropic}.
\end{proof}

We finally note that if $\pi$ is a minimiser of optimal transport, our theory applies with $\e=0$. In other words, we can iterate down to arbitrarily small scales.
\begin{proof}[Proof of Corollary \ref{cor:OT}]
Note that by the theory of Campanato spaces, Proposition \ref{prop:largeScale} and remark \eqref{CampanatoDecay} in the case $\e=0$ gives a $C^{1,\alpha}$-regularity statement. Hence Corollary \ref{cor:OT} is a straightforward consequence of applying the arguments of \cite[Section 6.3]{MiuraOtto} with Proposition \ref{prop:largeScale} replacing \cite[Proposition 2.5]{MiuraOtto}. We leave the details to the reader.
\end{proof}

\section{Acknowledgments:} L.K. was supported by the Engineering and Physical Sciences Research Council [APP65579: Regularity Properties For Penalised Optimal Transport ].

\bibliographystyle{amsalpha}

\end{document}